\title[Adjunction and Inversion of Adjunction]{Adjunction and Inversion of Adjunction}
\address{Osamu Fujino \\ Department of 
Mathematics, Graduate School of Science, 
Kyoto University, Kyoto 606-8502, Japan}
\email{fujino@math.kyoto-u.ac.jp}
\address{Kenta Hashizume \\ Graduate School of Mathematical Sciences, 
The University of Tokyo, 3-8-1 Komaba Meguro-ku Tokyo 153-8914, Japan}
\email{hkenta@ms.u-tokyo.ac.jp}
\subjclass[2010]{Primary 14E30; Secondary 14N30}
\keywords{adjunction, inversion of adjunction, log 
canonical centers, generalized pairs}
\DeclareMathOperator{\ddiv}{div}
\DeclareMathOperator{\Spec}{Spec}
\DeclareMathOperator{\Supp}{Supp}
\DeclareMathOperator{\Nqklt}{Nqklt}
\DeclareMathOperator{\Nqlc}{Nqlc}
\DeclareMathOperator{\Nlc}{Nlc}
\DeclareMathOperator{\NLC}{NLC}
\DeclareMathOperator{\mult}{mult}
\DeclareMathOperator{\Exc}{Exc}
\DeclareMathOperator{\Gal}{Gal}
\DeclareMathOperator{\rank}{rank}
\DeclareMathOperator{\coeff}{coeff}
\newtheorem{thm}{Theorem}[section]
\newtheorem{lem}[thm]{Lemma}
\newtheorem{conj}[thm]{Conjecture}
\newtheorem{cor}[thm]{Corollary}
\newtheorem*{claim-n}{Claim}
\theoremstyle{definition}
\newtheorem{step}{Step}
\newtheorem{defn}[thm]{Definition}
\newtheorem{rem}[thm]{Remark}
\newtheorem*{ack}{Acknowledgments}  
\begin{document}

\maketitle

\begin{abstract}
We establish adjunction and inversion 
of adjunction for log canonical centers of arbitrary 
codimension in full generality.  
\end{abstract}

\tableofcontents

\section{Introduction}\label{x-sec1}

Throughout this paper, we will work over $\mathbb C$, the complex 
number filed. 
We establish the following adjunction and inversion of adjunction 
for log canonical centers of arbitrary codimension. 

\begin{thm}\label{x-thm1.1}
Let $X$ be a normal variety and let $\Delta$ be an 
effective $\mathbb R$-divisor on $X$ such that 
$K_X+\Delta$ is $\mathbb R$-Cartier. 
Let $W$ be a log canonical center of $(X, \Delta)$ and 
let $\nu\colon Z\to W$ be the normalization of $W$. 
Then we have the adjunction formula 
$$
\nu^*(K_X+\Delta)=K_Z+B_Z+M_Z
$$ 
with the following properties: 
\begin{itemize}
\item[(A)] $(X, \Delta)$ is log canonical 
in a neighborhood of $W$ if and only if 
$(Z, B_Z+M_Z)$ is an NQC generalized 
log canonical pair, and 
\item[(B)] $(X, \Delta)$ is log canonical 
in a neighborhood of $W$ and 
$W$ is a minimal log canonical center of $(X, \Delta)$ 
if and only if $(Z, B_Z+M_Z)$ is an NQC generalized 
kawamata log terminal pair.  
\end{itemize}
\end{thm}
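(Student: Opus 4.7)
The plan is to reduce to the dlt case via a dlt modification, realize $Z$ as the target of an lc-trivial fibration from a suitable log canonical center of that dlt model, and descend $K_X+\Delta$ to $Z$ by the canonical bundle formula for generalized pairs. Shrinking $X$ around $W$ (for constructing $B_Z$ and $M_Z$ I may assume the ambient pair is lc), I would first take a dlt modification $f\colon (Y,\Delta_Y)\to (X,\Delta)$ and select a log canonical center $S$ of $(Y,\Delta_Y)$ of minimal dimension among those dominating $W$. Iterated classical dlt adjunction produces a dlt pair $(S,\Delta_S)$ with $(K_Y+\Delta_Y)|_S = K_S+\Delta_S$, and the Stein factorization of $S\to W$ yields an lc-trivial fibration $g\colon S\to Z$.

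I would next apply the canonical bundle formula for lc-trivial fibrations (Ambro, Fujino--Mori, together with its NQC generalized-pair refinements by Han--Liu--Shokurov, Hashizume, and Filipazzi) to $g\colon (S,\Delta_S)\to Z$, yielding
\[
K_S+\Delta_S\sim_{\mathbb{R}} g^*(K_Z+B_Z+M_Z),
\]
where $B_Z$ is the discriminant part and $M_Z$ is the moduli part. Independence from the chosen dlt model follows from the standard b-divisor formalism. For the forward direction of (A), NQC generalized lc-ness of $(Z,B_Z+M_Z)$ is built into the CBF construction; for the forward direction of (B), minimality of $W$ forces every lc place of $(Y,\Delta_Y)$ lying over $W$ to come from the fibration structure, so no coefficient of $B_Z$ equals $1$ and $(Z,B_Z+M_Z)$ is NQC generalized klt.

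For the reverse directions I would argue by contradiction. If $(Z,B_Z+M_Z)$ is NQC generalized lc but $(X,\Delta)$ is not lc near $W$, a non-lc place of $(X,\Delta)$ centered in $W$ would, after tracking through $f$ and inspecting the discriminant, force some coefficient of $B_Z$ to exceed $1$, contradicting generalized lc-ness. The klt converse is parallel: a coefficient $1$ in $B_Z$ yields an lc place over $W$ that strictly enlarges the minimal lc center through a general point of $W$.

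The main obstacle, I expect, is twofold. First, establishing the NQC property of $M_Z$ in the $\mathbb{R}$-coefficient setting is not formal and relies on the recent generalized canonical bundle formula together with finiteness results on b-nef contributions. Second, making the inversion step rigorous in full generality likely demands the quasi-log machinery of Ambro--Fujino on $W$, the negativity lemma, and existence of lc modifications for NQC generalized pairs; assembling these inputs so that non-lc (respectively, lc of discrepancy $-1$) places on $X$ transparently produce the claimed discriminant coefficients is the delicate technical core of the proof.
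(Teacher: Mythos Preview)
Your overall strategy matches the paper's in spirit, but there is a genuine gap in how you propose to construct $B_Z$ and $M_Z$. You write that for the construction you may shrink $X$ around $W$ and assume $(X,\Delta)$ is lc; but $W$ being an lc center only guarantees log canonicity at the \emph{generic} point of $W$, not on any open neighborhood of $W$. Your dlt modification and single irreducible center $S$ exist only over the open locus where $(X,\Delta)$ is lc, so the lc-trivial fibration $g\colon(S,\Delta_S)\to Z$ and the divisors $B_Z,M_Z$ it produces are defined only on $Z\setminus\nu^{-1}(W\cap\Nlc(X,\Delta))$. There is no mechanism in your outline for extending them across this locus in a way that records the non-lc information, and without that the contrapositive argument for (A) has nothing to detect: you cannot show that a non-lc place of $(X,\Delta)$ forces a coefficient of $B_Z$ to exceed $1$ if $B_Z$ has not even been defined over the image of that place.

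The paper avoids this by working globally from the start. Instead of a dlt model and a single normal $S$, it takes a log resolution $Y\to X$ and lets $V$ be the union of \emph{all} components of $\Delta_Y^{=1}$ dominant onto $W$; then $(V,\Delta_V)\to Z$ is a basic $\mathbb R$-slc-trivial fibration in the sense of Definition~\ref{x-def3.2} (a simple normal crossing, possibly reducible source, with $\Delta_V$ allowed to have coefficients exceeding $1$ over the non-lc locus), and the discriminant and moduli $\mathbb R$-b-divisors $\mathbf B,\mathbf M$ are defined on all of $Z$. The NQC property of $\mathbf M$ is the content of Theorem~\ref{x-thm5.1}, obtained by decomposing the $\mathbb R$-slc-trivial fibration into $\mathbb Q$-slc-trivial ones (Lemmas~\ref{x-lem5.4} and~\ref{x-lem5.5}); the references you cite treat normal sources and do not directly cover this snc situation. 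For inversion the paper does not argue by contradiction on $Z$ but uses the log canonical modification $g_{lc}\colon X_{lc}\to X$ of the pair $(X,\Delta)$ itself (Step~\ref{x-step1.3.6}): the relatively anti-ample divisor whose support is $\Nlc(X_{lc},\Delta_{X_{lc}})=g_{lc}^{-1}\Nlc(X,\Delta)$ restricts to the strict transform of $W$ and identifies $\Supp\mathbf B^{>1}$ with the preimage of $W\cap\Nlc(X,\Delta)$. Your proposed use of lc modifications for generalized pairs on $Z$ points in the wrong direction; the leverage comes from modifying $X$.
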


For the definition of NQC generalized 
log canonical pairs and NQC generalized 
kawamata log terminal pairs, see \cite[Section 2]{han-li}. 
 
For the formulation of adjunction and inversion of adjunction 
for log canonical centers of arbitrary codimension 
in full generality, the notion of b-divisors, which 
was first introduced by Shokurov, is very useful. 
In fact, the $\mathbb{R}$-divisors $B_{Z}$ and $M_{Z}$ in 
Theorem \ref{x-thm1.1} are the traces of certain 
$\mathbb{R}$-b-divisors $\mathbf B$ and $\mathbf M$ 
on $Z$, respectively. 
The precise version of Theorem \ref{x-thm1.1} is: 

\begin{thm}[Adjunction and Inversion of Adjunction]\label{x-thm1.2}
Let $X$ be a normal 
variety and let $\Delta$ be an effective $\mathbb R$-divisor 
on $X$ such that 
$K_X+\Delta$ is $\mathbb R$-Cartier. 
Let $W$ be a log canonical center of $(X, \Delta)$ 
and let $\nu\colon Z\to W$ be the normalization of $W$. 
Then there exist a b-potentially nef $\mathbb R$-b-divisor 
$\mathbf M$ and an $\mathbb R$-b-divisor $\mathbf B$ 
on $Z$ such that 
$\mathbf B_Z$ is effective with 
$$
\nu^*(K_X+\Delta)=\mathbf K_Z+\mathbf M_Z+\mathbf B_Z. 
$$
More precisely, there exists a projective birational morphism 
$p\colon Z'\to Z$ from a smooth quasi-projective variety 
$Z'$ such that 
\begin{itemize}
\item[(i)] $\mathbf M=\overline {\mathbf M_{Z'}}$ and 
$\mathbf M_{Z'}$ is a potentially nef $\mathbb R$-divisor on $Z'$, 
\item[(ii)] $\mathbf K+\mathbf B=\overline {\mathbf K_{Z'}+
\mathbf B_{Z'}}$, 
\item[(iii)] $\Supp \mathbf B_{Z'}$ is a simple 
normal crossing divisor on $Z'$, 
\item[(iv)] $\nu\circ p\left(\mathbf B^{>1}_{Z'}\right)=W\cap \Nlc(X, \Delta)$ 
holds set theoretically, where $\Nlc(X, \Delta)$ denotes the non-lc 
locus of $(X, \Delta)$, and 
\item[(v)] $\nu\circ p\left(\mathbf B^{\geq 1}_{Z'}\right)=W
\cap \left(\Nlc(X, \Delta)\cup \bigcup _{W\not\subset W^\dag}W^\dag
\right)$, where $W^\dag$ runs over log canonical centers of $(X, 
\Delta)$ which do not contain $W$, holds set theoretically.  
\end{itemize}
Hence, $(Z, \mathbf B_Z + \mathbf M_Z)$ is generalized log canonical, 
that is, $\mathbf B^{>1}_{Z'}=0$,  
if and only if $(X, \Delta)$ is log canonical 
in a neighborhood of $W$. 
Moreover, $(Z, \mathbf B_Z + \mathbf M_Z)$ is generalized 
kawamata log terminal, 
that is, $\mathbf B^{\geq 1}_{Z'}=0$, if and only if 
$(X, \Delta)$ is log canonical in a neighborhood of 
$W$ and $W$ is a minimal log canonical center of $(X, \Delta)$. 
We note that $\mathbf M_{Z'}$ is semi-ample when 
$\dim W=1$. We also  
note that if $K_X+\Delta$ is $\mathbb Q$-Cartier then 
$\mathbf B$ and $\mathbf M$ become 
$\mathbb Q$-b-divisors by construction. 
\end{thm}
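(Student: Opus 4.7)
The strategy is to reduce to Ambro's canonical bundle formula by passing to a log resolution of $(X, \Delta)$ that exposes the log canonical center $W$. I would first choose a projective birational morphism $f \colon Y \to X$ from a smooth quasi-projective variety $Y$ such that $f$ is a log resolution of $(X, \Delta)$ and, writing $K_Y + \Delta_Y = f^*(K_X+\Delta)$, there exists a prime divisor $S \subset Y$ with $\mathrm{coeff}_S \Delta_Y = 1$ and $f(S) = W$. The existence of such an $S$ is guaranteed by $W$ being a log canonical center. After further blow-ups one arranges the Stein factorization of $f|_S \colon S \to W$ to factor through the normalization $\nu$ as $S \xrightarrow{g} Z' \xrightarrow{p} Z \xrightarrow{\nu} W$, where $g$ has connected fibers, $Z'$ is smooth and quasi-projective, and $p$ is projective birational.

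Next, divisorial adjunction on the snc pair $(Y, \Delta_Y)$ yields $(K_Y+\Delta_Y)|_S = K_S+\Delta_S$ with $\Delta_S = (\Delta_Y-S)|_S$. Applying Ambro's canonical bundle formula (extended to $\mathbb{R}$-divisors by Fujino--Gongyo) to $g \colon S \to Z'$ produces a discriminant $\mathbf{B}_{Z'}$ and a moduli $\mathbf{M}_{Z'}$ with
\[
K_S+\Delta_S \sim_{\mathbb{R}} g^*(K_{Z'}+\mathbf{B}_{Z'}+\mathbf{M}_{Z'}).
\]
One then defines $\mathbf{M} := \overline{\mathbf{M}_{Z'}}$ and $\mathbf{K}+\mathbf{B} := \overline{\mathbf{K}_{Z'}+\mathbf{B}_{Z'}}$, which gives (i), (ii), (iii) by construction. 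Pushing the displayed identity down via $\nu \circ p \circ g = f|_S$ and using $g_*\mathcal{O}_S = \mathcal{O}_{Z'}$ together with $p_*\mathcal{O}_{Z'} = \mathcal{O}_Z$ yields the adjunction formula $\nu^*(K_X+\Delta) = \mathbf{K}_Z+\mathbf{B}_Z+\mathbf{M}_Z$.

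Properties (iv) and (v) follow by unwinding the definition of the discriminant: a prime divisor $P \subset Z'$ has $\mathrm{coeff}_P \mathbf{B}_{Z'} > 1$ (resp.\ $\geq 1$) exactly when some component of $\Delta_Y$ meeting $g^{-1}(P)$ has coefficient $> 1$ (resp.\ $\geq 1$), which corresponds to the image in $X$ lying in $\Nlc(X, \Delta)$ (resp.\ in $\Nlc(X, \Delta) \cup \bigcup_{W\not\subset W^{\dagger}} W^{\dagger}$). The generalized lc and generalized klt characterizations then fall out immediately, and the semi-ampleness of $\mathbf{M}_{Z'}$ when $\dim W = 1$ is Kawamata's classical semi-ampleness of the moduli part over a curve. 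The $\mathbb{Q}$-Cartier refinement follows since the discriminant and moduli constructions preserve rationality under rational input.

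The principal technical obstacle is establishing the b-potentially nef property of $\mathbf{M}$, which amounts to a semi-positivity statement for the moduli part in the setting of $\mathbb{R}$-divisor canonical bundle formulas, verified consistently across all higher birational models of $Z$. This rests on the Kawamata--Ambro--Fujino semi-positivity theorems together with a careful compatibility check to guarantee that the assignment $Z'' \mapsto \mathbf{M}_{Z''}$ descends to a well-defined b-divisor. A related subtlety is the effectivity of $\mathbf{B}_Z$: while $\mathbf{B}_{Z'}$ may a priori carry negative horizontal contributions coming from components of $\Delta_Y$ with coefficient $<1$, these are absorbed upon pushing forward to $Z$ because they sit over loci contracted by $\nu \circ p$.
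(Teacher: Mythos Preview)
There are two genuine gaps. First, reducing to a \emph{single} prime divisor $S$ over $W$ does not produce the correct discriminant b-divisor. The paper's $\mathbf B$ (see Definition~\ref{x-def1.3} and Remark~\ref{x-rem6.1}) is computed from the basic slc-trivial fibration $(V,\Delta_V)\to Z$ where $V$ is the \emph{union of all} components of $\Delta_Y^{=1}$ dominating $W$; equivalently, $\coeff_P\mathbf B_{\tilde Z}=1-\inf_T\alpha_{P,T}$ with $T$ ranging over all exceptional divisors of discrepancy $-1$ centered on $W$. Your single-$S$ discriminant realizes only one $\alpha_{P,S}$, hence can be strictly smaller, and then (iv), (v), and even the well-definedness of $\mathbf B$ as a b-divisor independent of the resolution are no longer automatic. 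Relatedly, the source of the fibration may be reducible, so the relevant canonical bundle formula is not Ambro's lc-trivial one but the basic \emph{slc}-trivial theory; the b-potential nefness of $\mathbf M$ in that setting for $\mathbb R$-coefficients is exactly the new Theorem~\ref{x-thm5.1} (Corollary~\ref{x-cor5.2}), not a consequence of Fujino--Gongyo.

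Second, and more seriously, your argument for (iv) misses the hard direction. Saying that $\coeff_P\mathbf B_{Z'}>1$ forces a coefficient $>1$ in $\Delta_Y$ near $g^{-1}(P)$ gives the inclusion $\nu\circ p(\mathbf B^{>1}_{Z'})\subset W\cap\Nlc(X,\Delta)$, but the reverse inclusion is inversion of adjunction and does \emph{not} follow from unwinding definitions: a priori $W\cap\Nlc(X,\Delta)$ could sit in codimension $\geq 2$ in $W$, while the discriminant only records divisorial information on $Z'$. The paper's Step~\ref{x-step1.3.6} resolves this by invoking the existence of a log canonical modification $g_{lc}\colon X_{lc}\to X$ (via a good dlt blow-up, \cite[Lemma 3.5, Theorem 1.3]{fujino-hashizume}): on $X_{lc}$ there is a divisor $D'$ with $-D'$ ample over $X$ and $\Supp D'=g_{lc}^{-1}\Nlc(X,\Delta)$, so restricting to the strict transform $\tilde W$ forces the non-lc locus to become divisorial on $\tilde Z$, yielding $\Supp\mathbf B^{>1}_{\tilde Z}=(g_{lc}\circ\tilde\nu)^{-1}(W\cap\Nlc(X,\Delta))$. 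This MMP input is essential and absent from your sketch. (Also, the semi-ampleness of $\mathbf M_{Z'}$ when $\dim W=1$ in the slc setting is \cite[Corollary 1.4]{fujino-fujisawa-liu}, not Kawamata's klt result.)
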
 

In this paper, the 
$\mathbb{R}$-b-divisors $\mathbf B$ 
and $\mathbf M$ in Theorem \ref{x-thm1.2} 
are defined by using the notion of basic 
$\mathbb{R}$-slc-trivial fibrations. 
Here, we explain an alternative definition of $\mathbf B$ 
and $\mathbf M$ for the reader's convenience. 
For the details of Definition \ref{x-def1.3}, 
see \cite[Section 5]{fujino-hashizume} 
and \cite[Definition 2.1]{fujino-hashizume-proc}. 

\begin{defn}[{see \cite[Section 5]{fujino-hashizume}, 
\cite[Definition 2.1]{fujino-hashizume-proc}, and 
Remark \ref{x-rem6.1}}]
\label{x-def1.3} 
Let $(X,\Delta)$, $W$, and $\nu \colon Z\to W$ be 
as in Theorem \ref{x-thm1.2}. 
For any higher birational model $\rho\colon \tilde{Z} \to Z$, 
we consider all prime divisors $T$ over $X$ such 
that $a(T,X,\Delta)=-1$ and the center of $T$ on $X$ is $W$. 
We take a log resolution $f\colon Y\to X$ of 
$(X,\Delta)$ so that $T$ is a prime divisor on $Y$ 
and the induced map $f_{T}\colon T\dashrightarrow \tilde{Z}$ is a morphism. 
We put $\Delta_{T}=(\Delta_{Y}-T)|_{T}$, 
where $\Delta_{Y}$ is defined by $K_{Y}+\Delta_{Y}=f^{*}(K_{X}+\Delta)$. 
For any prime divisor $P$ on $\tilde{Z}$, we define a real number 
$\alpha_{P,T}$ by
$$\alpha_{P,T}
=\sup\{\lambda \in \mathbb R\,|\,(T,\Delta_T+\lambda f^*_TP) 
\ \text{is sub log canonical over the generic point of}\ P\}.$$ 
Then the trace $\mathbf B_{\tilde{Z}}$ of 
$\mathbf B$ on $\tilde{Z}$ is defined by
$$\mathbf B_{\tilde{Z}}=\sum_{P}(1-\underset{T}\inf \alpha_{P,T})P$$
where $P$ runs over prime divisors on $\tilde Z$ and 
$T$ 
runs over prime divisors 
over $X$ such that $a(T,X,\Delta)=-1$ 
and the center of $T$ on $X$ is $W$. 
When $W$ is a prime divisor on $X$, 
$T$ is the strict transform of $W$ on $Y$. 
In this case, we can easily check that $\mathbf B_{\tilde {Z}}=
(f_T)_*\Delta_T$ holds. We consider the $\mathbb R$-line bundle 
$\mathcal L$ on $X$ associated to $K_X+\Delta$. 
We fix an $\mathbb R$-Cartier $\mathbb R$-divisor 
$D_{\tilde {Z}}$ on $\tilde Z$ whose associated 
$\mathbb R$-line bundle is $\rho^*\nu^*(\mathcal L|_W)$. 
Then the trace $\mathbf M_{\tilde{Z}}$ of 
$\mathbf M$ on $\tilde{Z}$ is defined by 
$$\mathbf M_{\tilde{Z}}=
D_{\tilde Z}-K_{\tilde{Z}}-\mathbf B_{\tilde{Z}}.
$$
We simply write 
$$
\rho^*\nu^*(K_X+\Delta)=K_{\tilde Z}+\mathbf B_{\tilde Z} 
+\mathbf M_{\tilde Z}
$$ 
if there is no danger of confusion (see also Remark \ref{x-rem6.1}). 
\end{defn}

As we saw in Definition \ref{x-def1.3}, 
the $\mathbb{R}$-b-divisor $\mathbf B$ on $Z$ 
depends only on the singularities 
of $(X,\Delta)$ near $W$. 
Conversely, Theorem \ref{x-thm1.2} (ii)--(v) implies 
that $\mathbf B$ remembers properties of the singularities 
of $(X,\Delta)$ near $W$. 
If we put $B_Z=\mathbf B_Z$ and $M_Z=\mathbf M_Z$, 
then 
Theorem \ref{x-thm1.1} directly follows from Theorem \ref{x-thm1.2}. 
Our new formulation of adjunction and inversion of adjunction 
includes some classical results as special cases. 
The following corollary is the 
case of ${\rm dim}W={\rm dim}X-1$ which 
recovers the classical adjunction and inversion of adjunction. 

\begin{cor}[Classical Adjunction and Inversion of Adjunction]
\label{x-cor1.4} 
In Theorem \ref{x-thm1.1}, 
we further assume that $\dim W=\dim X-1$, that is, 
$W$ is a prime divisor on $X$. 
Then $M_Z$ and $B_Z$ become zero and 
Shokurov's different, respectively.  
Then (A) recovers Kawakita's inversion of adjunction 
on log canonicity. By (B), we have that 
$(X, \Delta)$ is purely log terminal in a neighborhood of 
$W$ if and only if $(Z, B_Z)$ is kawamata log terminal. 
\end{cor}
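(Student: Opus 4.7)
The plan is to specialize Theorem \ref{x-thm1.1}, via its b-divisor refinement Theorem \ref{x-thm1.2}, to the divisorial case and explicitly identify $\mathbf B$ and $\mathbf M$ with the classical data on $Z$. I would first prove $\mathbf M = 0$. When $W$ is a prime divisor, Definition \ref{x-def1.3} records that the only prime divisor $T$ over $X$ with $a(T,X,\Delta)=-1$ and center $W$ is the strict transform of $W$ on a log resolution $f\colon Y\to X$, and that $\mathbf B_{\tilde Z}=(f_T)_*\Delta_T$ on any higher model $\rho\colon \tilde Z\to Z$. Choose $\tilde Z=Z'$ so that $f_T\colon T\to Z'$ is a morphism. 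Classical divisorial adjunction on $Y$ gives $K_T+\Delta_T=(f|_T)^*(K_X+\Delta)=f_T^*\rho^*\nu^*(K_X+\Delta)$. Pushing forward by the birational morphism $f_T$ and using $(f_T)_*K_T=K_{Z'}$ yields $(f_T)_*\Delta_T=\rho^*\nu^*(K_X+\Delta)-K_{Z'}=\mathbf B_{Z'}+\mathbf M_{Z'}$. Since the left-hand side already equals $\mathbf B_{Z'}$, we conclude $\mathbf M_{Z'}=0$, and running over higher models shows $\mathbf M=0$, so $M_Z=0$.

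With $\mathbf M = 0$, the equality $\nu^*(K_X+\Delta)=K_Z+\mathbf B_Z$ combined with the effectivity of $\mathbf B_Z$ from Theorem \ref{x-thm1.2} identifies $\mathbf B_Z$ with Shokurov's different $\mathrm{Diff}_Z(\Delta-W)$. Substituting these identifications into Theorem \ref{x-thm1.1}(A) gives: $(X,\Delta)$ is lc in a neighborhood of $W$ if and only if $(Z,\mathrm{Diff}_Z(\Delta-W))$ is lc, which is precisely Kawakita's inversion of adjunction on log canonicity.

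For the plt assertion, Theorem \ref{x-thm1.1}(B) specializes to: $(X,\Delta)$ is lc near $W$ and $W$ is a minimal lc center of $(X,\Delta)$ if and only if $(Z,\mathrm{Diff}_Z(\Delta-W))$ is klt. It remains to show that the left-hand condition is equivalent to $(X,\Delta)$ being plt near $W$. If $(X,\Delta)$ is plt near $W$, then every exceptional divisor has discrepancy $>-1$; since the center of an exceptional divisor over $X$ has codimension $\geq 2$ in $X$, the only lc center meeting a neighborhood of $W$ is $W$ itself, so $W$ is minimal. Conversely, if $(X,\Delta)$ is lc near $W$ with $W$ minimal, then any other lc center $W'$ meeting $W$ would force $W\cap W'\subsetneq W$ to contain an lc center (by the standard fact that intersections of lc centers are unions of lc centers), contradicting minimality; hence $W$ is the unique lc center near $W$, and by the codimension constraint on exceptional centers, no exceptional divisor over a neighborhood of $W$ has discrepancy $-1$, so $(X,\Delta)$ is plt near $W$.

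The main obstacle is organizational rather than computational: one must carefully transfer the intrinsic description of $\mathbf B$ from Definition \ref{x-def1.3} into the classical Shokurov different, and precisely match minimality of a codimension-one lc center with the plt property, keeping all statements local around $W$. Both steps are standard but require attention to the fact that Theorem \ref{x-thm1.2} provides $\mathbf B_Z$ effective and that exceptional divisorial valuations never have codimension-one centers.
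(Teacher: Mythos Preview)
Your proposal is correct and follows essentially the same approach as the paper's proof: you invoke Definition \ref{x-def1.3} to identify $\mathbf B_{Z'}=(f_T)_*\Delta_T$ in the divisorial case, deduce $\mathbf M=0$ and hence $B_Z=\mathrm{Diff}_Z(\Delta-W)$, and then read off Kawakita's statement from (A) and the plt characterization from (B). The paper does the same, only more tersely; your added detail on the equivalence ``plt near $W$'' $\Leftrightarrow$ ``lc near $W$ with $W$ minimal'' (via the fact that an intersection of lc centers contains an lc center) is exactly what the paper's one-line ``Note that\ldots'' is pointing to.
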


We know that we have already had many related results. 
We only make some remarks on \cite{hacon} and 
\cite{filipazzi}. 

\begin{rem}[Hacon's inversion of adjunction]\label{x-rem1.5}
In \cite[Theorem 1]{hacon}, Hacon treated inversion of adjunction 
on log canonicity for log canonical centers of arbitrary codimension 
under the extra assumption 
that $\Delta$ is a boundary $\mathbb Q$-divisor. 
We note that the b-divisor 
$\mathbf B(V; X, \Delta)$ in \cite{hacon} 
coincides with $\mathbf B$ in Theorem \ref{x-thm1.2} 
by \cite[Theorem 1.2]{fujino-hashizume-proc}. 
In \cite[Theorem 5.4]{fujino-hashizume}, we proved a generalization of 
\cite[Theorem 1]{hacon}. We note that 
$\mathbf B$ in \cite[Theorem 5.4]{fujino-hashizume} 
coincides with $\mathbf B$ in Theorem \ref{x-thm1.2}. Hence 
Theorem \ref{x-thm1.2} can be seen as a complete generalization of 
\cite[Theorem 5.4]{fujino-hashizume} and \cite[Theorem 1]{hacon}. 
\end{rem}

\begin{rem}[Generalized adjunction and 
inversion of adjunction by Filipazzi]\label{x-rem1.6} 
In \cite{filipazzi}, Filipazzi established some 
related results for 
generalized pairs (see, for example, \cite[Theorem 1.6]
{filipazzi}). 
Although they are more general than Theorems 
\ref{x-thm1.1} and \ref{x-thm1.2} in some sense, 
they do not include Theorem \ref{x-thm1.1}. 
\end{rem}

The main ingredients of Theorem \ref{x-thm1.2} are 
the existence theorem of log canonical modifications 
established in \cite{fujino-hashizume} and 
the theory of basic slc-trivial fibrations in \cite{fujino-slc-trivial} 
and \cite{fujino}. Hence this paper can 
be seen as a continuation of \cite{fujino} and 
\cite{fujino-hashizume}. 
Moreover, the theory of partial resolutions of singularities 
of pairs in \cite{bvp} is indispensable. 
We do not use Kawakita's inversion of adjunction (see \cite[Theorem]{kawakita}) 
nor 
the Kawamata--Viehweg vanishing theorem. 
If $K_X+\Delta$ is $\mathbb Q$-Cartier, then Theorem \ref{x-thm1.2} 
easily follows from \cite{fujino-slc-trivial}, \cite{fujino}, 
and \cite{fujino-hashizume}. 
Unfortunately, however, the framework of basic slc-trivial fibrations 
discussed in \cite{fujino-slc-trivial} is not sufficient 
for our purposes in this paper. 
Hence we establish the following result. 

\begin{thm}[Corollary \ref{x-cor5.2}]\label{x-thm1.7}
Let $f\colon (X, B)\to Y$ be a basic 
$\mathbb R$-slc-trivial fibration and 
let $\mathbf B$ and $\mathbf M$ be the 
discriminant and moduli $\mathbb R$-b-divisors associated 
to $f\colon (X, B)\to Y$, respectively. 
Then we have the following properties: 
\begin{itemize}
\item[(i)] $\mathbf K+\mathbf B$ is $\mathbb R$-b-Cartier, 
where $\mathbf K$ is the canonical 
b-divisor of $Y$, and 
\item[(ii)] $\mathbf M$ is b-potentially nef, that is,  
there exists a proper birational morphism $\sigma\colon Y'\to Y$ 
from a normal variety $Y'$ such that 
$\mathbf M_{Y'}$ is a potentially nef $\mathbb R$-divisor on $Y'$ and 
that $\mathbf M=\overline{\mathbf M_{Y'}}$ holds. 
\end{itemize}
\end{thm}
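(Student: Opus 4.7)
The plan is to reduce Theorem~\ref{x-thm1.7} to the $\mathbb{Q}$-coefficient case, where the analogous statements are already contained in \cite{fujino-slc-trivial} and \cite{fujino}. Recall that in the $\mathbb{Q}$-case, $\mathbf K+\mathbf B$ is $\mathbb Q$-b-Cartier and $\mathbf M$ is b-semi-ample, which is strictly stronger than (i) and (ii). So the real content of Theorem~\ref{x-thm1.7} is an $\mathbb R$-coefficient extension, and the natural approach is to decompose the $\mathbb R$-slc-trivial fibration into finitely many $\mathbb Q$-slc-trivial fibrations and pass to a convex combination.

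The first step is the decomposition. I would produce $\mathbb Q$-divisors $B_{1},\dots,B_{k}$ on $X$, positive real numbers $r_{1},\dots,r_{k}$ with $\sum_{i}r_{i}=1$, and $\mathbb Q$-Cartier $\mathbb Q$-divisors $L_{1},\dots,L_{k}$ on $Y$ such that $B=\sum_{i}r_{i}B_{i}$, $L=\sum_{i}r_{i}L_{i}$, and for each $i$ the morphism $f\colon(X,B_{i})\to Y$ is a basic $\mathbb Q$-slc-trivial fibration with $K_{X}+B_{i}\sim_{\mathbb Q}f^{*}L_{i}$. The existence of such a decomposition is a standard Shokurov-polytope argument: the locus of $\mathbb R$-divisors $B'$ for which $(X,B')$ has the requisite slc property on the generic fiber and $K_{X}+B'$ is numerically trivial over $Y$ restricted to the generic fiber forms a rational polytope in a finite-dimensional affine space, and the $\mathbb R$-linear equivalence $K_{X}+B\sim_{\mathbb R}f^{*}L$ can be written using only finitely many principal divisors, so $B$ lies in a rational polytope of admissible $B'$.

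The second step is to track the invariants. By the definition of the discriminant $\mathbb R$-b-divisor via log canonical thresholds on a common log resolution, linearity in the boundary gives $\mathbf B=\sum_{i}r_{i}\mathbf B^{(i)}$, and the defining formula for the moduli part then gives $\mathbf M=\sum_{i}r_{i}\mathbf M^{(i)}$, where $\mathbf B^{(i)}$ and $\mathbf M^{(i)}$ are associated to $f\colon(X,B_{i})\to Y$. By \cite{fujino-slc-trivial} and \cite{fujino}, there exists a single projective birational morphism $\sigma\colon Y'\to Y$ from a smooth variety on which every $\mathbf K+\mathbf B^{(i)}$ descends to a $\mathbb Q$-Cartier divisor and every $\mathbf M^{(i)}_{Y'}$ is a nef (indeed semi-ample) $\mathbb Q$-divisor with $\mathbf M^{(i)}=\overline{\mathbf M^{(i)}_{Y'}}$. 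Taking the $\mathbb R$-linear combination $\sum_{i}r_{i}$ of the first family gives (i), and of the second family gives a potentially nef $\mathbb R$-divisor $\mathbf M_{Y'}$ with $\mathbf M=\overline{\mathbf M_{Y'}}$, which is (ii).

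The main obstacle is the decomposition step. The slc condition on generic fibers and the $\mathbb R$-Cartier property of $K_{X}+B$ are preserved under small perturbations of coefficients, but one must simultaneously realize the descended $\mathbb R$-Cartier class $L$ on $Y$ as an $\mathbb R$-linear combination of $\mathbb Q$-Cartier classes whose pullbacks match $K_{X}+B_{i}$ up to $\mathbb Q$-linear equivalence. Since $Y$ is only normal (not necessarily $\mathbb Q$-factorial) and since $\sim_{\mathbb R}$ mixes $\mathbb R$-coefficients with rational functions, this compatibility must be arranged inside a common finite-dimensional space of $\mathbb R$-Cartier classes simultaneously on $X$ and on $Y$. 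Once that rational polytope is set up, Caratheodory and rationality of the vertices finish the job, but writing down the correct polytope and verifying that every vertex of a neighborhood of $B$ still gives a basic $\mathbb Q$-slc-trivial fibration is the delicate technical point.
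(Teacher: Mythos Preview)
The central gap is your claim that ``linearity in the boundary gives $\mathbf B=\sum_i r_i\mathbf B^{(i)}$.'' The discriminant coefficient at a prime $P$ is $1-b_P$ where $b_P$ is a log canonical threshold; on a log resolution this is a minimum of affine functions of the coefficients of $B$, hence only concave, not linear. Thus for a convex combination $B=\sum_i r_i B_i$ one obtains in general only an inequality between $\mathbf B_{Y'}$ and $\sum_i r_i\mathbf B^{(i)}_{Y'}$, with equality precisely when the same prime divisor computes the minimum for all the $B_i$ simultaneously. The paper's Lemma~\ref{x-lem5.5} addresses exactly this: starting from the initial decomposition (your first step, which is Lemma~\ref{x-lem5.4}), one passes to a smaller rational polytope $\mathcal C'\subset\mathcal C$ containing the point corresponding to $B$, cut out by half-spaces $H^{(P_i)}_{\leq 0}$ that force a fixed component $P_{(B,Q)}$ to realize the minimum for every relevant $Q$ on the base. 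On $\mathcal C'$ the discriminant becomes genuinely affine, and rational points $\Delta_1,\dots,\Delta_l$ of $\mathcal C'$ give the refined decomposition you need.

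A second subtlety is that this refined polytope depends on the chosen birational model $Y'$, so the $\Delta_j$ may change as $Y'$ varies; knowing $\mathbf B_{Y'}=\sum_j s_j\mathbf B^{\Delta_j}_{Y'}$ on one model does not immediately give the equality of b-divisors. To conclude, the paper first proves Theorem~\ref{x-thm4.1}, a sharpened $\mathbb Q$-case asserting that under a simple normal crossing hypothesis on $Y$ the descent already happens on $Y$ itself (not merely on some unspecified higher model). Then, for an arbitrary $\sigma\colon Y'\to Y$, one applies Lemma~\ref{x-lem5.5} on $Y'$, pushes the resulting identity down to $Y$, invokes Theorem~\ref{x-thm4.1} for each rational $\Delta_j$ to obtain $\sigma^*(K_Y+\mathbf B^{\Delta_j}_Y)=K_{Y'}+\mathbf B^{\Delta_j}_{Y'}$, and sums. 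Your sketch bypasses both of these ingredients, and as written the argument does not close. (A minor correction: in the $\mathbb Q$-case $\mathbf M$ is only known to be b-potentially nef, not b-semi-ample; see Theorem~\ref{x-thm3.6} and Conjecture~\ref{x-conj1.9}.)
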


If $f\colon (X, B)\to Y$ is a basic $\mathbb Q$-slc-trivial 
fibration, then Theorem \ref{x-thm1.7} is nothing but 
\cite[Theorem 1.2]{fujino-slc-trivial}, which is the main result of 
\cite{fujino-slc-trivial}. 
More precisely, we establish: 
 
\begin{thm}[see Theorem \ref{x-thm5.1}]\label{x-thm1.8}
Let $f\colon (X, B)\to Y$ be a projective surjective morphism 
from a simple normal crossing pair $(X, B)$ to a 
smooth 
quasi-projective variety $Y$ such that 
every stratum of $X$ is dominant onto $Y$ and $f_*\mathcal O_X\simeq 
\mathcal O_Y$ with 
\begin{itemize}
\item $B=B^{\leq 1}$ holds over the generic point of $Y$, 
\item there exists an $\mathbb R$-Cartier $\mathbb R$-divisor 
$D$ on $Y$ such that $K_Y+B\sim _{\mathbb R}f^*D$ holds, and 
\item $\rank f_*\mathcal O_X(\lceil -(B^{<1})\rceil)=1$. 
\end{itemize} 
We assume that there exists a simple normal crossing 
divisor $\Sigma$ on $Y$ such that $\Supp D\subset \Sigma$ and 
that every stratum of $(X, \Supp B)$ is smooth over $Y\setminus \Sigma$. 
Let $\mathbf B$ and $\mathbf M$ be the discriminant and 
moduli $\mathbb R$-b-divisors associated to 
$f\colon (X, B)\to Y$, respectively. 
Then 
\begin{itemize}
\item[(i)] $\mathbf K+\mathbf B=
\overline {\mathbf K_Y+\mathbf B_Y}$ holds, 
where 
$\mathbf K$ is the canonical b-divisor of $Y$, and 
\item[(ii)] $\mathbf M_Y$ is a potentially 
nef $\mathbb R$-divisor on $Y$ with 
$\mathbf M=\overline {\mathbf M_Y}$. 
\end{itemize}
\end{thm}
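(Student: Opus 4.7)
The plan is to reduce Theorem \ref{x-thm1.8} to its $\mathbb{Q}$-slc-trivial counterpart \cite[Theorem 1.2]{fujino-slc-trivial} by writing $B$ as a convex combination of $\mathbb{Q}$-boundaries to which that result applies. Since $K_X+B\sim_{\mathbb{R}} f^{*}D$, I fix a relation $K_X+B-f^{*}D=\sum_{j=1}^m r_j\,\ddiv(\phi_j)$ with $r_j\in\mathbb{R}$ and $\phi_j$ rational functions on $X$. Inside the finite-dimensional $\mathbb{R}$-vector space of triples $(B',D',\mathbf{r}')$ with $\Supp B'\subset\Supp B$, $\Supp D'\subset\Sigma$, and $\mathbf{r}'\in\mathbb{R}^m$, I cut out by the rational affine conditions $K_X+B'-f^{*}D'=\sum_j r'_j\,\ddiv(\phi_j)$, $B'\geq 0$, and $B'=(B')^{\leq 1}$ over the generic point of $Y$. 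This yields a rational polytope $\mathcal{P}$ containing $(B,D,\mathbf{r})$; the remaining hypotheses (the rank condition and stratum smoothness over $Y\setminus\Sigma$) are open conditions in $\mathcal{P}$ that persist after shrinking to a neighborhood of $(B,D,\mathbf{r})$. Hence I can write $B=\sum_i\lambda_i B_i$ with $\lambda_i>0$, $\sum_i\lambda_i=1$, each $B_i$ a $\mathbb{Q}$-divisor, and each $f\colon(X,B_i)\to Y$ a basic $\mathbb{Q}$-slc-trivial fibration verifying the hypotheses of Theorem \ref{x-thm1.8}.

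For each $i$, let $\mathbf{B}^{(i)}$ and $\mathbf{M}^{(i)}$ be the associated discriminant and moduli $\mathbb{Q}$-b-divisors. By \cite[Theorem 1.2]{fujino-slc-trivial}, $\mathbf{K}+\mathbf{B}^{(i)}=\overline{\mathbf{K}_Y+\mathbf{B}^{(i)}_Y}$ and $\mathbf{M}^{(i)}_Y$ is a nef $\mathbb{Q}$-Cartier divisor with $\mathbf{M}^{(i)}=\overline{\mathbf{M}^{(i)}_Y}$. The core step is to prove the linearity identities
$$
\mathbf{B}_Y=\sum_i\lambda_i\,\mathbf{B}^{(i)}_Y \qquad \text{and} \qquad \mathbf{M}_Y=\sum_i\lambda_i\,\mathbf{M}^{(i)}_Y.
$$
Once these are in place, assertion (i) follows from $\mathbf{K}+\mathbf{B}=\sum_i\lambda_i\,\overline{\mathbf{K}_Y+\mathbf{B}^{(i)}_Y}=\overline{\mathbf{K}_Y+\mathbf{B}_Y}$, and assertion (ii) follows because $\mathbf{M}_Y=\sum_i\lambda_i\,\mathbf{M}^{(i)}_Y$ is a non-negative $\mathbb{R}$-linear combination of nef $\mathbb{Q}$-Cartier divisors on $Y$, hence potentially nef, and $\mathbf{M}=\sum_i\lambda_i\,\overline{\mathbf{M}^{(i)}_Y}=\overline{\mathbf{M}_Y}$.

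The hard part is the linearity of $\mathbf{B}_Y$. Under the SNC and stratum smoothness hypotheses, the coefficient of a prime $P\subset\Sigma$ in $\mathbf{B}_Y$ equals $\max_E\frac{\mult_E B+m_E-1}{m_E}$, where $E$ ranges over prime divisors on $X$ with $f(E)=P$ and $m_E=\mult_E f^{*}P$. This is a maximum of affine-linear functions of the coefficients of $B$, so a convex decomposition does not distribute across it automatically. I would handle this by subdividing $\mathcal{P}$ further by the finitely many rational hyperplanes $\{(b_E+m_E-1)/m_E=(b_{E'}+m_{E'}-1)/m_{E'}\}$ for pairs $(E,E')$ lying over each $P$, and then choosing the decomposition of $B$ inside the single closed subpolytope containing it. On that subpolytope a fixed maximizer $E=E(P)$ works uniformly, so the identity for $\mathbf{B}_Y$ becomes an affine tautology in the coefficients of $B$. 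The linearity of $\mathbf{M}_Y$ then follows from the relations $K_X+B_i=f^{*}(\mathbf{K}_Y+\mathbf{B}^{(i)}_Y+\mathbf{M}^{(i)}_Y)$ modulo $\ddiv(\phi_j)$, combined with the corresponding relation for $B$, closing the reduction.
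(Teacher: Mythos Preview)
Your overall architecture---write $B$ as a rational convex combination $\sum_i\lambda_iB_i$, verify that the discriminant decomposes linearly by subdividing a rational polytope along the hyperplanes where the maximizing component $E$ over each prime $P$ changes, and then import the $\mathbb Q$-case for each $B_i$---is exactly the paper's strategy (your polytope subdivision is Lemma~\ref{x-lem5.5}, and the preliminary decomposition is Lemma~\ref{x-lem5.4}). But there are two genuine gaps.

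\textbf{The $\mathbb Q$-input is misidentified.} You invoke \cite[Theorem~1.2]{fujino-slc-trivial} to conclude that $\mathbf K+\mathbf B^{(i)}=\overline{\mathbf K_Y+\mathbf B^{(i)}_Y}$ and that $\mathbf M^{(i)}_Y$ is nef on $Y$ with $\mathbf M^{(i)}=\overline{\mathbf M^{(i)}_Y}$. That theorem only says $\mathbf M^{(i)}$ is b-potentially nef: it descends to \emph{some} higher model $Y'_i$, not to $Y$ itself. Getting descent already on $Y$, under the hypotheses that $\Supp D_i\subset\Sigma$ and the strata of $(X,\Supp B_i)$ are smooth over $Y\setminus\Sigma$, is precisely the content of Theorem~\ref{x-thm4.1}, and it is \emph{not} a formal consequence of \cite[Theorem~1.2]{fujino-slc-trivial}. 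The paper proves it by Lemma~\ref{x-lem4.4}, which shows that the $b$-fold cyclic cover used in \cite[Section~6]{fujino-slc-trivial} can be resolved functorially so that every stratum stays smooth over $Y$; without this one is forced to blow up $Y$. This is the main technical ingredient you are missing.

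\textbf{Linearity on $Y$ does not give the b-divisor identity.} You establish $\mathbf B_Y=\sum_i\lambda_i\mathbf B^{(i)}_Y$ and then write $\mathbf K+\mathbf B=\sum_i\lambda_i\,\overline{\mathbf K_Y+\mathbf B^{(i)}_Y}$, but the left-hand side requires $\mathbf B=\sum_i\lambda_i\mathbf B^{(i)}$ as b-divisors, i.e.\ $\mathbf B_{Y'}=\sum_i\lambda_i\mathbf B^{(i)}_{Y'}$ for every $Y'\to Y$. Convexity only gives $\mathbf B_{Y'}\le\sum_i\lambda_i\mathbf B^{(i)}_{Y'}$, with the difference effective and $\sigma$-exceptional; there is no obvious negativity argument forcing it to vanish. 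The paper avoids this by fixing an arbitrary $\sigma\colon Y'\to Y$, performing your polytope subdivision on the \emph{induced} fibration over $Y'$ (so the decomposition $\{\Delta_j\}$ may depend on $Y'$), pushing down to get linearity on $Y$ as well, and then using Theorem~\ref{x-thm4.1} for each $\mu_*\Delta_j$ to pull back. In other words, the order of operations matters: subdivide upstairs, then descend---not the reverse.
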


Note that Theorem \ref{x-thm1.8} completely 
generalizes \cite[Lemma 2.8]{hu}. 
By Theorem \ref{x-thm1.8}, we can use the 
framework of basic slc-trivial fibrations 
in \cite{fujino-slc-trivial} 
for $\mathbb R$-divisors.  
We also note that the main part of this paper is devoted to 
the proof of Theorem \ref{x-thm1.8}. 
In the proof of Theorem \ref{x-thm1.2}, 
we naturally construct a basic $\mathbb R$-slc-trivial 
fibration $f\colon (V, \Delta_V)\to Z$ by taking a 
suitable resolution of singularities 
of the pair $(X, \Delta)$. 
The $\mathbb R$-b-divisors $\mathbf B$ and $\mathbf M$ on $Z$ 
in Theorem \ref{x-thm1.2} are the discriminant and 
moduli $\mathbb R$-b-divisors associated to $f\colon (V, \Delta_V)\to Z$, 
respectively.

\begin{conj}\label{x-conj1.9}
In Theorem \ref{x-thm1.8}, $\mathbf M_Y$ is semi-ample. 
\end{conj}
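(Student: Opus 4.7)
The plan is to bootstrap the b-potential nefness of $\mathbf{M}$ descended to $Y$, established in Theorem \ref{x-thm1.8}, up to semi-ampleness of $\mathbf{M}_Y$. My overall strategy has two stages: first, a Diophantine-approximation reduction of the $\mathbb{R}$-coefficient problem to a finite collection of basic $\mathbb{Q}$-slc-trivial fibrations obtained by perturbing the coefficients of $B$; second, an appeal to the $\mathbb{Q}$-version of the statement, which is essentially the Prokhorov--Shokurov b-semi-ampleness conjecture generalized to the slc setting.

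For the first stage, I would work on a common log resolution and write $B = \sum t_i B^{(i)}$ as an $\mathbb{R}_{>0}$-convex combination of effective $\mathbb{Q}$-divisors $B^{(i)}$ with the same support as $B$ and with rational coefficients sufficiently close to those of $B$. The perturbations should be chosen inside the rational polytope of coefficient vectors for which $(X, B^{(i)}) \to Y$ remains a basic $\mathbb{Q}$-slc-trivial fibration satisfying the hypotheses of Theorem \ref{x-thm1.8}---in particular the rank-one condition on the pushforward. Because the construction of the discriminant $\mathbf{B}$ and the associated moduli class depends in a piecewise-$\mathbb{R}$-linear fashion on the coefficient vector, a convex combination of semi-ample $\mathbb{Q}$-Cartier traces $\mathbf{M}^{(i)}_Y$ would yield semi-ampleness of $\mathbf{M}_Y$.

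The second stage---semi-ampleness of the moduli $\mathbb{Q}$-b-divisor in a basic $\mathbb{Q}$-slc-trivial fibration---is the open Prokhorov--Shokurov conjecture in the slc setting, and this is where I expect the main obstacle. The classical attack constructs the moduli part via the variation of Hodge structure on the cohomology of a smooth model of the general fiber, extracts nefness from Griffiths positivity, and then tries to upgrade nefness to semi-ampleness. This step is known unconditionally only in very restricted settings---most notably relative dimension one, which already yields the $\dim W = 1$ assertion appearing in Theorem \ref{x-thm1.2}, and certain isotrivial or abelian-fiber situations. A full resolution in higher relative dimension appears to demand substantial new geometric input, for instance a lift to a polarized moduli morphism over a suitable finite cover of $Y$, or an abundance-style theorem on a projective compactification of $Y$ adapted to the induced log structure. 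Absent such input, I would expect this approach to yield only partial progress: unconditional semi-ampleness in relative dimension one, and statements conditional on Prokhorov--Shokurov in higher relative dimension.
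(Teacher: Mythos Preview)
The statement you are attempting is Conjecture \ref{x-conj1.9}, which the paper explicitly does \emph{not} prove: immediately after stating it the authors write that it ``is still widely open'' and that it would follow from the (open) b-semi-ampleness conjecture \cite[Conjecture 1.4]{fujino-slc-trivial}. There is therefore no proof in the paper to compare against, and your own assessment that the argument stalls at the Prokhorov--Shokurov step is accurate.

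Your Stage~1 reduction is correct and is exactly what the paper carries out in Lemmas \ref{x-lem5.4}, \ref{x-lem5.5} and Theorem \ref{x-thm5.6}: one writes $B$ as an $\mathbb R_{>0}$-convex combination of $\mathbb Q$-divisors $B_j$ so that each $f\colon (X,B_j)\to Y$ is a basic $\mathbb Q$-slc-trivial fibration and $\mathbf M=\sum_j r_j\mathbf M_j$ as b-divisors. Semi-ampleness of each $\mathbf M_{jY}$ would then give semi-ampleness of $\mathbf M_Y$. Your Stage~2 is precisely the open $\mathbb Q$-case, and you correctly identify it as the obstacle.

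One point needs correction: you conflate two distinct partial cases. The assertion ``$\mathbf M_{Z'}$ is semi-ample when $\dim W=1$'' in Theorem \ref{x-thm1.2} is the case where the \emph{base} is a curve ($\dim Y=1$), handled via \cite[Corollary 1.4]{fujino-fujisawa-liu} (see Remark \ref{x-rem5.3}); it has nothing to do with relative dimension one. The paper does separately treat relative dimension one in Theorem \ref{x-thm7.2}, but this is \emph{not} unconditional: it requires the horizontal part $B^h$ to be effective, and the proof splits into a non-klt-fiber case (reduced via Theorem \ref{x-thm7.1} to $\mathbf M\sim_{\mathbb Q}0$) and a klt-fiber case (handled by \cite[Theorem 8.1]{ps}). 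So ``unconditional semi-ampleness in relative dimension one'' overstates what is known.
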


If Conjecture \ref{x-conj1.9} holds true, then $\mathbf M$ in 
Theorem \ref{x-thm1.2} is b-semi-ample, that is, 
$\mathbf M_{Z'}$ is semi-ample. 
Note that Conjecture \ref{x-conj1.9} follows from \cite[Conjecture 1.4]
{fujino-slc-trivial}. 
When $\dim Y=1$, we can easily check that 
$\mathbf M_Y$ is semi-ample by 
\cite[Corollary 1.4]{fujino-fujisawa-liu}. 
Unfortunately, however, it is still widely open. 
In this paper, we 
prove Conjecture \ref{x-conj1.9} for basic slc-trivial fibrations 
of relative dimension one under some extra assumption 
(see Theorem \ref{x-thm7.2}). 
Then we establish: 

\begin{thm}[see Corollary \ref{x-cor7.3}]\label{x-thm1.10} 
If $W$ is a codimension 
two log canonical center of $(X, \Delta)$ in Theorem \ref{x-thm1.2}, 
then $\mathbf M$ is b-semi-ample. 
\end{thm}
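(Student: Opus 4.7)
The plan is to reduce Theorem \ref{x-thm1.10} to Theorem \ref{x-thm7.2}, which establishes Conjecture \ref{x-conj1.9} for basic slc-trivial fibrations of relative dimension one (under an extra assumption). Since the excerpt already tells us that, in the construction used to prove Theorem \ref{x-thm1.2}, the $\mathbb{R}$-b-divisors $\mathbf{B}$ and $\mathbf{M}$ on $Z$ are precisely the discriminant and moduli $\mathbb{R}$-b-divisors of a basic $\mathbb{R}$-slc-trivial fibration $f \colon (V,\Delta_V) \to Z$ obtained from a suitable resolution of $(X,\Delta)$, it suffices to check that this particular $f$ has relative dimension one and satisfies the hypothesis of Theorem \ref{x-thm7.2}.

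First, I would perform a dimension count. The source $V$ is glued from the lc places $T$ of $(X,\Delta)$ whose center on $X$ equals $W$; each such $T$ is a prime divisor on a log resolution $Y \to X$ and hence has dimension $\dim X - 1$. Under the hypothesis $\dim W = \dim X - 2$, each map $T \to W$ is surjective with generic fibre of dimension $(\dim X - 1) - (\dim X - 2) = 1$. Gluing along the appropriate intersections preserves this property, so $f \colon V \to Z$ is surjective with one-dimensional general fibre.

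Second, I would verify that the extra assumption required by Theorem \ref{x-thm7.2} holds for the fibration $f \colon (V,\Delta_V) \to Z$ built in the proof of Theorem \ref{x-thm1.2}. This fibration is not a general basic $\mathbb{R}$-slc-trivial fibration: $V$ sits inside a log resolution of an honest log canonical pair, $\Delta_V$ is the restriction of the discrepancy divisor to $V$, and the general fibre of $f$ is the semi-log-canonical Calabi--Yau curve (or nodal curve of arithmetic genus $0$ or $1$) cut out by the collected lc places lying over the generic point of $W$. This description should match whatever structural hypothesis is imposed in Theorem \ref{x-thm7.2}, essentially because the general fibre comes from an actual dlt modification rather than from an abstract slc pair.

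Once these checks are in place, Theorem \ref{x-thm7.2} supplies a proper birational morphism $\sigma \colon Z' \to Z$ such that $\mathbf{M}_{Z'}$ is semi-ample and $\mathbf{M} = \overline{\mathbf{M}_{Z'}}$, which is exactly the conclusion that $\mathbf{M}$ is b-semi-ample. The main obstacle I anticipate is the second step: matching the geometric structure of the lc-place fibration $f \colon (V,\Delta_V) \to Z$ to the precise technical hypothesis of Theorem \ref{x-thm7.2}. This will likely require a careful analysis of how the lc places over $W$ patch together along their intersections and of the combinatorics of the semi-log-canonical structure on the general fibre.
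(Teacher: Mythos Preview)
Your overall strategy is correct and matches the paper: build the basic $\mathbb R$-slc-trivial fibration $f\colon (V,\Delta_V)\to Z$ from the proof of Theorem \ref{x-thm1.2}, note that it has relative dimension one, and apply Theorem \ref{x-thm7.2}. The dimension count in your first step is fine.

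The gap is in your second step, and you have correctly flagged it as the obstacle. The hypothesis of Theorem \ref{x-thm7.2} is not a vague structural condition on the general fibre: it is precisely that the horizontal part $\Delta_V^h$ of $\Delta_V$ with respect to $f\colon V\to Z$ is effective. This is not automatic---on a log resolution $Y\to X$ the divisor $\Delta_Y$ typically has negative coefficients, and restricting $(K_Y+\Delta_Y)|_V$ can a priori produce horizontal components with negative coefficient. The paper's argument for effectivity is short but uses a genuine idea you have not supplied: shrink $X$ around the generic point of $W$ and cut by $\dim X-2$ general hyperplanes to reduce to the case where $X$ is a surface and $W$ is a closed point; then compute on the minimal resolution of the surface, where one checks directly that the adjoint divisor on the exceptional curve $E$ has nonnegative coefficients. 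Your discussion of semi-log-canonical Calabi--Yau curves and dlt modifications is not what is needed here and does not substitute for this reduction-to-surfaces argument.
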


Theorem \ref{x-thm1.10} generalizes Kawamata's 
result (see \cite[Theorem 1]{kawamata}). 
For the details, see Corollary \ref{x-cor7.3}. 

\medskip

We briefly look at the organization of this paper. 
In Section \ref{x-sec2}, 
we recall some basic definitions and results. 
In Section \ref{x-sec3}, we introduce the notion of 
basic $\mathbb R$-slc-trivial fibrations and recall the main result of 
\cite{fujino-slc-trivial}. 
In Section \ref{x-sec4}, we slightly generalize 
the main result of \cite{fujino-slc-trivial}. 
This generalization (see Theorem \ref{x-thm4.1}) 
seems to be 
indispensable in order to treat basic $\mathbb R$-slc-trivial 
fibrations. 
In Section \ref{x-sec5}, we establish a fundamental theorem 
for basic $\mathbb R$-slc-trivial fibrations (see Theorems 
\ref{x-thm1.8} and \ref{x-thm5.1}). 
In Section \ref{x-sec6}, we prove the main result, 
that is, adjunction and inversion of adjunction for 
log canonical centers of arbitrary codimension, in full generality. 
More precisely, we first establish Theorem \ref{x-thm1.2}. 
Then we see that Theorem \ref{x-thm1.1} and Corollary \ref{x-cor1.4} 
easily follow from Theorem \ref{x-thm1.2}. 
In Section \ref{x-sec7}, we treat adjunction and inversion of adjunction 
for log canonical centers of codimension two. 

\begin{ack}
The first author was partially 
supported by JSPS KAKENHI Grant Numbers 
JP16H03925, JP16H06337, JP19H01787, JP20H00111, JP21H00974. 
The second author was partially supported by JSPS KAKENHI Grant
Numbers JP16J05875, JP19J00046. The authors thank 
Christopher Hacon very much for answering their question. 
\end{ack}

\section{Preliminaries}\label{x-sec2}

In this paper, we will freely use the standard notation as in 
\cite{fujino-fundamental}, \cite{fujino-foundations}, 
\cite{fujino-slc-trivial}, and 
\cite{fujino}. A {\em{scheme}} means 
a separated scheme of finite type over $\mathbb C$.  
A {\em{variety}} means an integral scheme, that is, 
an irreducible and reduced separated scheme of 
finite type over $\mathbb C$. 
We note that $\mathbb Q$ and 
$\mathbb R$ denote the sets of 
{\em{rational numbers}} and {\em{real numbers}}, respectively. 
We also note that $\mathbb Q_{>0}$ and $\mathbb R_{>0}$ 
are the sets of {\em{positive rational numbers}} and 
{\em{positive real numbers}}, respectively. 
Similarly, $\mathbb Q_{\geq 0}$ denotes the set of 
nonnegative rational numbers. 

Here, we collect some basic definitions for the reader's convenience. 
Let us start with the definition of {\em{potentially nef divisors}}. 

\begin{defn}[{Potentially nef divisors, see 
\cite[Definition 2.5]{fujino-slc-trivial}}]\label{x-def2.1} 
Let $X$ be a normal 
variety and let $D$ be a divisor on $X$. 
If there exist a completion $X^\dag$ of $X$, 
that is, $X^\dag$ is a complete normal 
variety and contains $X$ as a dense Zariski open subset, and 
a nef divisor $D^\dag$ on $X^\dag$ such that 
$D=D^\dag|_X$, then $D$ is called 
a {\em{potentially nef}} divisor on $X$. 
A finite $\mathbb Q_{>0}$-linear (resp.~$\mathbb R_{>0}$-linear) 
combination of potentially nef divisors is called 
a {\em{potentially nef}} $\mathbb Q$-divisor (resp.~$\mathbb R$-divisor). 
\end{defn}

We give two important remarks on potentially nef $\mathbb R$-divisors. 

\begin{rem}\label{x-rem2.2}
Let $D$ be a nef $\mathbb R$-divisor on a smooth 
projective variety $X$. 
Then $D$ is not necessarily a potentially nef $\mathbb R$-divisor. 
This means that $D$ is not always a finite $\mathbb R_{>0}$-linear 
combination of nef Cartier divisors on $X$. 
\end{rem}

\begin{rem}\label{x-rem2.3}
Let $X$ be a normal variety and let $D$ be a potentially 
nef $\mathbb R$-divisor on $X$. Then $D\cdot C\geq 0$ for every 
projective curve $C$ on $X$. 
In particular, $D$ is $\pi$-nef for every proper 
morphism $\pi\colon X\to S$ to a scheme $S$. 
\end{rem}

It is convenient to use {\em{b-divisors}} to explain 
several results. 
Here we do not repeat the definition of b-divisors. 
For the details, see \cite[Section 2]{fujino-slc-trivial}. 

\begin{defn}[Canonical b-divisors]\label{x-def2.4}
Let $X$ be a normal variety and let 
$\omega$ be a top rational differential 
form of $X$. 
Then $(\omega)$ defines a 
b-divisor $\mathbf K$. We call $\mathbf K$ 
the {\em{canonical b-divisor}} of $X$. 
\end{defn}

\begin{defn}[$\mathbb R$-Cartier closures]\label{x-def2.5}
The {\em{$\mathbb R$-Cartier 
closure}} of an $\mathbb R$-Cartier $\mathbb R$-divisor 
$D$ on a normal variety $X$ is the $\mathbb R$-b-divisor 
$\overline D$ with trace 
$$
\overline D _Y=f^*D, 
$$
where $f \colon Y\to X$ is a proper birational morphism 
from a normal variety $Y$. 
\end{defn}

We use the following definition in order to state  
our results (see Theorem \ref{x-thm1.2}). 

\begin{defn}[{\cite[Definition 2.12]{fujino-slc-trivial}}]\label{x-def2.6}
Let $X$ be a normal variety. 
An $\mathbb R$-b-divisor $\mathbf D$ of $X$ 
is {\em{b-potentially nef}} 
(resp.~{\em{b-semi-ample}}) if there 
exists a proper birational morphism $X'\to X$ from a normal 
variety $X'$ such that $\mathbf D=\overline {\mathbf D_{X'}}$, that 
is, $\mathbf D$ is the $\mathbb R$-Cartier 
closure of $\mathbf D_{X'}$, and that 
$\mathbf D_{X'}$ is potentially nef 
(resp.~semi-ample). 
An $\mathbb R$-b-divisor $\mathbf D$ 
of $X$ is {\em{$\mathbb R$-b-Cartier}} 
if there is a proper birational morphism $X'\to X$ from a normal 
variety $X'$ such that $\mathbf D=\overline{\mathbf D_{X'}}$. 
Obviously, $\mathbf D$ 
is said to be {\em{$\mathbb Q$-b-Cartier}} 
when $\mathbf D_{X'}$ is $\mathbb Q$-Cartier 
and 
$\mathbf D=\overline {\mathbf D_{X'}}$. 
\end{defn}

For the reader's convenience, let us recall the 
definition of singularities of pairs. The following definition is 
standard and is well known. 

\begin{defn}[Singularities of pairs]\label{x-def2.7}
Let $X$ be a variety and let $E$ be a prime divisor on $Y$ 
for some birational
morphism $f\colon Y\to X$ from a normal variety $Y$. 
Then $E$ is called a divisor {\em{over}} $X$. 
A {\em{normal pair}} $(X, \Delta)$ consists of 
a normal variety $X$ and an $\mathbb R$-divisor $\Delta$ on $X$ 
such that $K_X+\Delta$ is $\mathbb R$-Cartier. 
Let $(X, \Delta)$ be a normal pair and let 
$f\colon Y\to X$ be a projective 
birational morphism from a normal variety $Y$. 
Then we can write 
$$
K_Y=f^*(K_X+\Delta)+\sum _E a(E, X, \Delta)E
$$ 
with 
$$f_*\left(\underset{E}\sum a(E, X, \Delta)E\right)=-\Delta, 
$$ 
where $E$ runs over prime divisors on $Y$. 
We call $a(E, X, \Delta)$ the {\em{discrepancy}} of $E$ with 
respect to $(X, \Delta)$. 
Note that we can define the discrepancy $a(E, X, \Delta)$ for 
any prime divisor $E$ over $X$ by taking a suitable 
resolution of singularities of $X$. 
If $a(E, X, \Delta)\geq -1$ (resp.~$>-1$) for 
every prime divisor $E$ over $X$, 
then $(X, \Delta)$ is called {\em{sub log canonical}} (resp.~{\em{sub 
kawamata log terminal}}). 
We further assume that $\Delta$ is effective. 
Then $(X, \Delta)$ is 
called {\em{log canonical}} 
and {\em{kawamata log terminal}} 
if it is sub log canonical and sub kawamata log terminal, respectively. 
When $\Delta$ is effective and $a(E, X, \Delta)>-1$ holds 
for every exceptional divisor $E$ over $X$, we 
say that $(X, \Delta)$ is {\em{purely log terminal}}. 

Let $(X, \Delta)$ be a log canonical pair. If there 
exists a projective birational morphism 
$f\colon Y\to X$ from a smooth variety $Y$ such that 
both $\Exc(f)$, 
the exceptional locus of $f$,  
and  $\Exc(f)\cup \Supp f^{-1}_*\Delta$ are simple 
normal crossing divisors on $Y$ and that 
$a(E, X, \Delta)>-1$ holds for every 
$f$-exceptional divisor $E$ on $Y$, 
then $(X, \Delta)$ is 
called {\em{divisorial log terminal}} ({\em{dlt}}, for short). 
It is well known that if $(X, \Delta)$ 
is purely log terminal then 
it is divisorial log terminal. 
\end{defn}

In this paper, the notion of non-lc loci and log canonical 
centers is indispensable. 

\begin{defn}[Non-lc loci and log canonical centers]\label{x-def2.8}
Let $(X,\Delta)$ be a normal pair. 
If there exist a projective birational morphism 
$f\colon Y\to X$ from a normal variety $Y$ and a prime divisor $E$ on $Y$ 
such that $(X, \Delta)$ is 
sub log canonical in a neighborhood of the 
generic point of $f(E)$ and that 
$a(E, X, \Delta)=-1$, then $f(E)$ is called a 
{\em{log canonical center}} of 
$(X, \Delta)$. 

From now on, we further assume that 
$\Delta$ is effective. 
The {\em{non-lc locus}} of $(X,\Delta)$, 
denoted by $\Nlc(X, \Delta)$, is the smallest 
closed subset $Z$ of $X$ such that the 
complement $(X\setminus Z, \Delta|_{X\setminus Z})$ is log canonical. 
We can define a natural scheme structure on 
$\Nlc(X, \Delta)$ by the non-lc ideal sheaf 
$\mathcal J_{\NLC}(X, \Delta)$ of $(X, \Delta)$. 
For the definition of $\mathcal J_{\NLC}(X, \Delta)$, see 
\cite[Section 7]{fujino-fundamental}. 
\end{defn}

We omit the precise definition of {\em{NQC generalized 
log canonical pairs}} and {\em{NQC generalized kawamata 
log terminal pairs}} here since we need it only in 
Theorem \ref{x-thm1.1} and the statement of 
Theorem \ref{x-thm1.2} is sharper than that of 
Theorem \ref{x-thm1.1}. 
For the basic definitions and properties of {\em{generalized 
polarized pairs}}, we recommend the reader to see 
\cite[Section 2]{han-li}. Note that 
the notion of generalized pairs plays a crucial role 
in the recent study of higher-dimensional algebraic 
varieties. 

\begin{defn}\label{x-def2.9}
Let $X$ be an equidimensional reduced scheme. 
Note that $X$ is not necessarily regular in codimension one. 
Let $D$ be an $\mathbb R$-divisor (resp.~a $\mathbb Q$-divisor), 
that is, 
$D$ is a finite formal sum $\sum _i d_iD_i$, where 
$D_i$ is an irreducible reduced closed subscheme of $X$ of 
pure codimension one and $d_i\in \mathbb R$ 
(resp.~$d_i\in \mathbb Q$) for every $i$ 
such that $D_i\ne D_j$ for $i\ne j$. 
We put 
\begin{equation*}
D^{<1} =\sum _{d_i<1}d_iD_i, \quad 
D^{= 1}=\sum _{d_i= 1} D_i, \quad 
D^{>1} =\sum _{d_i>1}d_iD_i, \quad \text{and} \quad 
\lceil D\rceil =\sum _i \lceil d_i \rceil D_i, 
\end{equation*}
where 
$\lceil d_i\rceil$ is the integer defined by $d_i\leq 
\lceil d_i\rceil <d_i+1$. 
We note that $\lfloor D\rfloor =-\lceil -D\rceil$ and 
$\{D\}=D-\lfloor D\rfloor$. Similarly, we put 
$$
D^{\geq 1}=\sum _{d_i\geq 1}d_i D_i. 
$$

Let $D$ be an $\mathbb R$-divisor (resp.~a $\mathbb Q$-divisor) 
as above. 
We call $D$ a {\em{subboundary}} $\mathbb R$-divisor 
(resp.~$\mathbb Q$-divisor) 
if $D=D^{\leq 1}$ holds. 
When $D$ is effective and $D=D^{\leq 1}$ holds, 
we call $D$ a {\em{boundary}} 
$\mathbb R$-divisor (resp.~$\mathbb Q$-divisor). 

We further assume that 
$f\colon X\to Y$ is a surjective morphism onto a variety $Y$ 
such that every irreducible component of $X$ is dominant onto $Y$. 
Then we put 
$$
D^v=\sum _{f(D_i)\subsetneq Y}d_i D_i \quad \text{and}\quad 
D^h=\sum _{f(D_i)=Y}d_i D_i. 
$$
We call $D^v$ (resp.~$D^h$) the {\em{vertical part}} 
(resp.~{\em{horizontal part}})  
of $D$ 
with respect to $f\colon X\to Y$. 
\end{defn}

\section{On basic slc-trivial fibrations}\label{x-sec3}

Roughly speaking, a basic slc-trivial fibration is a canonical bundle 
formula for simple normal crossing pairs. 
It was first introduced in \cite{fujino-slc-trivial} based 
on \cite{fujino-fujisawa}. 
Let us start with the definition of simple normal crossing pairs. 

\begin{defn}[Simple normal crossing pairs]\label{x-def3.1}
A pair $(X, B)$ consists of 
an equidimensional reduced scheme $X$ and 
an $\mathbb R$-divisor $B$ on $X$. 
We say that the pair $(X, B)$ is {\em{simple 
normal crossing}} at a point $x\in X$ if 
$X$ has a Zariski open neighborhood 
$U$ of $x$ that can be embedded in a 
smooth variety $M$, 
where $M$ has a regular system of parameters 
$(x_1, \ldots, x_p, y_1, \ldots, y_r)$ at $x=0$ in which 
$U$ is defined by a monomial equation 
$$
x_1\cdots x_p=0
$$ 
and 
$$
B|_U=\sum _{i=1}^r \alpha_i (y_i=0)|_U, \quad \alpha_i\in \mathbb R. 
$$ 
We say that $(X, B)$ is a 
{\em{simple normal crossing pair}} if it is simple 
normal crossing at every point of $X$. 

Let $(X, B)$ be a simple normal crossing pair and let 
$\nu\colon X^\nu \to X$ be the normalization. 
We define $B^\nu$ by $K_{X^\nu}+B^\nu=\nu^*(K_X+B)$, that is, 
$B^\nu$ is the sum of the inverse images of $B$ and the singular locus of $X$. 
Then a {\em{stratum}} of $(X, B)$ is an irreducible component 
of $X$ or the $\nu$-image of some log canonical 
center of $(X^\nu, B^\nu)$. 
 
Let $(X, B)$ be a simple normal crossing pair and let 
$X=\bigcup_{i\in I} X_i$ be the irreducible decomposition of $X$. 
Then a {\em{stratum}} of $X$ means an irreducible 
component of $X_{i_1}\cap \cdots \cap X_{i_k}$ for 
some $\{i_1, \ldots, i_k\}\subset I$. 
It is easy to see that $W$ is a stratum of $X$ if and 
only if $W$ is a stratum of $(X, 0)$. 
\end{defn}

We introduce the notion of basic slc-trivial fibrations. 
In \cite{fujino-slc-trivial}, we only 
treat basic $\mathbb Q$-slc-trivial 
fibrations.  

\begin{defn}[{Basic slc-trivial fibrations, 
see \cite[Definition 4.1]{fujino-slc-trivial}}]\label{x-def3.2}
A {\em{pre-basic $\mathbb Q$-slc-trivial 
{\em{(}}resp.~$\mathbb R$-slc-trivial{\em{)}} fibration}} 
$f \colon (X, B)\to Y$ consists of 
a projective surjective morphism 
$f \colon X\to Y$ and a simple normal crossing pair $(X, B)$ satisfying 
the following properties: 
\begin{itemize}
\item[(1)] $Y$ is a normal variety,   
\item[(2)] every stratum of $X$ is dominant onto $Y$ and 
$f_*\mathcal O_X\simeq \mathcal O_Y$, 
\item[(3)] $B$ is a $\mathbb Q$-divisor 
(resp.~an $\mathbb R$-divisor) 
such that $B=B^{\leq 1}$ holds 
over 
the generic point of $Y$, and 
\item[(4)] there exists 
a $\mathbb Q$-Cartier $\mathbb Q$-divisor 
(resp.~an $\mathbb R$-Cartier $\mathbb R$-divisor) 
$D$ on $Y$ such that 
$
K_X+B\sim _{\mathbb Q}f^*D 
$ (resp.~$K_X+B\sim _{\mathbb R} f^*D$), 
that is, $K_X+B$ is $\mathbb Q$-linearly 
(resp.~$\mathbb R$-linearly) equivalent to $f^*D$.   
\end{itemize}
If a pre-basic $\mathbb Q$-slc-trivial 
(resp.~$\mathbb R$-slc-trivial) 
fibration $f \colon (X, B)\to Y$ also satisfies 
\begin{itemize}
\item[(5)] $\rank f_*\mathcal O_X(\lceil -(B^{<1})\rceil)=1$, 
\end{itemize}
then it is called a {\em{basic $\mathbb Q$-slc-trivial}} 
(resp.~{\em{$\mathbb R$-slc-trivial}}) {\em{fibration}}. 
\end{defn}

If there is no danger of confusion, 
we sometimes use {\em{{\em{(}}pre-{\em{)}}basic 
slc-trivial fibrations}} to denote 
{\em{{\em{(}}pre-{\em{)}}basic $\mathbb Q$-slc-trivial fibrations}} 
or {\em{{\em{(}}pre-{\em{)}}basic $\mathbb R$-slc-trivial fibrations}}. 

\begin{rem}[see Remark \ref{x-rem4.5}]\label{x-rem3.3}
The condition $f_*\mathcal O_X\simeq \mathcal O_Y$ in 
(2) in Definition \ref{x-def3.2} does not play an important role. 
Moreover, we have to treat the case where $\mathcal O_Y\subsetneq  
f_*\mathcal O_X$ in this paper. 
The reader can find that we do not need the condition 
$f_*\mathcal O_X\simeq \mathcal O_Y$ in many places 
in \cite{fujino-slc-trivial}. Hence it may be better to remove 
the condition $f_*\mathcal O_X\simeq \mathcal O_Y$ from the 
definition of pre-basic slc-trivial fibrations 
(see \cite[Definition 4.1]{fujino-slc-trivial} and Definition \ref{x-def3.2}). 
However, we keep it here not to cause unnecessary confusion. 

Note that the condition $f_*\mathcal O_X\simeq \mathcal O_Y$ 
always holds for basic slc-trivial fibrations even when we remove 
it from the definition of 
pre-basic slc-trivial fibrations.
We will see it more precisely. 
It is sufficient to see 
that if every stratum of $X$ is dominant onto $Y$ with 
$\rank f_*\mathcal O_X(\lceil -(B^{<1})\rceil)=1$ then 
the natural map $\mathcal O_Y\to f_*\mathcal O_X$ must be 
an isomorphism. 
We note that there are natural inclusions 
$$
\mathcal O_Y\hookrightarrow f_*\mathcal O_X\hookrightarrow 
f_*\mathcal O_X(\lceil -(B^{<1})\rceil)
$$ 
since $\lceil -(B^{<1})\rceil$ is effective. 
Hence $\mathcal O_Y\hookrightarrow f_*\mathcal O_X$ is an 
isomorphism over some nonempty Zariski open subset of $Y$ 
and $\rank f_*\mathcal O_X=1$ holds. 
We consider the Stein factorization 
$$
f\colon X\longrightarrow 
Z:=\Spec _Y f_*\mathcal O_X\overset {\alpha}{\longrightarrow} Y
$$
of $f\colon X\to Y$. Since every irreducible component of 
$X$ is dominant onto $Y$, $Z$ is a variety. 
Moreover, $\alpha\colon Z\to Y$ is birational since $\rank f_*\mathcal O_X=1$. 
By Zariski's main theorem, $\alpha\colon Z\to Y$ is an isomorphism. 
Hence the natural map $\mathcal O_Y\to f_*\mathcal O_X$ is 
an isomorphism. 
\end{rem}

In order to define discriminant $\mathbb R$-b-divisors and 
moduli $\mathbb R$-b-divisors for basic slc-trivial fibrations, 
we need the notion 
of {\em{induced $($pre-$)$basic slc-trivial fibrations}}. 

\begin{defn}[{Induced (pre-)basic slc-trivial 
fibrations, \cite[4.3]{fujino-slc-trivial}}]\label{x-def3.4}
Let $f \colon (X, B)\to Y$ be a 
(pre-)basic slc-trivial fibration 
and let $\sigma \colon Y'\to Y$ be a generically finite 
surjective morphism from a normal variety $Y'$. 
Then we have 
an {\em{induced {\em{(}}pre-{\em{)}}basic slc-trivial fibration}} 
$f' \colon (X', B_{X'})\to Y'$, where 
$B_{X'}$ is defined by $\mu^*(K_X+B)=K_{X'}+B_{X'}$, with 
the following commutative diagram: 
$$
\xymatrix{
   (X', B_{X'}) \ar[r]^{\mu} \ar[d]_{f'} & (X, B)\ar[d]^{f} \\
   Y' \ar[r]_{\sigma} & Y, 
} 
$$
where $X'$ coincides with 
$X\times _{Y}Y'$ over a nonempty Zariski open subset of $Y'$. 
More precisely, $(X', B_{X'})$ is a simple 
normal crossing pair with a morphism 
$X'\to X\times _Y Y'$ that is an isomorphism over 
a nonempty Zariski open subset of $Y'$ such that 
$X'$ is projective over $Y'$ and that every 
stratum of $X'$ is dominant onto 
$Y'$. 
\end{defn}

Now we are ready to define {\em{discriminant 
$\mathbb R$-b-divisors}} and 
{\em{moduli $\mathbb R$-b-divisors}} for 
basic slc-trivial fibrations. 

\begin{defn}[{Discriminant and 
moduli $\mathbb R$-b-divisors, \cite[4.5]{fujino-slc-trivial}}]\label{x-def3.5} 
Let $f \colon (X, B)\to Y$ be a (pre-)basic 
slc-trivial fibration as in Definition \ref{x-def3.2}. 
Let $P$ be a prime divisor on $Y$. 
By shrinking $Y$ around the generic point of $P$, 
we assume that $P$ is Cartier. We set 
$$
b_P=\max \left\{t \in \mathbb R\, \left|\, 
\begin{array}{l}  {\text{$(X^\nu, B^\nu+t\nu^*f^*P)$ is sub log canonical}}\\
{\text{over the generic point of $P$}} 
\end{array}\right. \right\},  
$$ 
where $\nu \colon X^\nu\to X$ is the normalization and 
$K_{X^\nu}+B^\nu=\nu^*(K_X+B)$, that is, 
$B^\nu$ is the sum of the inverse images of $B$ and the singular 
locus of $X$, and 
set $$
B_Y=\sum _P (1-b_P)P, 
$$ 
where $P$ runs over prime divisors on $Y$. 
Then it is easy to  see that 
$B_Y$ is a well-defined $\mathbb R$-divisor on 
$Y$ and is called the {\em{discriminant 
$\mathbb R$-divisor}} of $f \colon (X, B)\to Y$. We set 
$$
M_Y=D-K_Y-B_Y
$$ 
and call $M_Y$ the {\em{moduli $\mathbb R$-divisor}} of $f \colon 
(X, B)\to Y$. 
By definition, we have 
$$
K_X+B\sim _{\mathbb R}f^*(K_Y+B_Y+M_Y). 
$$

Let $\sigma\colon Y'\to Y$ be a proper birational morphism 
from a normal variety $Y'$ and let $f' \colon (X', B_{X'})\to Y'$ be 
an induced (pre-)basic slc-trivial fibration 
by $\sigma \colon Y'\to Y$.  
We can define $B_{Y'}$, $K_{Y'}$ and $M_{Y'}$ such that 
$\sigma^*D=K_{Y'}+B_{Y'}+M_{Y'}$, 
$\sigma_*B_{Y'}=B_Y$, $\sigma _*K_{Y'}=K_Y$ 
and $\sigma_*M_{Y'}=M_Y$. We note that 
$B_{Y'}$ is independent of the choice of $(X', B_{X'})$, 
that is, $B_{Y'}$ is well defined. Hence 
there exist a unique $\mathbb R$-b-divisor $\mathbf B$ 
such that 
$\mathbf B_{Y'}=B_{Y'}$ for every 
$\sigma \colon Y'\to Y$ and a unique 
$\mathbb R$-b-divisor $\mathbf M$ 
such that $\mathbf M_{Y'}=M_{Y'}$ for 
every $\sigma \colon Y'\to Y$. 
Note that $\mathbf B$ is called the 
{\em{discriminant $\mathbb R$-b-divisor}} and 
that $\mathbf M$ is called 
the {\em{moduli $\mathbb R$-b-divisor}} associated 
to $f \colon (X, B)\to Y$. 
We sometimes simply say that $\mathbf M$ is 
the {\em{moduli part}} of $f \colon (X, B)\to Y$. 
\end{defn}

Let $g\colon V\to Y$ be a proper surjective morphism 
from an equidimensional normal 
scheme $V$ onto a normal variety $Y$ 
such that every irreducible component of $V$ is 
dominant onto $Y$. 
Let $G$ 
be an $\mathbb R$-divisor on $V$ such that 
$K_V+G$ is $\mathbb R$-Cartier. Assume that 
$(V, G)$ is sub log canonical over the generic point of $Y$. 
Let $\sigma\colon Y'\to Y$ be a generically 
finite surjective morphism 
from a normal variety $Y'$. 
Then we have the following commutative diagram: 
$$
\xymatrix{
   (V', G') \ar[r]^{\mu} \ar[d]_{g'} & (V, G)\ar[d]^{g} \\
   Y' \ar[r]_{\sigma} & Y, 
} 
$$ 
where $V'$ is the normalization of the main 
components of $V\times _Y Y'$ 
and $G'$ is defined by 
$K_{V'}+G'=\mu^*(K_V+G)$. 
Then we can define the discriminant $\mathbb R$-divisor 
$B_Y$ on $Y$ and the discriminant 
$\mathbb R$-b-divisor $\mathbf B$ as in Definition \ref{x-def3.5}. 
Let $f\colon (X, B)\to Y$ be a (pre-)basic slc-trivial fibration and let 
$\nu\colon X^\nu\to X$ be the normalization with 
$K_{X^\nu}+B^\nu=\nu^*(K_X+B)$. 
Then the discriminant $\mathbb R$-b-divisor $\mathbf B$ 
associated to $f\colon (X, B)\to Y$ defined in Definition \ref{x-def3.5} 
obviously coincides with 
that of $f\circ \nu\colon (X^\nu, B^\nu)\to Y$ by definition. 

\medskip

Let us see the main result of \cite{fujino-slc-trivial}. 

\begin{thm}[{\cite[Theorem 1.2]{fujino-slc-trivial}}]\label{x-thm3.6}
Let $f\colon (X, B)\to Y$ be a basic $\mathbb Q$-slc-trivial fibration and 
let $\mathbf B$ and $\mathbf M$ be the 
discriminant and moduli $\mathbb Q$-b-divisors 
associated to $f\colon (X, B)\to Y$, respectively. 
Then we have the following properties: 
\begin{itemize}
\item[(i)] $\mathbf K+\mathbf B$ is $\mathbb Q$-b-Cartier, 
where $\mathbf K$ is the canonical 
b-divisor of $Y$, and 
\item[(ii)] $\mathbf M$ is b-potentially nef, that is,  
there exists a proper birational morphism $\sigma\colon Y'\to Y$ 
from a normal variety $Y'$ such that 
$\mathbf M_{Y'}$ is a potentially nef $\mathbb Q$-divisor on $Y'$ and 
that $\mathbf M=\overline{\mathbf M_{Y'}}$. 
\end{itemize}
\end{thm}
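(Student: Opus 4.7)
The plan is to follow the Ambro-style strategy for canonical bundle formulas, adapted to the simple normal crossing / slc setting via the Fujino--Fujisawa theory of mixed Hodge structures. The core idea is: after pulling back to a sufficiently fine birational model of $Y$, both the discriminant and moduli b-divisors become computable as the $\mathbb{Q}$-Cartier closures of explicit $\mathbb{Q}$-divisors on that model, with the moduli part arising from a semi-positive Hodge bundle.

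First I would construct a preferred model $\sigma\colon Y'\to Y$. By resolving $Y$, log-resolving $(Y, \Supp D)$, and then log-resolving the pullback of $(X, B)$ while flattening $f$, I can arrange that $Y'$ is smooth quasi-projective, that there is a simple normal crossing divisor $\Sigma'$ on $Y'$ containing $\Supp \sigma^* D$, and that the induced pre-basic $\mathbb{Q}$-slc-trivial fibration $f'\colon (X', B_{X'})\to Y'$ has every stratum of $(X', \Supp B_{X'})$ smooth over $Y'\setminus \Sigma'$. The existence of such a simultaneous resolution is standard and combines log resolution with flattening and semistable-reduction-type arguments applied to $f$.

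For assertion (i), the discriminant $\mathbf B_{Y'}$ is a $\mathbb{Q}$-divisor supported on $\Sigma'$, with rationality of each coefficient $b_P$ guaranteed because log canonical thresholds of $\mathbb{Q}$-divisors are rational. The substantive point is that $\mathbf B_{Y'}$ is preserved by any further birational pullback: if $\tau\colon Y''\to Y'$ is proper birational and we form the induced (pre-)basic slc-trivial fibration over $Y''$, then $\mathbf B_{Y''}=\tau^*\mathbf B_{Y'}$. This invariance follows from the SNC hypothesis and the smoothness of all strata over $Y'\setminus \Sigma'$, which makes the log canonical thresholds along exceptional divisors computable directly and compatible with pullback. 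Hence $\mathbf K+\mathbf B=\overline{\mathbf K_{Y'}+\mathbf B_{Y'}}$ is $\mathbb{Q}$-b-Cartier.

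For assertion (ii), which is the Hodge-theoretic heart of the argument, note that over $Y'\setminus \Sigma'$ the fibers of $f'$ are unions of smooth projective varieties glued along smooth subvarieties, and the rank-one coherent sheaf $f'_*\mathcal{O}_{X'}(\lceil -(B_{X'}^{<1})\rceil)$ underlies a variation of mixed Hodge structure. Applying the Fujino--Fujisawa semi-positivity theorem for the lowest-weight graded piece of the logarithmic pushforward from a simple normal crossing pair — the slc analogue of Fujita--Kawamata semi-positivity — one obtains that a suitable multiple of $\mathbf M_{Y'}$ extends to a nef $\mathbb{Q}$-divisor on a smooth projective compactification $\overline{Y'}$ of $Y'$ along which $\Sigma'\cup (\overline{Y'}\setminus Y')$ is SNC. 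Restricting back shows $\mathbf M_{Y'}$ is a $\mathbb{Q}_{>0}$-linear combination of nef Cartier divisors restricted from the compactification, i.e.\ potentially nef, and the same invariance under blowups used in (i) yields $\mathbf M=\overline{\mathbf M_{Y'}}$. The hard part is precisely this step: the slc (rather than klt) setting forces one to work with mixed rather than pure variations of Hodge structure, so Fujita--Kawamata is not directly available and the whole weight of the argument rests on Fujino--Fujisawa's theory of Hodge filtrations on $R^\bullet f'_*$ of a log canonical line bundle on $(X', B_{X'})$.
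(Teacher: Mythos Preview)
The paper does not prove Theorem~\ref{x-thm3.6} directly; it is quoted from \cite{fujino-slc-trivial}. However, the proof of Theorem~\ref{x-thm4.1} in Section~\ref{x-sec4} sketches how the argument in \cite{fujino-slc-trivial} actually runs, and comparing your outline against that sketch exposes two substantive omissions.

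First, you propose to apply Fujino--Fujisawa semi-positivity directly to $f'_*\mathcal O_{X'}(\lceil -(B_{X'}^{<1})\rceil)$. This does not work as stated: the relation $K_{X'}+B_{X'}\sim_{\mathbb Q} f'^*D'$ is only a $\mathbb Q$-linear equivalence, so this rank-one sheaf is not of the shape to which the VMHS machinery applies. The actual argument (visible in Lemma~\ref{x-lem4.4} and Remark~\ref{x-rem4.5}) passes through a $b$-fold cyclic cover $\pi\colon \widetilde X\to X$ associated to $b(K_X+B-f^*D)\sim 0$, followed by a partial resolution $d\colon V\to \widetilde X$ so that the resulting pair is again simple normal crossing and still fibred smoothly over $Y\setminus\Sigma$. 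Only on $V$ does one have a genuine relative log canonical bundle that is linearly trivial along the fibres, so that the Hodge-theoretic semi-positivity applies. This cyclic-cover step is not a technicality one can wave away; it is precisely why Lemma~\ref{x-lem4.4} occupies most of Section~\ref{x-sec4}.

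Second, even after the cyclic cover, semi-positivity requires unipotent local monodromies around $\Sigma$. Your sentence ``a suitable multiple of $\mathbf M_{Y'}$ extends to a nef $\mathbb Q$-divisor on a smooth projective compactification'' supplies no mechanism for this. The argument in the paper (following \cite{fujino-slc-trivial}) takes a finite Kawamata cover $\tau\colon \overline Y\to Y$ achieving unipotent reduction (\cite[Lemma~7.3]{fujino-slc-trivial}), applies \cite[Theorem~3.1]{fujino-slc-trivial} to get that $\mathbf M_{\overline Y}$ is nef Cartier there, and then descends via $\tau^*\mathbf M_Y=\mathbf M_{\overline Y}$ (\cite[Lemma~4.10]{fujino-slc-trivial}). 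Stability of $\mathbf M$ under further blow-ups $\gamma\colon Y'\to Y$ is obtained by repeating this covering construction over $Y'$ and comparing, not by the direct log-canonical-threshold computation you sketch for $\mathbf B$. Your high-level picture --- Ambro-style canonical bundle formula, with Fujino--Fujisawa replacing Fujita--Kawamata because the source is slc rather than klt --- is correct, but both the cyclic cover and the unipotent reduction are load-bearing steps that your outline omits.
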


The following result was established in \cite{fujino-fujisawa-liu}. 

\begin{thm}[{\cite[Corollary 1.4]{fujino-fujisawa-liu}}]\label{x-thm3.7} 
In Theorem \ref{x-thm3.6}, if $Y$ is a curve, 
then $\mathbf M_Y$ is semi-ample. 
\end{thm}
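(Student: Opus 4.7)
The plan is to reduce to the case where $Y$ is a smooth projective curve and then combine the Hodge-theoretic semi-positivity of the moduli part with finiteness of unitary monodromy.

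Since $Y$ is a normal variety of dimension one, it is smooth. By Theorem \ref{x-thm3.6}, $\mathbf M$ is b-potentially nef, so there is a proper birational morphism $\sigma\colon Y'\to Y$ from a normal variety $Y'$ with $\mathbf M=\overline{\mathbf M_{Y'}}$ and $\mathbf M_{Y'}$ potentially nef. Now $Y'$ is also a smooth curve, and any proper birational morphism between smooth curves is an isomorphism; hence $\mathbf M_Y$ itself is potentially nef. By Definition \ref{x-def2.1} it extends to a nef $\mathbb Q$-divisor on some smooth projective compactification $\bar Y$ of $Y$, and it suffices to prove semi-ampleness of that extension. So I may assume from the outset that $Y$ is a smooth projective curve and that $\mathbf M_Y$ is a nef $\mathbb Q$-divisor.

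Next, I would unpack the Hodge-theoretic construction of $\mathbf M_Y$ underlying the proof of Theorem \ref{x-thm3.6}. After a suitable generically finite base change $\tau\colon \tilde Y\to Y$ and resolution of the induced basic $\mathbb Q$-slc-trivial fibration $\tilde f\colon (\tilde X, \tilde B)\to \tilde Y$, some positive integer multiple of $\mathbf M_{\tilde Y}$ coincides, over the smooth locus of $\tilde f$, with the first Chern class of the bottom Hodge piece of a polarizable variation of $\mathbb Q$-Hodge structure obtained from the fibers of $\tilde f$. This equips $\mathbf M_{\tilde Y}$ with a natural semi-positive Hodge metric whose asymptotic behaviour at the punctures is controlled by Schmid's nilpotent orbit theorem.

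Finally, I would argue by a dichotomy on $\bar Y$. If $\deg \mathbf M_Y>0$, then a positive multiple is ample and hence semi-ample. If $\deg \mathbf M_Y=0$, then after pulling back by $\tau$ the degree of $\mathbf M_{\tilde Y}$ also vanishes, and semi-positivity of the Hodge metric forces its curvature to vanish identically, so the underlying local system is unitary; by the monodromy theorem the monodromy is then finite, which in turn forces a positive multiple of $\mathbf M_{\tilde Y}$ to be trivial. Descending via $\tau$ makes $\mathbf M_Y$ torsion, hence semi-ample. The main obstacle is justifying the passage from vanishing Hodge curvature to finite monodromy in the setting of basic $\mathbb Q$-slc-trivial fibrations, where the relevant VHS is generally only a variation of \emph{mixed} Hodge structure and one must invoke admissibility and asymptotic theory carefully: this is the substantive content of \cite[Corollary~1.4]{fujino-fujisawa-liu}, which the present paper takes as a black box.
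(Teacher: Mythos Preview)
The paper does not actually prove Theorem~\ref{x-thm3.7}; it is stated with attribution to \cite[Corollary~1.4]{fujino-fujisawa-liu} and used as an input, exactly as you yourself note in your final sentence. So there is no ``paper's own proof'' to compare against, and your sketch is really an outline of what you expect the argument in \cite{fujino-fujisawa-liu} to be.

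As such an outline, your reduction to a smooth projective curve and the degree dichotomy are correct, and identifying the moduli part with a Hodge line bundle after base change is the right strategy. One point deserves more care: the inference ``curvature vanishes $\Rightarrow$ unitary local system $\Rightarrow$ finite monodromy'' is not valid as stated. A unitary local system on a curve need not have finite monodromy; what one actually uses is that the relevant piece arises inside a polarizable (graded, in the mixed case) variation, so the monodromy is semisimple and the local monodromies are quasi-unipotent, and the flat subbundle carrying the bottom Hodge piece has monodromy that is simultaneously unitary and quasi-unipotent at the punctures, hence of finite order there---together with the degree-zero condition and the canonical extension this forces the line bundle to be torsion. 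In the basic slc-trivial setting the variation is genuinely mixed, and making this step rigorous requires the admissibility and asymptotic results for graded polarizable admissible variations of mixed Hodge structure; that is precisely the work done in \cite{fujino-fujisawa-liu}, which you rightly flag as the real content.
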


We close this section with important remarks 
on \cite{fujino-slc-trivial}. 

\begin{rem}\label{x-rem3.8}
In (d) in \cite[Section 6]{fujino-slc-trivial}, we 
assume that $\Supp M_Y \subset \Supp \Sigma_Y$. 
However, this conditions is unnecessary. This is 
because if $P$ is not an irreducible component 
of $\Supp \Sigma_Y$ then we can always take a prime 
divisor $Q$ on $V$ such that $\mult _Q(-B_V+h^*B_Y)=0$, 
$h(Q)=P$, and $\mult _Q h^*P=1$ 
(see \cite[Proposition 6.3 (iv)]{fujino-slc-trivial}). 
\end{rem}

\begin{rem}\label{x-rem3.9}
In \cite[6.1]{fujino-slc-trivial}, 
we assume that $\Supp \left( B-f^*(B_Y+M_Y)\right)$ 
is a simple normal crossing divisor on $X$. 
However, we do not need this assumption. 
All we need in \cite[6.1]{fujino-slc-trivial} is 
the fact that the support of $\{\Delta\}$ 
is a simple normal crossing divisor on $X$. 
We note that 
$$\Supp \{\Delta\}\subset \Supp \left( B-f^*(B_Y+M_Y)\right)$$ 
always holds since $\Delta =K_{X/Y}+B-f^*(B_Y+M_Y)$.  
\end{rem}

\section{Fundamental theorem for basic $\mathbb Q$-slc-trivial 
fibrations}\label{x-sec4}

In this section, we will slightly generalize 
the main theorem of \cite{fujino-slc-trivial} (see Theorem 
\ref{x-thm3.6}). The following theorem is the main result of this 
section. 

\begin{thm}[{see \cite[Theorem 1.2]{fujino-slc-trivial}}]\label{x-thm4.1}
Let $f\colon (X, B)\to Y$ be a basic 
$\mathbb Q$-slc-trivial fibration such 
that $Y$ is a smooth quasi-projective variety. 
We write 
$K_X+B\sim _{\mathbb Q} f^*D$. 
Assume that 
there exists a simple normal crossing divisor 
$\Sigma$ on $Y$ such that 
$\Supp D\subset \Sigma$ and 
that every stratum of $(X, \Supp B)$ is 
smooth over $Y\setminus \Sigma$. 
Then 
\begin{itemize}
\item[(i)] $\mathbf K +\mathbf B=
\overline {\mathbf K_Y+\mathbf B_Y}$ holds, and 
\item[(ii)] $\mathbf M_Y$ is a potentially nef $\mathbb Q$-divisor 
on $Y$ with $\mathbf M=\overline {\mathbf M_Y}$. 
\end{itemize}
\end{thm}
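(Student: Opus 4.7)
My approach is to adapt the proof of Theorem \ref{x-thm3.6} as given in \cite[Section 6]{fujino-slc-trivial} and show that, under the strengthened hypothesis that $Y$ is smooth with $\Sigma$ a simple normal crossing divisor and every stratum of $(X,\Supp B)$ smooth over $Y\setminus \Sigma$, the descent model produced in that proof can be taken to be $Y$ itself, with no further birational modification required. Remarks \ref{x-rem3.8} and \ref{x-rem3.9} already anticipate this: two auxiliary simple normal crossing hypotheses used in \cite[Section 6]{fujino-slc-trivial} turn out to be unnecessary, and the standard setup assumed here is exactly the one in which the intermediate constructions run directly on $Y$.

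First I would reduce (i) and (ii) to the single statement that $\mathbf M_Y$ is potentially nef with $\mathbf M=\overline{\mathbf M_Y}$. Indeed, once $\mathbf M_{Y'}=\sigma^*\mathbf M_Y$ is known for every proper birational $\sigma\colon Y'\to Y$, the two canonical bundle relations $D=K_Y+\mathbf B_Y+\mathbf M_Y$ and $\sigma^*D=K_{Y'}+\mathbf B_{Y'}+\mathbf M_{Y'}$ from Definition \ref{x-def3.5} force
$$
K_{Y'}+\mathbf B_{Y'}=\sigma^*(K_Y+\mathbf B_Y),
$$
which is precisely (i). So the entire content is the explicit descent of the moduli part to $Y$ together with its potential nefness.

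The core of the proof is the Hodge-theoretic construction of $\mathbf M_Y$. I would use a Kawamata-style covering trick applied to $K_X+B\sim_{\mathbb Q}f^*D$: after a suitable finite cover of a compactification, branched in a way compatible with our SNC data, the moduli part is identified (up to a positive rational multiple) with the first Chern class of a Hodge bundle, or the determinant of such, associated to a variation of Hodge structure over $Y\setminus \Sigma$. The hypothesis that every stratum of $(X,\Supp B)$ is smooth over $Y\setminus \Sigma$ with $\Sigma$ an SNC divisor on $Y$ is exactly the geometric input needed for the variation to have unipotent monodromies (possibly after a further finite base change that does not disturb the SNC structure on $Y$). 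Deligne's canonical extension then produces the Hodge bundle as a locally free sheaf directly on $Y$, without further blow-up, and the semi-positivity theorems invoked in \cite{fujino-slc-trivial} show that this extension becomes nef on any smooth projective compactification of $Y$ compatible with $\Sigma$. Such a compactification exists because $Y$ is quasi-projective and $\Sigma$ is SNC, which exhibits $\mathbf M_Y$ as a potentially nef $\mathbb Q$-divisor on $Y$ and gives $\mathbf M=\overline{\mathbf M_Y}$.

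The main obstacle I expect is verifying that every intermediate step of the construction in \cite[Section 6]{fujino-slc-trivial} is compatible with performing it over $Y$ itself rather than over a further birational model. In the general setting of Theorem \ref{x-thm3.6}, several preparatory modifications are made in order to reach the "standard" setup in which the Hodge bundle extends canonically; here that standard setup is already assumed, but one still needs to track carefully through the covering trick, the stable reduction of the variation, and the choice of compactification, and to confirm that each ingredient is an honest object on $Y$ (or on its chosen compactification) rather than on a strict modification. This careful bookkeeping is where Remarks \ref{x-rem3.8} and \ref{x-rem3.9} become decisive, since they remove precisely the auxiliary SNC hypotheses that would otherwise force such a modification.
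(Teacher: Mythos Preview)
Your strategic outline is right: the point is to show that the setup in \cite[Section 6]{fujino-slc-trivial} can be realized with $\Sigma_Y=\Sigma$ and no birational modification of $Y$, after which the unipotent reduction and Hodge-theoretic argument of \cite[Sections 7--8]{fujino-slc-trivial} give (ii), and (i) follows. Remarks \ref{x-rem3.8} and \ref{x-rem3.9} indeed dispose of two of the auxiliary conditions. But you have under-described what is actually the hard step.

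The construction in \cite[Section 6]{fujino-slc-trivial} begins by taking a $b$-fold cyclic cover $\pi\colon\widetilde X\to X$ associated to $b\Delta\sim 0$ with $\Delta=K_X+B-f^*D$, and then resolving $(\widetilde X,B_{\widetilde X})$ to a simple normal crossing pair $(V,B_V)$. The variation of Hodge structure lives on $V\to Y$, not on $X\to Y$, and the condition you need in order to run the Hodge-theoretic argument over $Y$ without further blow-up is that \emph{every stratum of $(V,\Supp B_V)$ is smooth over $Y\setminus\Sigma$}. Your hypothesis only says this for $(X,\Supp B)$; the cyclic cover is \'etale outside $\Supp\{\Delta\}$ but not globally, and an arbitrary resolution of $(\widetilde X,B_{\widetilde X})$ has no reason to preserve relative smoothness of strata over the base. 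This is not bookkeeping---it is a genuine geometric obstruction, and neither Remark \ref{x-rem3.8} nor Remark \ref{x-rem3.9} addresses it.

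The paper closes this gap with Lemma \ref{x-lem4.4}, whose content is precisely that the resolution $d\colon V\to\widetilde X$ can be chosen so that every stratum of $(V,\Supp B_V)$ is smooth over $Y$. The proof is not formal: it exploits the functoriality of the Bierstone--Vera Pacheco resolution (Theorem \ref{x-thm4.2} and Remark \ref{x-rem4.3}) by factoring the local picture through an \'etale map to a product $Z\times L$, performing the cyclic cover on the $\mathbb C^n$ factor, and then using functoriality to pull the canonical resolution back. Without this lemma (or an equivalent device) your plan stalls at the covering step, because you cannot guarantee that the resolved cover is in the ``standard'' position over $Y$ that the rest of the argument requires.
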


In Section \ref{x-sec5}, Theorem \ref{x-thm4.1} will be generalized 
for basic $\mathbb R$-slc-trivial fibrations (see Theorems 
\ref{x-thm1.8} and \ref{x-thm5.1}). 
We note that Theorem \ref{x-thm4.1} is indispensable for the 
proof of Theorem \ref{x-thm5.1} in Section \ref{x-sec5}. 
For the proof of 
Theorem \ref{x-thm4.1}, 
we prepare a lemma on simultaneous partial 
resolutions of singularities 
of pairs. 
Let us recall the main result of 
\cite{bvp}. 

\begin{thm}[{\cite[Theorem 1.4]{bvp}}]\label{x-thm4.2}
Let $X$ be a reduced scheme, and let $D$ be a 
$\mathbb{Q}$-divisor on $X$. 
Let $U$ be the largest open subset of $X$ such 
that $(U,D|_{U})$ is a simple normal crossing pair. 
Then there is a morphism 
$f\colon \tilde{X} \to X$, which is a 
composition of blow-ups, such that 
\begin{itemize}
\item
the exceptional locus $\Exc(f)$ is of 
pure codimension one, 
\item
putting $\tilde{D}=f^{-1}_{*}D+\Exc(f)$ 
then $(\tilde{X},\tilde{D})$ is a simple normal crossing pair, and 
\item
$f$ is an isomorphism over $U$.
\end{itemize}
\end{thm}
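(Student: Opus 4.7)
The plan is to apply a functorial Hironaka-style desingularization algorithm to the pair $(X,D)$, arranged so that it only modifies the non-SNC locus $X\setminus U$. The starting observation is that, working locally, $X$ admits an embedding into a smooth ambient variety $M$, and the extension of $D$ to $M$ together with the divisor defined by $X\subset M$ yields a concrete target for embedded resolution inside $M$.

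First I would define an upper semi-continuous invariant $\mathrm{inv}(X,D)$ at each point, vanishing precisely on $U$, that measures how far $(X,D)$ is from being simple normal crossing there. For a smooth center $Z$ contained in the maximum locus of this invariant, blowing up $Z$ strictly decreases $\mathrm{inv}$ in the well-ordered value set, so the resulting sequence of blow-ups terminates after finitely many steps. Functoriality of modern resolution algorithms (Bierstone--Milman, W\l odarczyk, Koll\'ar) guarantees that the local choices of center glue to a global sequence, yielding $f\colon \tilde X\to X$.

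To finish, I would verify the three bullets. Since the invariant vanishes on $U$, no center ever meets $U$, so $f$ is an isomorphism over $U$; and $(\tilde X,\tilde D)$ is simple normal crossing because the invariant has been driven to zero. For the pure codimension one assertion on $\mathrm{Exc}(f)$, each individual blow-up with smooth center contributes an exceptional divisor in the blown-up variety, and the total exceptional locus of the composition is the union of the strict transforms of these divisors, hence a divisor, as required.

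The main obstacle is the construction of the invariant. Classical Hironaka invariants resolve singularities of $X$ alone, whereas here one must simultaneously track (i) singularities of $X$, (ii) non-SNC behavior of $D$, and (iii) failure of $D$ to meet $X$ transversally, while remaining insensitive to the parts already in SNC position lest the algorithm disturb $U$. Designing such an invariant, with the corresponding stratification into smooth centers and the verification that it strictly drops under each admissible blow-up, is precisely the content of \cite{bvp}; once it is in hand, the remaining arguments follow the standard pattern.
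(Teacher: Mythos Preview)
The paper does not prove Theorem~\ref{x-thm4.2}; it merely recalls it verbatim from \cite[Theorem~1.4]{bvp} for later use (see the sentence ``Let us recall the main result of \cite{bvp}'' preceding the statement). So there is no ``paper's own proof'' to compare your proposal against.

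Your outline is a fair high-level description of the strategy actually carried out in \cite{bvp}: define a functorial, upper-semicontinuous invariant measuring failure of the SNC condition for the pair, blow up smooth centers in its maximal locus, and iterate. You correctly identify that the substantive work---constructing an invariant that simultaneously handles the singularities of $X$, the non-SNC behavior of $D$, and their interaction, while leaving the SNC locus $U$ untouched---is exactly what \cite{bvp} supplies, and that once this is in hand the rest is standard. One point to be careful about in your sketch: $X$ is only assumed to be a reduced scheme, not smooth, so the assertion that a blow-up in a smooth center yields a purely divisorial exceptional locus is not automatic on $X$ itself. In \cite{bvp} this is handled by working with a local embedding $X\hookrightarrow M$ into a smooth ambient variety (as you mention) and blowing up smooth centers in $M$, then restricting to the strict transform of $X$; the purity of $\Exc(f)$ comes out of that embedded construction rather than from a naive argument on $X$.
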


\begin{rem}[{Functoriality, see \cite[Remark 1.5 (3)]{bvp}}]\label{x-rem4.3}
By \cite[Remark 1.5 (3)]{bvp}, for every 
reduced scheme $X$ and a $\mathbb{Q}$-divisor $D$ on $X$ we may take 
$f_{X}\colon \tilde{X}\to X$ of Theorem \ref{x-thm4.2} 
satisfying the following funtoriality.  
Suppose that we are given an \'etale or 
a smooth morphism $\phi \colon X\to Y$ 
of reduced schemes and $\mathbb{Q}$-divisors 
$D_{X}$ and $D_{Y}$ on $X$ and $Y$ respectively such that 
\begin{itemize}
\item $\phi^*D_Y=D_X$, and 
\item
 the number of irreducible 
 components of $X$ (resp.~$\Supp D_{X}$) at 
 a point $x\in X$ coincides with that of $Y$ (resp.~$\Supp D_{Y}$) 
 at $\phi(x)\in Y$ for every $x\in X$.
\end{itemize}
Then, the morphisms $f_{X}\colon \tilde{X}\to X$ and 
$f_{Y}\colon \tilde{Y}\to Y$ as in 
Theorem \ref{x-thm4.2} form the diagram of the fiber product
$$
\xymatrix{
\tilde{X} \ar[d]_{f_{X}}\ar[r]^{\tilde{\phi}}& \tilde{Y} \ar[d]^{f_{Y}}\\ 
X \ar[r]_{\phi} &Y \ar@{}[lu]|{\square},
}
$$
that is, $\tilde{X}=X\times_{Y}\tilde{Y}$.
\end{rem}

The following lemma is a key lemma for the 
proof of Theorem \ref{x-thm4.1}. 

\begin{lem}\label{x-lem4.4}
Let $(X, B)$ be a simple normal crossing pair such that 
$B$ is a $\mathbb Q$-divisor. 
Let $f\colon X\to Y$ be a surjective morphism 
onto a smooth variety $Y$ such that 
every stratum of $(X, \Supp B)$ is smooth over $Y$. 
We put $\Delta=K_X+B$ and 
assume that $b\Delta\sim 0$ for some positive integer $b$.
We consider a $b$-fold cyclic cover 
$$
\pi\colon \widetilde X=\Spec _X 
\bigoplus _{i=0}^{b-1} \mathcal O_X(\lfloor i\Delta\rfloor) \longrightarrow X$$ 
associated to $b\Delta\sim 0$. 
We put $K_{\widetilde X}+B_{\widetilde X}=\pi^*(K_X+B)$. 
Let $\widetilde U$ be the largest Zariski open subset 
of $\widetilde X$ such that 
$(\widetilde U, B_{\widetilde X}|_{\widetilde U})$ is a simple 
normal crossing pair. 
Then there exists a morphism 
$d\colon V\to \widetilde X$ given by a composition of 
blow-ups such that 
\begin{itemize}
\item[(i)] $d$ is an isomorphism over $\widetilde U$, 
\item[(ii)] $(V, B_V)$ is a simple normal crossing pair, 
where $K_V+B_V=d^*(K_{\widetilde X}+B_{\widetilde X})$, 
and 
\item[(iii)] every stratum of $(V, \Supp B_V)$ is smooth 
over $Y$. 
\end{itemize}
\end{lem}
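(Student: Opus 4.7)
The approach is to apply Theorem \ref{x-thm4.2} directly to the reduced scheme $\widetilde{X}$ together with the $\mathbb{Q}$-divisor $B_{\widetilde{X}}$. Since $(\widetilde{U}, B_{\widetilde{X}}|_{\widetilde{U}})$ is simple normal crossing by the very definition of $\widetilde{U}$, the theorem produces a composition of blow-ups $d\colon V\to \widetilde{X}$ which is an isomorphism over $\widetilde{U}$ and such that the pair $(V,\, d^{-1}_{*}B_{\widetilde{X}}+\Exc(d))$ is simple normal crossing. Property (i) is then immediate, and (ii) follows because $\Supp B_V\subset d^{-1}_{*}\Supp B_{\widetilde{X}}\cup \Exc(d)$, so $\Supp B_V$ remains a simple normal crossing divisor on $V$.

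The entire difficulty lies in (iii), and the plan is to exploit the functoriality of Theorem \ref{x-thm4.2} recorded in Remark \ref{x-rem4.3}. The key claim to establish is that, \'etale-locally on $Y$, the pair $(\widetilde{X}, B_{\widetilde{X}})$ endowed with its morphism to $Y$ is a product. To this end, I would first treat $(X, B)\to Y$: fix $y\in Y$, choose a regular system of parameters $(t_1,\ldots,t_n)$ at $y$, and use that every stratum of $(X,\Supp B)$ is smooth over $Y$ to extend, via a standard infinitesimal lifting argument, these parameters to local coordinates at each point of $X$ over $y$ that are compatible with the defining equations of $X$ and of each component of $B$. After passing to a suitable \'etale neighborhood $Y^{*}\to Y$ of $y$, this produces an \'etale cover of $X\times_Y Y^{*}$ isomorphic to $X_0\times Y^{*}$, where $X_0$ is the fiber over $y$, with $B\times_Y Y^{*}$ equal to the pullback of a divisor $B_0$ from $X_0$. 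Since the cyclic cover construction commutes with the flat base change $Y^{*}\to Y$, the pair $(\widetilde{X}, B_{\widetilde{X}})\times_Y Y^{*}$ is then \'etale-locally the product $(\widetilde{X}_0, B_{\widetilde{X}_0})\times Y^{*}$.

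Granted this local triviality, I would invoke Remark \ref{x-rem4.3} twice: once for the smooth projection $\widetilde{X}_0\times Y^{*}\to \widetilde{X}_0$, which forces the blow-up sequence produced by Theorem \ref{x-thm4.2} on the product to be itself of product form $V_0\times Y^{*}$; and once for the \'etale cover of $\widetilde{X}\times_Y Y^{*}$, to transport this product resolution back to $V\times_Y Y^{*}$. Every stratum of $(V\times_Y Y^{*}, \Supp B_V)$ then corresponds to a product of a stratum of $(V_0, \Supp B_{V_0})$ with $Y^{*}$ and is in particular smooth over $Y$, which yields (iii). The principal obstacle is establishing the \'etale-local product structure for $(X, B)\to Y$: it is a delicate elaboration of the infinitesimal lifting property of smooth morphisms, and the hypothesis that \emph{every} stratum of $(X, \Supp B)$ is smooth over $Y$ is exactly what makes it possible to choose local coordinates adapted to the full SNC stratification simultaneously.
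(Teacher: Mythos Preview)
Your overall strategy coincides with the paper's: apply Theorem~\ref{x-thm4.2} to $(\widetilde X, B_{\widetilde X})$ to obtain $d$ satisfying (i) and (ii), and then deduce (iii) from the functoriality of Remark~\ref{x-rem4.3} by exhibiting, \'etale-locally, a product structure over $Y$. There is, however, a genuine gap at the cyclic-cover step. You assert that once $(X,B)$ is \'etale-locally a product $X_0\times Y^*$, the pair $(\widetilde X, B_{\widetilde X})$ automatically becomes \'etale-locally $(\widetilde X_0, B_{\widetilde X_0})\times Y^*$ because the cyclic cover commutes with flat base change. But the cyclic cover depends on the chosen rational function $\varphi$ with $b\Delta=\ddiv(\varphi)$, not merely on the divisor $b\Delta$. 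After pulling back along your \'etale chart, $\varphi$ and the product function $\mathrm{pr}_1^*\varphi_0$ have the same divisor and hence differ by a global unit $u$; the two associated $b$-fold covers are isomorphic only when $u$ is a $b$-th power, which there is no reason to expect. The paper confronts precisely this obstacle in its Step~5: it passes to a further $b$-fold \'etale cover $\tau\colon\overline U\to U$ built from the Cartier divisor $\Delta|_U-B|_U$ (so that the obstructing unit acquires a $b$-th root), and only then produces a scheme $U^\dag$ admitting \'etale maps to both $\pi^{-1}(U)$ and the product-type cover $U'$, with matching divisors. Without this extra cover your claim is unjustified, and you have mis-identified the principal obstacle as lying in the $(X,B)$ step rather than here.

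A smaller issue: the localization cannot be taken \'etale-locally on $Y$ with the entire fibre $X_0$ as model, since a smooth morphism is in general only \'etale-locally on the \emph{source} a product. The paper accordingly works pointwise on $X$, mapping an affine neighbourhood $U\subset X$ \'etale to a fixed local model $Z\times L\subset Z\times\mathbb C^n$ (Steps~1--3), and then threads the functoriality of Remark~\ref{x-rem4.3} through the chain of \'etale maps $\pi^{-1}(U)\leftarrow U^\dag\rightarrow U'\rightarrow Z\times N$ in Step~7, checking the component-count hypothesis of Remark~\ref{x-rem4.3} at each stage. Your outline becomes the paper's argument once you localize on $X$ rather than on $Y$ and insert the missing \'etale cover reconciling the two cyclic covers.
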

\begin{proof}
Let us quickly recall the $b$-fold 
cyclic cover $\pi\colon \widetilde X
\to X$. 
We fix a rational function $\phi$ 
on $X$ such that $b\Delta=
\ddiv(\phi).$
As usual, we can define an $\mathcal O_X$-algebra 
structure of 
$\bigoplus _{i=0}^{b-1}\mathcal O_X(\lfloor i\Delta\rfloor)$ 
by $b\Delta={\ddiv}(\phi)$. 
We note that 
$$
\mathcal O_X(\lfloor i\Delta\rfloor)\times 
\mathcal O_X(\lfloor j\Delta\rfloor) \to \mathcal O_X(\lfloor 
(i+j)\Delta\rfloor)
$$ 
is well defined for $0\leq i, j \leq b-1$ by 
$\lfloor i\Delta\rfloor +\lfloor j\Delta\rfloor \leq 
\lfloor (i+j)\Delta\rfloor$ and 
that 
$$
\mathcal O_X(\lfloor (i+j)\Delta\rfloor)\simeq 
\mathcal O_X(\lfloor (i+j-b)\Delta\rfloor)
$$ 
for $i+j\geq b$ defined by the multiplication with $\phi^{-1}$. 
We put 
$$
\pi\colon \widetilde X=\Spec _X 
\bigoplus _{i=0}^{b-1} \mathcal O_X(\lfloor i\Delta\rfloor)
$$ 
and call it a $b$-fold cyclic cover associated to $b\Delta\sim 0$. 
By construction, $\pi\colon\widetilde X\to X$ is \'etale 
outside $\Supp \{\Delta\}$. 
We note that $\widetilde X$ is normal 
over a neighborhood of the generic point of every irreducible 
component of $\Supp \{\Delta\}$. 
We also note that $(\widetilde X, B_{\widetilde X})$ is 
simple normal crossing in codimension one. 
Throughout this proof, we will freely use 
the following commutative diagram. 
\begin{equation*}
\xymatrix{
(X, B)\ar[d]_-f & (\widetilde X, B_{\widetilde X}) \ar[dl]_-{\widetilde f}
\ar[l]_-\pi& (V, B_V)\ar[dll]^-h\ar[l]_-d\\ 
Y & & 
}
\end{equation*}
\setcounter{step}{0}
\begin{step}\label{x-step4.4.1}
Let $U$ and $Z$ be affine open neighborhood 
of $x\in X$ and $y=f(x)\in Y$, respectively. 
Without loss of generality, we may assume that 
$U$ is a simple normal crossing divisor on a smooth 
affine variety $W$ since $(X, B)$ is a simple 
normal crossing pair. 
By shrinking $W$, $U$, and $Z$ suitably, 
we get the following commutative diagram 
$$
\xymatrix{
U \ar[dr]_-{f|_U}\ar@{^{(}->}[r]^-\iota& W \ar[d]^-p\\ 
& Z
}
$$
where $\iota$ is the natural closed embedding $U\hookrightarrow 
W$. 
From now on, we will repeatedly shrink $W$, 
$U$, and $Z$ suitably without mentioning it explicitly. 
Since every stratum of $X$ is smooth over $Y$, 
we may assume that 
$p$ is a smooth morphism between smooth affine varieties. 
\end{step}
\begin{step}\label{x-step4.4.2}
Since $p\colon W\to Z$ is a smooth morphism, 
there exists a commutative diagram 
$$
\xymatrix{
W \ar[rd]_-{p}\ar[r]^-g& Z\times \mathbb C^n\ar[d]^-{p_1}\\ 
& Z
}
$$ 
where $g$ is \'etale and $p_1$ is the first projection 
(see, for example, \cite[Chapter VII, Definition (1.1) and 
Theorem (1.8)]{altman-kleiman}). 
By choosing a coordinate system $(z_1, \ldots, z_n)$ 
of $\mathbb C^n$ suitably and shrinking 
$U$ and $W$ if necessary, 
we may further assume that 
$U$ is defined by a monomial 
$$
x_1\cdots x_p=0
$$ 
on $W$, where $x_i=g^*z_i$ for $1\leq i\leq p$, and 
$$
B|_{U}=\sum _{i=1}^r\alpha _i (y_i=0)|_U \quad \text{with} \quad 
\alpha _i \in \mathbb Q
$$ 
holds, where $y_i=g^*z_{p+i}$ for $1\leq i\leq r$. 
Here, we used the hypothesis 
that every stratum of $(X, \Supp B)$ is smooth over $Y$.
\end{step}
\begin{step}\label{x-step4.4.3}
We put 
$L=(z_1\cdots z_p=0)$ in $\mathbb C^n$. 
Then we have the following commutative diagram. 
$$
\xymatrix{
U \ar[dr]_-{f|_U}\ar[r]^-{g|_U}& Z\ar[d]^-{p_1}\times L \\ 
& Z
}
$$ 
Note that $g|_{U}$ is \'etale because it is 
the base change of $g$ by $L\hookrightarrow \mathbb C^n$. 
We put 
$$D=\sum _{i=1}^r \alpha_i (z_{p+i}=0)$$ on $\mathbb C^n$. 
Let $p_2\colon Z\times \mathbb C^n\to 
\mathbb C^n$ be the second projection. 
Then $B|_{U}=g^*p^*_2D|_U$ holds. 
\end{step}
\begin{step}\label{x-step4.4.4}
Without loss of generality, we may assume that 
$K_Z\sim 0$ by shrinking $Z$ suitably. 
Then $K_U\sim 0$ holds. Hence, by using 
the second projection $p_2\colon Z\times \mathbb C^n 
\to \mathbb C^n$, we have 
$$ 
0\sim 
b\Delta|_U
=b(K_U+B|_U)\sim 
bg|^*_U\bigl(p_2^*D|_{Z\times L}\bigr). 
$$ 
Since $g|_{U}$ is \'etale, we see that all 
the coefficients of $bp_{2}^{*}D|_{Z\times L}$ are integers. 
Since $p_{2}$ is the second projection 
and $D+L$ is a simple normal crossing 
divisor on $\mathbb{C}^{n}$, all the coefficients of $bD$ are integers. 
Therefore, we have $bD\sim0$. 
We fix a rational function $\sigma$ 
on $\mathbb{C}^{n}$ such that $bD=\ddiv(\sigma)$. 
We consider the $b$-fold cyclic cover 
$\alpha\colon M\to \mathbb C^n$ associated to 
$bD=\ddiv(\sigma)$. 
We put $N=\alpha^{-1}L$. 
We define $B_N$ by $K_N+B_N=(\alpha|_{N})^*(K_L+D|_L)$ 
and put $B_{Z\times N}=p^*_2B_N$, 
where $p_2\colon Z\times N\to N$ is the second projection. 
Then we get the following commutative diagram:  
$$
\xymatrix{
U'\ar[d]\ar[r]^-{g'} & Z\times N\ar[d]^-{{\rm id}_Z\times 
(\alpha|_N)}\ar[r]^-{p_2}&N \ar@{^{(}->}[r]\ar[d]^-{\alpha|_N}
&M\ar[d]^-{\alpha} \\ 
U\ar[dr]_-{f|_U} \ar[r]^-{g|_U} &
Z\times L \ar[d]^-{p_1}\ar[r]_-{p_2}&L \ar@{^{(}->}[r]&\mathbb C^n\\ 
& Z
}
$$
where $g'\colon U'\to Z\times N$ is the base change of $g|_U\colon 
U\to Z\times L$ by ${\rm id}_Z\times (\alpha|_N)$. 
We put $B_{U'}=g'^*B_{Z\times N}$. 
Then $K_{U'}+B_{U'}$ is equal to 
the pullback of $K_{U}+B|_{U}$ to $U'$. 
\end{step}
\begin{step}\label{x-step4.4.5}
Since $\alpha\colon M\to \mathbb C^n$ is the 
$b$-fold cyclic cover associated to $bD=\ddiv(\sigma)$, we see that
$$M=\Spec _{\mathbb C^n}
\bigoplus _{i=0}^{b-1} \mathcal O_{\mathbb C^n}(\lfloor iD\rfloor).$$
Since $p_{2}\circ g\colon W\to Z\times \mathbb C^n\to \mathbb C^{n}$ 
is the composition of an \'etale morphism and 
the second projection, we have $g^{*}p_{2}^{*}\lfloor iD\rfloor|_{U}
=\lfloor ig^{*}p_{2}^{*}D\rfloor|_{U}=\lfloor iB\rfloor|_{U}
=\lfloor iB|_{U}\rfloor$, where the last equality follows 
from that $(U,B|_{U})$ is a simple normal crossing pair. 
Let $\sigma_{U}$ be a rational function 
on $U$ which is the pullback of $\sigma$.  
Then $bB|_{U}=\ddiv(\sigma_{U})$ because we have $bD=\ddiv(\sigma)$.  
By the construction of $U'\to U$, we see that
$$U'=\Spec _{U}
\bigoplus _{i=0}^{b-1} \mathcal O_U(\lfloor iB|_{U}\rfloor)$$
and $U'\to U$ is the $b$-fold cyclic cover 
associated to $bB|_{U}=\ddiv(\sigma_{U})$. 

We recall that $\Delta=K_{X}+B$ 
and $\widetilde X\to X$ is the $b$-fold cyclic 
cover associated to $b\Delta=\ddiv(\phi)$. 
We put $\phi_{U}$ as the restriction of $\phi$ to $U$.
Then, the morphism $\pi^{-1}(U)\to U$ is 
the $b$-fold cyclic cover associated to $b\Delta|_{U}=\ddiv(\phi_{U})$. 
Now $\Delta|_{U}-B|_{U}$ is a Cartier 
divisor on $U$ and $b(\Delta|_{U}-B|_{U})=
\ddiv(\phi_{U}\cdot\sigma_{U}^{-1})$.  
With this relation, we construct a $b$-fold 
cyclic cover $\tau\colon \overline{U}\to U$. 
Then $\tau$ is \'etale, $\tau^{*}(\Delta|_{U}-B|_{U})$ 
is Cartier and $\tau^{*}(\Delta|_{U}-B|_{U})\sim 0$. 
So there exists a rational function $\xi$ 
on $\overline{U}$ such that $\xi^{b}
=\tau^*(\phi_U\cdot \sigma^{-1}_U)$, 
equivalently, $\tau^{*}(\Delta|_{U}-B|_{U})
=\ddiv(\xi)$. 
From this, the $b$-fold cyclic cover 
$U^\dag _1\to \overline U$ associated to $b\tau^{*}\Delta|_{U}=
\ddiv(\tau^{*}\phi_{U})$ is isomorphic to 
the $b$-fold cyclic cover $U^\dag_2\to \overline U$ 
associated to $b\tau^{*}B|_{U}
=\ddiv(\tau^{*}\sigma_{U})=
\ddiv(\tau^{*}\phi_{U}\cdot \xi^{-b})$. 
Since $\tau\colon \overline{U}\to U$ 
is \'etale, the construction of $U^\dag_2$ 
shows that $U_2^\dag \to \overline{U}$ is 
the base change of $U'\to U$ by $\overline{U}\to U$. 
Similarly, we see that $U^\dag_1 \to \overline{U}$ 
is the base change of $\pi^{-1}(U)\to U$ by $\overline{U}\to U$. 
$$
\xymatrix{
U^\dag_2 \ar[d]\ar[r]^-{a_2}& U' \ar[d]&&  U^\dag_1 
\ar[d]\ar[r]^-{a_1}& \pi^{-1}(U) \ar[d]\\ 
\overline{U} \ar[r]_-\tau 
&U \ar@{}[lu]|{\Box}&&  \overline{U} \ar[r]_-\tau &U \ar@{}[lu]|{\Box}
}
$$
We put $a_1\colon U^\dag_1\to \pi^{-1}(U)$ and $a_2\colon U^\dag_2\to U'$. 
By construction, $a_1$ and $a_2$ 
are \'etale. 
We see that 
the composition 
$U^\dag_1 \to \pi^{-1}(U)\to U$ 
is isomorphic to the composition 
$U^\dag_2 \to U'\to U$ by construction. 
By this isomorphism, we obtain that 
$a_1^*(B_{\widetilde X}|_{\pi^{-1}(U)})$ is 
isomorphic to $a_2^*B_{U'}$. 

In this way, there exist \'etale morphisms $a\colon U^\dag\to \pi^{-1}(U)$ 
and $a'\colon U^\dag\to U'$ over $Z$ 
such that $U^\dag_1\simeq 
U^\dag\simeq U^\dag_2$ 
with the following commutative diagram: 
$$
\xymatrix{
&U^\dag \ar[dl]_-a\ar[dr]^-{a'}& \\ 
\pi^{-1}(U)  \ar[d]_-{\pi} & & U' \ar[d] \\ 
U \ar@{=}[rr]&  &U
}
$$
such that $a^*(B_{\widetilde X}|_{\pi^{-1}(U)})=a'^*B_{U'}$. 
\end{step}
\begin{step}\label{x-step4.4.6}
We apply \cite[Theorem 1.4]{bvp} (see Theorem \ref{x-thm4.2}) 
to the pair 
$(\widetilde X, B_{\widetilde X})$. 
Then we obtain a morphism $d\colon V\to \widetilde X$ given 
by a composite of blow-ups satisfying (i) and (ii). 
Hence, all we have to do is to check that 
$d\colon V\to \widetilde X$ satisfies (iii). 
\end{step}
\begin{step}\label{x-step4.4.7}
Recall that $B_{Z\times N}=p^*_2B_N$, 
where $p_2\colon Z\times N \to N$, and $B_{U'}=g'^*B_{Z\times N}$. 
Recall also the relation $a^*(B_{\widetilde X}|_{\pi^{-1}(U)})=a'^*B_{U'}$. 
We apply \cite[Theorem 1.4]{bvp} (see Theorem \ref{x-thm4.2}) to 
$N$ and $B_N$, and we obtain a morphism 
$\beta\colon N'\to N$ given by a composition of 
blow-ups. 
We apply \cite[Theorem 1.4]{bvp} again to $Z\times N$ and $B_{Z \times N}$. 
Then we get a morphism 
${\rm id}_Z\times \beta\colon Z\times N'\to Z\times 
N$ by the functoriality of \cite[Theorem 1.4]{bvp} 
(see Remark \ref{x-rem4.3}). 
We put $\widehat{V}=d^{-1}(\pi^{-1}(U)) \subset V$, and 
we apply \cite[Theorem 1.4]{bvp} to the pair of $U^\dag$ 
and $a^*(B_{\widetilde X}|_{\pi^{-1}(U)})$, and the pair of $U'$ and $B_{U'}$. 
Then we obtain morphisms $V^\dag \to U^\dag$ and $V'\to U'$. 

We check that we may apply the 
functoriality (see Remark \ref{x-rem4.3}) to
 the morphisms 
 $$\text{$g'\colon U' \to Z\times N$, $a'\colon U^{\dag} \to U'$, 
 and $a\colon U^{\dag} \to \pi^{-1}(U)$}$$
  (see the diagram in the next paragraph) and divisors 
$$\text{$B_{Z\times N}$ on $Z\times N$, 
$B_{U'}$ on $U'$, and 
$B_{\widetilde X}|_{\pi^{-1}(U)}$ on $\pi^{-1}(U)$ and their pullbacks.}$$ 
We only check the second condition of 
Remark \ref{x-rem4.3} for schemes 
because the case of divisors can be proved by the same way.
By construction, $g'$ is the base change 
of $g|_{U}\colon U \to Z\times L$ by the 
morphism $Z\times N \to Z \times L$. 
Because $Z \times L$ is a simple normal 
crossing divisor on $Z\times \mathbb C^n$ 
and $g|_{U}$ is \'etale, 
by arguing locally, we see that $g|_{U}$ 
satisfies the second condition of Remark \ref{x-rem4.3}. 
Then so does $g'$ since $g'$ is constructed by the base change of $g|_{U}$. 
Similarly, $a'$ (resp.~$a$) is constructed 
with the base change of 
$\tau\colon \overline{U}\to U$ by 
$U' \to U$ (resp.~$\pi^{-1}(U)\to U$), 
and $U$ is a simple normal crossing divisor 
on $W$. 
Thus, the same argument as above 
implies that $a'$ and $a$ satisfy the 
second condition of Remark \ref{x-rem4.3}. 
Thus, we may apply the 
functoriality (see Remark \ref{x-rem4.3}) to the above morphisms and divisors. 

Applying the functoriality (see Remark \ref{x-rem4.3}), we have 
the following diagram: 
$$
\xymatrix{
\widehat{V} \ar[d]_-{d|_{\widehat{V}}}& 
V^\dag \ar[l]\ar[d]\ar[r]&V' \ar[d]\ar[r]&Z\times N' \ar[d]\\ 
\pi^{-1}(U) &U^\dag \ar[l]^-{a}\ar[r]_-{a'}\ar@{}[lu]|{\square}
&U'\ar[r]_-{g'}\ar@{}[lu]|{\square}&Z\times N, \ar@{}[lu]|{\square}
}
$$
where each square is the fiber product. 
By construction, all the upper horizontal morphisms are \'etale. 
Let $B_{V^\dag}$ (resp.~$B_{\widehat{V}}$) be the sum 
of the birational transform of $a'^*B_{U'}$ 
(resp.~$B_{\widetilde X}|_{\pi^{-1}(U)}$) and the 
exceptional locus of 
$V^\dag \to U^\dag$ (resp.~$\widehat{V}\to \pi^{-1}(U)$). 
Then, every stratum of $(V^\dag, \Supp B_{V^\dag})$ is 
smooth over $Z$. 
Since each 
irreducible component of $V^\dag$ 
is smooth over $Z$ 
and $V^\dag \to \widehat{V}$ 
is \'etale, we see that each irreducible component 
of $\widehat{V}$ is smooth over $Z$. 
By a similar argument, we see that every stratum 
of $(\widehat{V}, \Supp B_{\widehat{V}})$ is smooth over $Z$. 
This implies that $d\colon V\to \widetilde X$ satisfies (iii). 
\end{step}
We finish the proof of Lemma \ref{x-lem4.4}. 
\end{proof}

Before we start the proof of Theorem \ref{x-thm4.1}, 
we make an important remark on \cite{fujino-slc-trivial}. 

\begin{rem}[see Remark \ref{x-rem3.3}]\label{x-rem4.5}
In Theorem \ref{x-thm4.1}, 
we can write 
$$
K_X+B+\frac{1}{b}\ddiv(\varphi)=f^*D
$$ 
for some positive integer 
$b$ and a rational function $\varphi\in \Gamma (X, \mathcal K^*_X)$, 
where $\mathcal K_X$ is the sheaf of total quotient rings of 
$\mathcal O_X$ and $\mathcal K^*_X$ denotes 
the sheaf of invertible elements in $\mathcal K_X$, such that $b(K_X+B-f^*D)\sim 0$. 
In general, $b$ is larger than $b(F, B_F)$ in \cite[Section 6]{fujino-slc-trivial}. 
We take a $b$-fold cyclic cover $\pi\colon \widetilde X\to X$ associated 
to $b\Delta\sim 0$, where 
$\Delta=K_X+B-f^*D$, as in \cite[Section 6]{fujino-slc-trivial}. 
Then the general fiber of $h\colon V\to Y$ 
is not necessarily connected in \cite[Section 6]{fujino-slc-trivial}. 
Moreover, $V$ is not necessarily connected. 
This means that \cite[Proposition 6.3 (ii)]{fujino-slc-trivial} 
does not hold true since the 
natural map $\mathcal O_Y\to h_*\mathcal O_V$ is not always 
an isomorphism. 
Fortunately, the condition $h_*\mathcal O_V\simeq 
\mathcal O_Y$ is not necessary for the proof of the 
other properties of \cite[Proposition 6.3]{fujino-slc-trivial}. 
We note that 
the condition $h_*\mathcal O_V\simeq \mathcal O_Y$ is unnecessary in 
\cite[Lemma 7.3 and Theorem 8.1]{fujino-slc-trivial}. 
Hence it 
may be better to remove the condition $f_*\mathcal O_X\simeq 
\mathcal O_Y$ from (2) in Definition \ref{x-def3.2}. 

Let $b$ be the smallest positive integer such that 
$b(K_X+B-f^*D)\sim 0$. 
Then we can write 
$$
K_X+B+\frac{1}{b}\ddiv (\varphi)=f^*D. 
$$ 
As usual, we consider 
the $b$-fold cyclic cover 
$\pi\colon \widetilde X\to X$ associated to 
$b\Delta=\ddiv(\varphi^{-1})$, 
where $\Delta=K_X+B-f^*D$. 
Let $b^\sharp$ be any positive integer with $b^\sharp \geq 2$. We put 
$\varphi^\sharp=\varphi^{b^\sharp}$. Then we get 
$$
K_X+B+\frac{1}{bb^\sharp} \ddiv (\varphi ^\sharp)=f^*D. 
$$ 
Let $\pi^\sharp \colon X^\sharp\to X$ be the $bb^\sharp$-fold cyclic cover 
associated to $b b^\sharp \Delta=\ddiv\left((\varphi^\sharp)^{-1}\right)$. 
We take the $H$-invariant part of 
$\pi^\sharp \colon X^\sharp\to X$, where $H$ is the 
subgroup of the Galois 
group $\Gal(X^\sharp /X)\simeq \mathbb Z/bb^\sharp\mathbb Z$ 
of $\pi^\sharp\colon X^\sharp\to X$ 
corresponding to $b\mathbb Z /b b^\sharp \mathbb Z$. 
Then we can recover $\pi\colon \widetilde X\to X$. 
Note that $\pi^\sharp\colon X^\sharp \to X$ is decomposed 
into $b^\sharp$ components and that each 
component is isomorphic to $\pi\colon \widetilde X\to X$. 
\end{rem}

Let us prove Theorem \ref{x-thm4.1}. 

\begin{proof}[Proof of Theorem \ref{x-thm4.1}]
Here, we only explain how to modify the proof of 
\cite[Theorem 1.2]{fujino-slc-trivial} by using Lemma \ref{x-lem4.4}. 

By taking a completion as in \cite[Lemma 4.12]{fujino-slc-trivial}, 
we may further assume that $Y$ is projective. 
By Lemma \ref{x-lem4.4}, we can construct a 
commutative diagram (6.4) in 
\cite[Section 6]{fujino-slc-trivial} 
satisfying (a)--(g) such that 
$\Sigma_Y=\Sigma$ holds without 
taking birational modifications of $Y$. 
Here, we do not require the condition $\Supp M_Y\subset 
\Supp \Sigma_Y$ in (d) in \cite[Section 6]{fujino-slc-trivial} 
(see Remark \ref{x-rem3.8}). 
We also do not require the condition 
that the general fiber of $h\colon V\to Y$ is connected (see 
Remark \ref{x-rem4.5}). 
The covering arguments and \cite[Proposition 6.3]{fujino-slc-trivial} 
work without any modifications. 
We note that $Y$ is a smooth projective variety. 
In what follows, we apply the proof of 
\cite[Theorem 8.1]{fujino-slc-trivial}.   
Let $\gamma \colon Y' \to Y$ be a projective 
birational morphism from a normal variety $Y'$. 
By replacing $Y'$ with a higher model if 
necessary, we may assume that $Y'$ is smooth 
and that $\gamma ^{-1}\Sigma_Y$ is a simple 
normal crossing divisor on $Y'$. 
With \cite[Lemma 7.3]{fujino-slc-trivial}, we 
construct $\tau\colon \overline{Y}\to Y$ a 
unipotent reduction of the local monodromies around $\Sigma_{Y}$. 
Then the induced 
fibration 
over $\overline{Y}$ satisfies \cite[Proposition 6.3 (iv), (v)]{fujino-slc-trivial}.
As in the proof of 
\cite[Theorem 8.1]{fujino-slc-trivial}, we get a diagram: 
$$
\xymatrix{
\overline{Y} \ar[d]_{\tau}& \overline{Y}'\ar[l]_{\gamma'}\ar[d]^{\tau'}\\ 
Y &Y'\ar[l]^{\gamma} 
}
$$
such that $\tau'$ is finite and the induced 
fibration over $\overline{Y}'$ satisfies 
\cite[Proposition 6.3 (iv), (v)]{fujino-slc-trivial}.
By \cite[Theorem 3.1]{fujino-slc-trivial}, we see that 
$\mathbf M_{\overline{Y}}$ is a nef Cartier divisor 
and $\gamma'^{*}\mathbf M_{\overline{Y}}=\mathbf M_{\overline{Y}'}$. 
Moreover, we have $\tau^{*}\mathbf M_{Y}
=\mathbf M_{\overline{Y}}$ and 
$\tau'^{*}\mathbf M_{Y'}=\mathbf M_{\overline{Y}'}$ 
because $\tau$ and $\tau'$ are both 
finite (see \cite[Lemma 4.10]{fujino-slc-trivial}). 
Thus, we have that $\mathbf M_{Y}$ is a 
nef $\mathbb{Q}$-divisor and 
$\gamma^{*}\mathbf M_{Y}=\mathbf M_{Y'}$. This 
is Theorem \ref{x-thm4.1} (ii). 
Theorem \ref{x-thm4.1} (i) immediately follows from 
Theorem \ref{x-thm4.1} (ii). 
So we are done. 
\end{proof}

\section{Fundamental theorem for basic $\mathbb R$-slc-trivial 
fibrations}\label{x-sec5}

In this section, we will establish the following 
fundamental theorem for basic $\mathbb R$-slc-trivial 
fibrations. 

\begin{thm}[see Theorem \ref{x-thm1.8}]\label{x-thm5.1}
Let $f\colon (X, B)\to Y$ be a 
basic $\mathbb R$-slc-trivial fibration such 
that $Y$ is a smooth quasi-projective variety. 
We write 
$K_X+B\sim _{\mathbb R} f^*D$. 
Assume that 
there exists a simple normal crossing divisor 
$\Sigma$ on $Y$ such that 
$\Supp D\subset \Sigma$ and 
that every stratum of $(X, \Supp B)$ is 
smooth over $Y\setminus \Sigma$. 
Then 
\begin{itemize}
\item[(i)] $\mathbf K +\mathbf B=
\overline {\mathbf K_Y+\mathbf B_Y}$ holds, and 
\item[(ii)] $\mathbf M_Y$ is a potentially 
nef $\mathbb R$-divisor 
on $Y$ with $\mathbf M=\overline {\mathbf M_Y}$. 
\end{itemize}
\end{thm}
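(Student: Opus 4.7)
The plan is to reduce Theorem~\ref{x-thm5.1} to its rational counterpart Theorem~\ref{x-thm4.1} by a convex-decomposition argument over the rationals. Since $K_X+B\sim_{\mathbb R} f^*D$, there exist finitely many rational functions $\varphi_{j}\in \Gamma(X,\mathcal K_X^*)$ and real numbers $r_{j}$ with
$$K_X+B-f^*D=\sum_{j} r_{j}\,\ddiv(\varphi_{j}).$$
First I would produce a finite decomposition
$$B=\sum_{k=1}^{N} t_{k}B_{k}, \qquad D=\sum_{k=1}^{N} t_{k}D_{k},\qquad t_{k}>0, \ \sum_{k} t_{k}=1,$$
where each $B_{k}$ is a $\mathbb Q$-divisor with $\Supp B_{k}\subseteq \Supp B$ and $B_{k}^{=1}=B^{=1}$, each $D_{k}$ is a $\mathbb Q$-divisor with $\Supp D_{k}\subseteq \Sigma$, and $K_X+B_{k}\sim_{\mathbb Q} f^*D_{k}$ holds for every $k$. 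The existence is the standard rational polytope argument: pairs $(B',D')$ with the prescribed supports and with $K_X+B'-f^*D'$ a fixed $\mathbb Q$-linear combination of the $\ddiv(\varphi_{j})$ form a rational affine subspace of an $\mathbb R$-vector space, and the further conditions (coefficients in $[0,1]$ over the generic point of $Y$, same $B^{=1}$-part, etc.) cut out a rational polytope containing $B$. Choosing the vertices close to $B$ preserves all the remaining hypotheses of Theorem~\ref{x-thm4.1}: the simple normal crossing structure of $(X,\Supp B_{k})$ is inherited from $(X,\Supp B)$, the smoothness of every stratum of $(X,\Supp B_{k})$ over $Y\setminus \Sigma$ follows from the same property for $(X,\Supp B)$, and the rank condition $\rank f_{*}\mathcal O_X(\lceil -(B_{k}^{<1})\rceil)=1$ is an open condition in the coefficients and holds for $B$.

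Next I would apply Theorem~\ref{x-thm4.1} to each basic $\mathbb Q$-slc-trivial fibration $(X,B_{k})\to Y$ to obtain, for each $k$, discriminant and moduli $\mathbb Q$-b-divisors $\mathbf B^{(k)}$ and $\mathbf M^{(k)}$ satisfying
$$\mathbf K+\mathbf B^{(k)}=\overline{\mathbf K_Y+\mathbf B_Y^{(k)}},\qquad \mathbf M^{(k)}=\overline{\mathbf M_Y^{(k)}},$$
with $\mathbf M_Y^{(k)}$ a potentially nef $\mathbb Q$-divisor on $Y$.

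The remaining core step, which I expect to be the main obstacle, is to verify that on every higher model $\sigma\colon Y'\to Y$ the identities
$$\mathbf B_{Y'}=\sum_{k} t_{k}\mathbf B_{Y'}^{(k)}, \qquad \mathbf M_{Y'}=\sum_{k} t_{k}\mathbf M_{Y'}^{(k)}$$
hold. Unwinding the definitions, the identity for $\mathbf B$ reduces, for each prime divisor $P$ over $Y$, to the equality of log canonical thresholds $b_{P}=\sum_{k} t_{k}\,b_{P}^{(k)}$, where $b_{P}$ and $b_{P}^{(k)}$ are computed from $(X^\nu,B^\nu)$ and $(X^\nu,B_{k}^\nu)$ respectively. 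Because every stratum of $(X,\Supp B)$ is smooth over $Y\setminus\Sigma$ (a property passing to the base change to $Y'$), these thresholds can be computed locally by explicit simple normal crossing formulas of the form $\min_i (1-c_i)/m_i$; in particular $b_P$ is a concave piecewise linear function of the coefficient vector of $B$. Since the $B_{k}$ may be chosen in a small rational polyhedral neighborhood of $B$ on which this piecewise linear function is actually linear, the desired identity for $\mathbf B_{Y'}$ holds, and the corresponding identity $\mathbf M_{Y'}=\sigma^*D-\mathbf K_{Y'}-\mathbf B_{Y'}$ follows immediately by linearity. Conclusion~(i) then follows by summing, and (ii) follows because a positive $\mathbb R$-linear combination of potentially nef $\mathbb Q$-divisors is, by definition, a potentially nef $\mathbb R$-divisor, so $\mathbf M_Y=\sum_k t_k \mathbf M_Y^{(k)}$ is potentially nef and agrees with $\mathbf M$ as an $\mathbb R$-b-divisor.
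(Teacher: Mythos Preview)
Your overall strategy---write $B$ as a convex combination $\sum_k t_k B_k$ of $\mathbb Q$-divisors, apply Theorem~\ref{x-thm4.1} to each $(X,B_k)\to Y$, and sum---is exactly the paper's approach. The paper packages the decomposition step as Lemma~\ref{x-lem5.4} and the linearity-of-thresholds step as Lemma~\ref{x-lem5.5}.

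There is, however, a subtle gap in your handling of the core step. You fix the $B_k$ \emph{once} and then assert $\mathbf B_{Y'}=\sum_k t_k \mathbf B^{(k)}_{Y'}$ for \emph{every} higher model $\sigma\colon Y'\to Y$. Your justification is that each $b_P$ is a concave piecewise-linear function of the coefficient vector and that the $B_k$ can be chosen inside one linear cell. But that cell is determined by the combinatorics of the induced fibration over $Y'$: to apply your formula $b_P=\min_i(1-c_i)/m_i$ one must first resolve $X\times_Y Y'$ to a simple normal crossing pair $(X',B_{X'})$, and the set of components $P_i$ over $P$ together with their multiplicities $m_i$ depends on $Y'$ and on the chosen $X'$. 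As $Y'$ ranges over all higher models you get infinitely many piecewise-linear functions, and there is no a priori reason their linear cells at $B$ have a common rational interior point other than $B$ itself. Your parenthetical that the smoothness over $Y\setminus\Sigma$ ``passes to the base change to $Y'$'' only controls prime divisors $P$ whose image meets $Y\setminus\Sigma$; it says nothing about exceptional $P$ lying over $\Sigma$.

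The paper avoids this by letting the decomposition depend on $Y'$. For each fixed $Y'$ it applies Lemma~\ref{x-lem5.5} to the induced fibration $f'\colon(X',B_{X'})\to Y'$, obtaining $\Delta_j$ and weights $s_j$ (depending on $Y'$) with $\mathbf B_{Y'}=\sum_j s_j\mathbf B^{\Delta_j}_{Y'}$; the $\Delta_j$ are rational, so Theorem~\ref{x-thm4.1} gives $\sigma^*(K_Y+\mathbf B^{\Delta_j}_Y)=K_{Y'}+\mathbf B^{\Delta_j}_{Y'}$, and summing (using $\mathbf B_Y=\sigma_*\mathbf B_{Y'}=\sum_j s_j\mathbf B^{\Delta_j}_Y$) yields $\sigma^*(K_Y+\mathbf B_Y)=K_{Y'}+\mathbf B_{Y'}$. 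Since $Y'$ is arbitrary, (i) and (ii) follow. Your intuition that a \emph{single} decomposition ultimately works is correct---this is recorded as Theorem~\ref{x-thm5.6}---but in the paper it is deduced \emph{from} Theorem~\ref{x-thm5.1}, not used to prove it.
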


By Theorem \ref{x-thm5.1}, which is 
obviously a generalization of Theorem \ref{x-thm4.1}, 
we can use the theory of 
basic slc-trivial fibrations in \cite{fujino-slc-trivial} and 
\cite{fujino} for $\mathbb R$-divisors. 
The following formulation may be useful. Hence 
we state it explicitly here for the reader's convenience. 
We note that if $f\colon (X, B)\to Y$ is a 
basic $\mathbb Q$-slc-trivial 
fibration then Corollary \ref{x-cor5.2} is nothing but 
\cite[Theorem 1.2]{fujino-slc-trivial}. 

\begin{cor}[{\cite[Theorem 1.2]{fujino-slc-trivial}}]\label{x-cor5.2}
Let $f\colon (X, B)\to Y$ be a basic 
$\mathbb R$-slc-trivial fibration and 
let $\mathbf B$ and $\mathbf M$ be the 
discriminant and moduli $\mathbb R$-b-divisors associated 
to $f\colon (X, B)\to Y$, respectively. 
Then we have the following properties: 
\begin{itemize}
\item[(i)] $\mathbf K+\mathbf B$ is $\mathbb R$-b-Cartier, 
where $\mathbf K$ is the canonical 
b-divisor of $Y$, and 
\item[(ii)] $\mathbf M$ is b-potentially nef, that is,  
there exists a proper birational morphism $\sigma\colon Y'\to Y$ 
from a normal variety $Y'$ such that 
$\mathbf M_{Y'}$ is a potentially nef $\mathbb R$-divisor on $Y'$ and 
that $\mathbf M=\overline{\mathbf M_{Y'}}$ holds. 
\end{itemize}
\end{cor}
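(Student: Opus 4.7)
The strategy is to reduce Corollary \ref{x-cor5.2} to Theorem \ref{x-thm5.1} (equivalently, Theorem \ref{x-thm1.8}) via a sufficiently high birational base change, using that the discriminant and moduli $\mathbb R$-b-divisors $\mathbf B$ and $\mathbf M$ are intrinsic b-divisor invariants (cf.\ Definition \ref{x-def3.5}), so their traces on any particular model determine them on $Y$. Thus we are free to move to a model where the hypotheses of Theorem \ref{x-thm5.1} hold.

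First I would fix the $\mathbb R$-Cartier $\mathbb R$-divisor $D$ on $Y$ with $K_X+B\sim_{\mathbb R}f^*D$ coming from the definition of basic $\mathbb R$-slc-trivial fibration. By Hironaka's resolution of singularities, take a projective birational morphism $\sigma\colon Y'\to Y$ from a smooth quasi-projective variety $Y'$ together with a simple normal crossing divisor $\Sigma$ on $Y'$ such that $\Supp \sigma^*D\subset \Sigma$ and $\mathrm{Exc}(\sigma)\subset \Sigma$. Form the induced basic $\mathbb R$-slc-trivial fibration $f'\colon (X',B_{X'})\to Y'$ as in Definition \ref{x-def3.4}; by construction $(X',B_{X'})$ is a simple normal crossing pair, every stratum of $X'$ dominates $Y'$, and $K_{X'}+B_{X'}\sim_{\mathbb R}f'^*\sigma^*D$.

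Next, I would arrange the generic smoothness condition over $Y'\setminus\Sigma$. By generic smoothness together with further blow-ups of $X'$ supported in the inverse image of $\Sigma$ (via a standard log-resolution argument, as used in Section 6 of \cite{fujino-slc-trivial}; see Remarks \ref{x-rem3.8} and \ref{x-rem3.9}), we may replace $(X',B_{X'})$ by a blow-up so that every stratum of $(X',\Supp B_{X'})$ is smooth over $Y'\setminus \Sigma$, while keeping $f'\colon (X',B_{X'})\to Y'$ a basic $\mathbb R$-slc-trivial fibration with the same associated b-divisors $\mathbf B$ and $\mathbf M$ (since these depend only on the birational equivalence class of the top, and the base is unchanged here). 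Now $f'\colon (X',B_{X'})\to Y'$ satisfies all the hypotheses of Theorem \ref{x-thm5.1} with $\Sigma$ as chosen.

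Applying Theorem \ref{x-thm5.1} yields $\mathbf K+\mathbf B=\overline{\mathbf K_{Y'}+\mathbf B_{Y'}}$ and $\mathbf M=\overline{\mathbf M_{Y'}}$ with $\mathbf M_{Y'}$ potentially nef on the normal (in fact smooth) variety $Y'$. The first equality immediately gives assertion (i), since $\mathbf K_{Y'}+\mathbf B_{Y'}$ is $\mathbb R$-Cartier and $\mathbf K+\mathbf B$ is its $\mathbb R$-Cartier closure under $\sigma$; the second gives assertion (ii) directly from the definition of b-potentially nef (Definition \ref{x-def2.6}). The main obstacle is Step 2, namely justifying the smoothening of strata over $Y'\setminus \Sigma$ by blow-ups of $X'$ without disturbing the basic $\mathbb R$-slc-trivial fibration structure or the discriminant/moduli b-divisors; once this is in place, the rest is a direct invocation of Theorem \ref{x-thm5.1}.
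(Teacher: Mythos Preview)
Your approach is correct and is precisely how the paper (implicitly) derives Corollary \ref{x-cor5.2} from Theorem \ref{x-thm5.1}; the paper gives no separate proof, and the same one-line reduction (``take a smooth higher model $\sigma\colon Y'\to Y$ so that the induced basic $\mathbb R$-slc-trivial fibration $f'\colon (X',B')\to Y'$ admits a simple normal crossing divisor $\Sigma'$ on $Y'$ with $\Supp\sigma^*D\subset\Sigma'$ and every stratum of $(X',\Supp B')$ smooth over $Y'\setminus\Sigma'$'') appears verbatim in the sketch of proof of Theorem \ref{x-thm5.6}.

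There is, however, an error in the execution of your Step 3. Blowing up $X'$ only over $f'^{-1}(\Sigma)$ cannot make strata of $(X',\Supp B_{X'})$ smooth over $Y'\setminus\Sigma$ if they were not already smooth there, since such blow-ups leave $X'$ unchanged over $Y'\setminus\Sigma$. The correct fix is upstream, in Step 1. By generic smoothness applied to each stratum of $(X,\Supp B)$ over $Y$, there is a proper closed subset $Z\subset Y$ outside of which all these strata are smooth over $Y$. Now choose $\sigma\colon Y'\to Y$ (using Chow's lemma together with Hironaka to ensure $Y'$ is smooth and quasi-projective) so that $\sigma^{-1}(Z)\cup\Supp\sigma^*D\cup\Exc(\sigma)$ is contained in a simple normal crossing divisor $\Sigma$ on $Y'$. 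Over $Y'\setminus\Sigma$ the map $\sigma$ is then an isomorphism onto an open subset of $Y\setminus Z$, so $X\times_YY'$ coincides there with the corresponding open piece of $X$ and is already simple normal crossing with $B$; hence in the construction of Definition \ref{x-def3.4} one may take $(X',B_{X'})$ to agree with $(X,B)$ over $Y'\setminus\Sigma$. With this choice every stratum of $(X',\Supp B_{X'})$ is automatically smooth over $Y'\setminus\Sigma$, and no further modification of $X'$ is needed. After this adjustment the rest of your argument goes through unchanged.
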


\begin{rem}[{see \cite[Corollary 1.4]{fujino-fujisawa-liu}}]\label{x-rem5.3}
In Theorem \ref{x-thm5.1} and Corollary \ref{x-cor5.2}, 
we can easily see that 
$\mathbf M_Y$ is semi-ample when $Y$ is a curve by 
Theorem \ref{x-thm3.7} and Lemma \ref{x-lem5.4} below. 
\end{rem}

Let us start with an easy lemma. 

\begin{lem}\label{x-lem5.4}
Let $f\colon (X, B)\to Y$ be a basic 
$\mathbb R$-slc-trivial fibration with 
$K_X+B\sim _\mathbb R f^*D$. 
Then there exist a $\mathbb Q$-divisor 
$B_i$ on $X$, a $\mathbb Q$-Cartier $\mathbb Q$-divisor 
$D_i$ on $Y$, and a positive 
real number $r_i$ for $1\leq i\leq k$ such that 
\begin{itemize}
\item[(1)] $\sum _{i=1}^k r_i=1$ with 
$\sum _{i=1}^k r_i B_i=B$ and 
$\sum _{i=1}^k r_i D_i=D$, 
\item[(2)] $\Supp B=\Supp B_i$, $\lfloor B^{>1}\rfloor =
\lfloor B^{>1}_i\rfloor$, and 
$\lceil -(B^{<1})\rceil=\lceil -(B^{<1}_i)\rceil$ hold 
for every $i$, 
\item[(3)] if $\coeff _S (B)\in \mathbb Q$ for a prime 
divisor $S$ on $X$, 
then $\coeff _S (B)=\coeff _S (B_i)$ holds for 
every $i$, 
\item[(4)] $\Supp D=\Supp D_i$ holds for every $i$, 
\item[(5)] if $\coeff _T(D)\in \mathbb Q$ for a 
prime divisor $T$ on $Y$, then $\coeff _T(D)= 
\coeff _T (D_i)$ holds for 
every $i$, and 
\item[(6)] $K_X+B_i\sim _{\mathbb Q} f^*D_i$ holds for every $i$. 
\end{itemize}
In particular, $f\colon (X, B_i)\to Y$ is a 
basic $\mathbb Q$-slc-trivial fibration 
with $K_X+B_i\sim 
_{\mathbb Q} f^*D_i$ for every $i$. 
Moreover, if $t_1, \ldots, t_k$ are real numbers 
such that $0\leq t_i\leq 1$ for every $i$ with $\sum _{i=1}^k t_i=1$, 
then $f\colon \left(X, \sum _{i=1}^k t_i B_i\right)\to Y$ is a 
basic $\mathbb R$-slc-trivial fibration with $K_X+\sum _{i=1}^k t_i B_i\sim 
_{\mathbb R} f^*\left(\sum _{i=1}^k t_i D_i\right)$. 
\end{lem}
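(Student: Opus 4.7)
My plan is to reduce to the already-known basic $\mathbb Q$-slc-trivial case by approximating $(B,D)$ by rational points of a suitable affine $\mathbb Q$-subspace and then extracting a convex decomposition. First I would witness the $\mathbb R$-linear equivalence $K_X+B\sim_{\mathbb R} f^*D$ by an explicit identity
\[
K_X + B - f^*D \;=\; \sum_{j=1}^{N} s_j\,\ddiv(\varphi_j)
\]
with $s_j\in\mathbb R$ and $\varphi_j\in\mathbb C(X)^*$. Since $D$ is $\mathbb R$-Cartier, I would also fix Cartier divisors $E_1,\dots,E_m$ on $Y$ and real numbers $c_l$ with $D=\sum_{l} c_l E_l$, so that $f^*D=\sum_{l} c_l f^*E_l$ is an honest $\mathbb R$-divisor on $X$.

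Next I would work in the finite-dimensional real vector space $U$ parametrizing tuples $\bigl((b'_k),(c'_l),(s'_j)\bigr)$, where $B'=\sum_{k} b'_k P_k$ with $\{P_k\}$ a fixed finite list of prime divisors on $X$ containing $\Supp B$ and every component of every $f^*E_l$ and every $\ddiv(\varphi_j)$. The equation $K_X+B'-\sum_l c'_l f^*E_l=\sum_j s'_j\,\ddiv(\varphi_j)$ is a system of $\mathbb Q$-linear conditions on these coordinates (one per prime divisor on $X$). To this I would add the $\mathbb Q$-linear constraints $b'_k=b_k$ whenever $b_k\in\mathbb Q$, and $\coeff_T(\sum_l c'_l E_l)=\coeff_T(D)$ for every prime divisor $T$ on $Y$ with $\coeff_T(D)\in\mathbb Q$ (the latter simultaneously fixes the rational coefficients of $D$ on $\Supp D$ and forces every $T\notin\Supp D$ to stay out of $\Supp D'$). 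This cuts out an affine subspace $W\subset U$ defined over $\mathbb Q$ containing the point $p_0$ corresponding to $(B,(c_l),(s_j))$. Because a nonempty $\mathbb Q$-affine subspace of a real vector space has a dense set of rational points, $p_0$ can be realized as a convex combination $p_0=\sum_{i=1}^{k} r_i p_i$ of rational points $p_i\in W$ with $r_i>0$, $\sum_i r_i=1$, and all $p_i$ in any prescribed neighborhood of $p_0$.

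Setting $B_i=\sum_{k} b_k^{(i)} P_k$ and $D_i=\sum_l c_l^{(i)} E_l$ (automatically $\mathbb Q$-Cartier), I choose the neighborhood small enough that no irrational coefficient of $B$ or of $D$ crosses zero or any integer threshold under the perturbation. Then (1), (3) and (5) are built into $W$; (2) and (4) follow from the smallness of the perturbation; and (6) follows by clearing denominators in the identity $K_X+B_i-f^*D_i=\sum_j s_j^{(i)}\,\ddiv(\varphi_j)$. To verify that each $f\colon(X,B_i)\to Y$ is a basic $\mathbb Q$-slc-trivial fibration I would check conditions (1)--(5) of Definition \ref{x-def3.2}: (1) and (2) are inherited from the original fibration; (3) uses that horizontal coefficients of $B$ equal to $1$ are rational and hence preserved exactly, while those $<1$ remain $<1$ under small perturbation; (4) is (6); and (5) uses (2) to identify $\lceil -(B_i^{<1})\rceil=\lceil -(B^{<1})\rceil$. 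The final assertion about convex combinations of the $B_i$ is then immediate: for any $t_i\in[0,1]$ with $\sum_i t_i=1$ the point $\sum_i t_i p_i$ still lies in $W$ and in the chosen neighborhood, so the same verification applies, and $K_X+\sum_i t_i B_i\sim_{\mathbb R} f^*\bigl(\sum_i t_i D_i\bigr)$ holds by taking the corresponding convex combination of the $\mathbb Q$-linear equivalences. The main technical point to get right is ensuring that $D_i$ is genuinely $\mathbb Q$-Cartier, not merely a $\mathbb Q$-Weil divisor; this is exactly what forces the use of the auxiliary decomposition $D=\sum_l c_l E_l$ and the choice of the $c_l$ (rather than the $\coeff_T(D)$) as the free variables in $W$.
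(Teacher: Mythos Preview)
Your proposal is correct and follows essentially the same approach as the paper: the paper simply cites \cite[Lemma~11.1]{fujino-slc-trivial} for the existence of the $B_i$, $D_i$, $r_i$ satisfying (1)--(6), and that lemma is precisely the rational-affine-subspace/convex-decomposition argument you have written out in detail (including the care you take to perturb the coefficients $c_l$ of a Cartier decomposition $D=\sum_l c_l E_l$ so that each $D_i$ is genuinely $\mathbb Q$-Cartier). The verification of the ``in particular'' and ``moreover'' clauses via the preservation of $\lceil -(B^{<1})\rceil$ under (2) is likewise the same as in the paper.
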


\begin{proof}
The proof of \cite[Lemma 11.1]{fujino-slc-trivial} works 
with some suitable minor modifications. Therefore, 
we can take $B_i$, $D_i$, and $r_i$ for 
$1\leq i\leq k$ satisfying (1)--(6). 
By (2), $B_i=B^{\leq 1}_i$ holds over the generic 
point of $Y$ for every $i$. 
By (2) again, $\rank f_*\mathcal O_X(\lceil -(B^{<1}_i)\rceil)
=\rank f_*\mathcal O_X(\lceil -(B^{<1})\rceil)=1$. 
Hence $f\colon (X, B_i)\to Y$ is a basic $\mathbb Q$-slc-trivial 
fibration with $K_X+B_i\sim _{\mathbb Q} f^*D_i$ for every $i$. 
We put $\widetilde B=\sum _{i=1}^k t_i B_i$. 
Then $\widetilde B=\widetilde B^{\leq 1}$ holds over the 
generic point of $Y$ by (2). 
By (2) again, we see that $\lceil -(\widetilde B^{<1})\rceil 
=\lceil -(B^{<1})\rceil$ holds. 
Therefore, $f\colon (X, \widetilde B)\to Y$ is a 
basic $\mathbb R$-slc-trivial fibration. 
\end{proof}

We also need the following lemma. 

\begin{lem}\label{x-lem5.5}
Let $f\colon (X, B)\to Y$ be a basic $\mathbb R$-slc-trivial 
fibration. Let $\mathbf B$ denote the discriminant 
$\mathbb R$-b-divisor associated to $f\colon (X, B)\to Y$. 
Suppose that there are $\mathbb{Q}$-divisors 
$B_{1},\dots, B_{k}$ on $X$ and real numbers 
$r_{1},\dots, r_{k}$ such that 
$\sum _{i=1}^k r_i=1$ and $\sum _{i=1}^k r_i B_i=B$.  
We put 
$$ \mathcal P=
\left\{\left.\sum _{i=1}^{k} t_i B_i \, \right|\, \text{$0\leq t_i\leq 1$ 
for every $i$ with $\sum _{i=1}^{k}t_i=1$}\right\}.$$
Assume that $f\colon (X, \Delta)\to Y$ 
has the structure of a basic 
$\mathbb{R}$-slc-trivial fibration for 
every $\Delta\in \mathcal P$. 
For $\Delta \in \mathcal{P}$, 
$\mathbf B^{\Delta}$ denotes the 
discriminant $\mathbb{R}$-b-divisor of 
the basic $\mathbb{R}$-slc-trivial fibration $f\colon (X,\Delta)\to Y$. 
Then, we can find $\Delta_{1},\dots, 
\Delta_{l}\in \mathcal P$ which are 
$\mathbb{Q}_{\geq 0}$-linear combinations of 
$B_{1},\dots, B_{k}$ and positive real 
numbers $s_{1},\dots,s_{l}$ such that 
\begin{itemize}
\item$\sum _{j=1}^l s_j=1$ and $\sum _{j=1}^l s_j \Delta_j=B$, and 
\item $\mathbf B_{Y}=\sum_{j=1}^{l}s_{j}\mathbf B^{\Delta_{j}}_{Y}$. 
\end{itemize}
Here, $\mathbf B_{Y}$ {\em{(}}resp.~$\mathbf B^{\Delta_{j}}_{Y}${\em{)}} 
is the 
trace of the discriminant $\mathbb R$-b-divisor 
$\mathbf B$ {\em{(}}resp.~$\mathbf B^{\Delta_{j}}${\em{)}} on $Y$. 
\end{lem}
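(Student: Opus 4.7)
The plan is to show that the map $\Delta\mapsto \mathbf B^{\Delta}_Y$ is piecewise $\mathbb Q$-affine on the simplex $\mathcal P$, and then to express $B$ in the barycentric coordinates of the cell of linearity containing it. Parameterize $\mathcal P$ by $(t_1,\dots,t_k)\in\mathbb R^k_{\geq 0}$ with $\sum t_i=1$. Let $\nu\colon X^\nu\to X$ be the normalization and write $K_{X^\nu}+B^\nu_i=\nu^*(K_X+B_i)$. Fix a log resolution $h\colon W\to X^\nu$ which resolves $(X^\nu,\Supp\sum_i B^\nu_i)$. For $\Delta=\sum_i t_i B_i\in\mathcal P$, defining $B^{\Delta}_W$ by $K_W+B^{\Delta}_W=h^*\nu^*(K_X+\Delta)$ and using $\sum t_i=1$,
\[
K_W+B^{\Delta}_W=\sum_i t_i\,h^*\nu^*(K_X+B_i)=K_W+\sum_i t_i B^{B_i}_W,
\]
so $B^{\Delta}_W=\sum_i t_i B^{B_i}_W$ depends affine $\mathbb Q$-linearly on $(t_1,\dots,t_k)$.

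For each prime divisor $P$ on $Y$, the number $b_P(\Delta)$ appearing in Definition \ref{x-def3.5} can be computed on $W$ as
\[
b_P(\Delta)=\min_E\frac{1-\coeff_E(B^{\Delta}_W)}{\mult_E(h^*\nu^*f^*P)},
\]
where $E$ ranges over the finitely many prime divisors of $W$ dominating $P$ with $\mult_E(h^*\nu^*f^*P)>0$. Each numerator is affine $\mathbb Q$-linear in $(t_i)$ and each denominator is a positive rational constant independent of $\Delta$, so $b_P$ is the minimum of finitely many affine $\mathbb Q$-linear functions. Only finitely many $P$ can have $b_P\neq 1$ for some $\Delta\in\mathcal P$, namely those lying in the image under $f\circ\nu\circ h$ of $\Exc(h)\cup\Supp h^{-1}_*\nu^*\!\sum_i B_i$ together with the discriminant locus of $f\circ\nu\circ h$. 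Hence $\Delta\mapsto \mathbf B^{\Delta}_Y$ is piecewise affine $\mathbb Q$-linear on $\mathcal P$, and the common refinement of the domains of linearity of these finitely many functions defines a finite $\mathbb Q$-rational polyhedral subdivision $\mathcal P=\bigcup_\alpha \mathcal P_\alpha$ on each of whose closed cells $\mathbf B^{\cdot}_Y$ is affine with rational structure.

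Let $\mathcal P_{\alpha_0}$ be the unique cell whose relative interior contains $B$, and let $\Delta_1,\dots,\Delta_l$ denote its vertices. Each $\Delta_j\in\mathcal P$ is a $\mathbb Q_{\geq 0}$-linear combination of $B_1,\dots,B_k$, so $f\colon(X,\Delta_j)\to Y$ is a basic $\mathbb Q$-slc-trivial fibration by Lemma \ref{x-lem5.4}. A convex combination $B=\sum_j s_j\Delta_j$ with all $s_j>0$ and $\sum_j s_j=1$ exists because $B$ lies in the relative interior of $\mathcal P_{\alpha_0}$, and the affinity of $\mathbf B^{\cdot}_Y$ on $\mathcal P_{\alpha_0}$ yields $\mathbf B_Y=\sum_j s_j\mathbf B^{\Delta_j}_Y$, as required. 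The main obstacle is the careful verification of the piecewise $\mathbb Q$-affine dependence of $\mathbf B^{\Delta}_Y$ on $\Delta$---in particular, the uniform finiteness (in $\Delta\in\mathcal P$) of the support of $\mathbf B^{\Delta}_Y$ and the log canonical threshold formula for $b_P$ on the common log resolution, taking into account the conductor divisors entering through $\nu$; once this piecewise linearity is established, the remaining step is standard polyhedral geometry.
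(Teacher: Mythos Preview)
Your approach is correct and essentially identical to the paper's: both express each $b_P$ as the minimum of finitely many rational-affine functions of $(t_1,\dots,t_k)$, cut out a rational polytope containing $B$ on which a single linear branch computes this minimum, and then write $B$ as a convex combination of rational points therein. Two small points: for your min-formula for $b_P$ to hold, $h$ must resolve not only $\Supp\sum_i B^\nu_i$ but also the vertical part $h^*\nu^*f^*P$ over each of the finitely many relevant $P$ (the paper secures this by first passing to a resolution of $Y$ and an induced fibration where $\Sigma_X^v\subset f^{-1}\Sigma_Y\subset\Sigma_X$ are simple normal crossing, which is exactly the ``careful verification'' you flag); and your appeal to Lemma~\ref{x-lem5.4} to conclude that $(X,\Delta_j)\to Y$ is a basic $\mathbb Q$-slc-trivial fibration is neither what that lemma asserts nor required---the hypothesis of Lemma~\ref{x-lem5.5} already gives the $\mathbb R$-slc-trivial structure, and its conclusion asks only that the $\Delta_j$ be $\mathbb Q_{\geq 0}$-linear combinations of the $B_i$.
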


\begin{proof}
Since $\mathbf B$ is an $\mathbb R$-b-divisor, it is sufficient 
to prove the lemma for a resolution of $Y'\to Y$ and the induced basic slc-trivial 
fibrations $f'\colon (X',B_{X'})\to Y$ and $f'\colon (X',(B_{i})_{X'})\to Y'$. 
Moreover, by Definition \ref{x-def3.5} and taking the 
normalization of $X$, we may assume that $X$ is a disjoint union of smooth 
varieties. 
Therefore, by replacing $X$, $Y$, $B$, 
and $B_{i}$, 
we may assume that $Y$ is smooth and 
there are simple normal 
crossing divisors $\Sigma_{X}$ on $X$ 
and $\Sigma_{Y}$ on $Y$ such that
\begin{itemize}
\item
$\Supp B\subset \Sigma_{X}$ 
and $\Supp B_{i}\subset \Sigma_{X}$ for every $i$, 
\item
$\Sigma_{X}^{v}\subset f^{-1}
\Sigma_{Y}\subset \Sigma_{X}$, where 
$\Sigma_{X}^{v}$ is the vertical part of $\Sigma_{X}$, 
\item
$f$ is smooth over $Y\setminus \Sigma_{Y}$, and
\item
$\Sigma_{X}$ is relatively 
simple normal crossing over $Y\setminus \Sigma_{Y}$. 
\end{itemize}  
Then it is clear that $\Supp \mathbf B_{Y}\subset \Sigma_{Y}$ and $\Supp 
\mathbf B^{\Delta}_{Y} \subset \Sigma_{Y}$ for all $\Delta \in \mathcal{P}$. 
We consider a rational convex polytope 
$$\mathcal C=\left\{\boldsymbol{v}=(v_1, \ldots, v_k)\in 
[0, 1]^k\, 
\left|\, \text{$\sum _jv_j=1$}\right.\right\}\subset [0, 1]^k.$$ 
Then we may identify $\mathcal C$ with 
$\mathcal P$ by putting $\Delta_{\boldsymbol{v}}=
\sum _i v_i B_i\in \mathcal P$ for 
$\boldsymbol{v}=(v_1, \ldots, v_k)\in \mathcal C$. 
We define $\boldsymbol{v}_0 \in \mathcal C$ to 
be the point such that 
$\Delta_{\boldsymbol{v}_0}=B$. 

Fix a prime divisor $Q$ on $Y$ which is a 
component of $\Sigma_{Y}$. 
We shrink $Y$ near the generic point of 
$Q$ so that all components of $f^{*}Q$ dominate $Q$. 
We can write $f^{*}Q=\sum_{i}m_{P_{i}}P_{i}$, 
where $P_{i}$ are components of $\Sigma_{X}$ such that $f(P_{i})=Q$, and 
$m_{P_{i}}=\coeff_{P_{i}}(f^{*}Q)$. 
We fix a component $P_{(B,Q)}$ of $f^{*}Q$ such that
$$\frac{1-\coeff_{P_{(B,Q)}}(B)}{m_{P_{(B,Q)}}}=
\underset{P_{i}}\min\left\{\frac{1-
\coeff_{P_{i}}(B)}{m_{P_{i}}}\right\}.$$
Note that $\frac{1-\coeff_{P_{(B,Q)}}(B)}{m_{P_{(B,Q)}}}$ 
is the log canonical 
threshold of $(X,B)$ with respect to $f^*Q$ over the generic point 
of $Q$ because $(X,B+\mu f^{*}Q)$ is 
sub log canonical over the generic point of $Q$ if and only 
if $\coeff_{P_{i}}(B)+\mu m_{P_{i}}\leq 1$ for all $P_{i}$. 
For every component $P_{i}$ of $f^{*}Q$, 
we can define a function 
$$H^{(P_{i})}(\boldsymbol{v}):=
\frac{1-\coeff_{P_{(B,Q)}}(\Delta_{\boldsymbol{v}})}
{m_{P_{(B,Q)}}}-\frac{1-\coeff_{P_{i}}(\Delta_{\boldsymbol{v}})}{m_{P_{i}}}$$
and the half space
$$H^{(P_{i})}_{\leq0}:=\{\boldsymbol{v}
\in \mathcal{C}\;|\; H^{(P_{i})}(\boldsymbol{v})\leq 0\}.$$
It is easy to check that $H^{(P_{i})}$ are 
rational affine functions and the half spaces $H^{(P_{i})}_{\leq0}$ contain 
$\boldsymbol{v}_{0}$ since 
$\boldsymbol{v}_{0}$ is the point 
such that $\Delta_{\boldsymbol{v}_{0}}=B$. 
Therefore, the set
$$\mathcal{C}_{Q}:=\mathcal{C}\cap 
\left(\bigcap_{P_{i}}H^{(P_{i})}_{\leq0}\right)$$
is a rational polytope in $\mathcal{C}$ 
containing $\boldsymbol{v}_{0}$, where 
$P_{i}$ runs over components of $f^{*}Q$. 
We put 
\begin{equation*}
\begin{split}
t(\Delta_{\boldsymbol{v}},Q):=&1-\coeff_{Q}
\left(\mathbf B^{\Delta_{\boldsymbol{v}}}_{Y}\right)\\
=&\sup\{\mu\in \mathbb{R}\,|\,\text{$(X,\Delta_{\boldsymbol{v}}+
\mu f^{*}Q)$ is sub log canonical over the generic point of $Q$}\}.
\end{split}
\end{equation*}
Then, by the definitions of $H^{(P_{i})}_{\leq0}$, 
every $\boldsymbol{v}\in \mathcal{C}_{Q}$ satisfies
\begin{equation}\label{x-eq5.1}
t(\Delta_{\boldsymbol{v}},Q)=\underset{P_{i}}
\min\left\{\frac{1-\coeff_{P_{i}}(\Delta_{\boldsymbol{v}})}{m_{P_{i}}}\right\}
=\frac{1-\coeff_{P_{(B,Q)}}(\Delta_{\boldsymbol{v}})}{m_{P_{(B,Q)}}}.
\end{equation}
Here, to prove the first equality we used 
the fact that $(X,\Delta_{\boldsymbol{v}}+\mu f^{*}Q)$ 
is sub log canonical over the generic point 
of $Q$ if and only if 
$\coeff_{P_{i}}(\Delta_{\boldsymbol{v}})+\mu m_{P_{i}}\leq 1$ for all $P_{i}$. 

Finally, we define
$$\mathcal{C'}:=\bigcap_{Q}\mathcal{C}_{Q},$$
where $Q$ runs over all irreducible components of 
$\Sigma_{Y}$. 
It is easy to see that $\mathcal{C'}$ is a 
rational polytope in $\mathcal{C}$ and $\mathcal{C'}$ 
contains $\boldsymbol{v}_{0}$. 
Thus, we can find rational points 
$\boldsymbol{v}_{1},\ldots, \boldsymbol{v}_{l}$ 
and positive real numbers $s_{1},\ldots, s_{l}$ 
such that $\sum_{j=1}^{l}s_{j}=1$ and $\sum_{j=1}^{l}s_{j} \boldsymbol{v}_{j}
=\boldsymbol{v}_{0}$. 
We put $\Delta_{j}=\Delta_{\boldsymbol{v}_{j}}$ for each $1\leq j \leq l$. 
Then $B=\Delta_{\boldsymbol{v}_{0}}=\sum_{j=1}^{l}s_{j}\Delta_{j}$. 
For every component $Q$ of 
$\Sigma_{Y}$, the equation \eqref{x-eq5.1} 
implies that 
\begin{equation*}
\begin{array}{llll}
t(B,Q)&=&\dfrac{1-\coeff_{P_{(B,Q)}}(B)}
{m_{P_{(B,Q)}}} &\qquad\qquad(\text{see \eqref{x-eq5.1}})\\\\
&=&\dfrac{1-\coeff_{P_{(B,Q)}}
\left(\sum_{j=1}^{l}s_{j}
\Delta_{j}\right)}{m_{P_{(B,Q)}}}&\qquad
\qquad(B=\sum_{j=1}^{l}s_{j}\Delta_{j})\\\\
&=&\sum_{j=1}^{l}s_{j}\cdot
\dfrac{1-\coeff_{P_{(B,Q)}}(\Delta_{j})}
{m_{P_{(B,Q)}}}&\qquad\qquad(\sum_{j=1}^{l}s_{j}=1)\\\\
&=& \sum_{j=1}^{l}s_{j}
\cdot t(\Delta_{j},Q) &\qquad\qquad(\text{see \eqref{x-eq5.1}}).
\end{array}
\end{equation*}
Since $t(\Delta_{j},Q)=1-\coeff_{Q}
\left(\mathbf B^{\Delta_{j}}_{Y}\right)$ for 
every $1\leq j \leq l$ and every irreducible 
component $Q$ of $\Sigma_{Y}$, we see 
that $\mathbf B_{Y}=\sum_{j=1}^{l}s_{j}\mathbf B^{\Delta_{j}}_{Y}$. 
\end{proof}

We are ready to prove Theorem \ref{x-thm5.1}. 

\begin{proof}[Proof of Theorem \ref{x-thm5.1}]
Fix an arbitrary projective birational morphism 
$\sigma \colon Y' \to Y$ from a normal quasi-projective variety $Y'$, and let 
$$
\xymatrix{
   (X', B_{X'}) \ar[r]^{\mu} \ar[d]_{f'} & (X, B)\ar[d]^{f} \\
   Y' \ar[r]_{\sigma} & Y 
} 
$$
be the induced basic $\mathbb R$-slc-trivial fibration (see 
Definition \ref{x-def3.4}).
It is sufficient to show that 
$\sigma^{*}(K_{Y}+\mathbf B_Y)
=K_{Y'}+\mathbf B_{Y'}$ and $\mathbf M_Y$ is a potentially 
nef $\mathbb R$-divisor on $Y$ with $\sigma^{*}\mathbf M_{Y}=\mathbf M_{Y'}$. 

We pick $\mathbb{Q}$-divisors 
$B_{1},\dots, B_{k}$ on $X$, 
$\mathbb{Q}$-divisors $D_{1},\dots, D_{k}$ on $Y$ and 
positive real numbers $r_{1},\dots, r_{k}$ as in Lemma \ref{x-lem5.4}. 
Then, the following properties hold.
\begin{itemize}
\item$\sum _{i=1}^k r_i=1$ with 
$\sum _{i=1}^k r_i B_i=B$ and 
$\sum _{i=1}^k r_i D_i=D$, 
\item $\Supp B=\Supp B_i$ and $\Supp D=\Supp D_i$ hold for every $i$, and
\item $K_X+B_i\sim _{\mathbb Q} f^*D_i$ holds for every $i$. 
\end{itemize}
We put $D'_{i}=\sigma^{*}D_{i}$ and 
we define $B'_{i}$ by $K_{X'}+B'_{i}=\mu^{*}(K_{X}+B_{i})$ for any $1\leq i\leq k$. 
Then $f'\colon(X',B'_{i})\to Y'$ are basic 
$\mathbb Q$-slc-trivial fibrations with $K_{X'}+B'_{i}\sim_{\mathbb{Q}}f'^{*}D'_{i}$. 
As in Lemma \ref{x-lem5.5}, we put 
$$
\mathcal P'=\left\{\left.\sum _{i=1}^{k} t_i B'_i \, 
\right|\, 
\text{$0\leq t_i\leq 1$ for every $i$ with $\sum _{i=1}^{k}t_i=1$}\right\}.
$$
We may assume that $f'\colon (X', \Delta)\to Y'$ is a basic 
$\mathbb R$-slc-trivial fibration for every $\Delta\in \mathcal P'$. 
We define $\mathcal P'_{\mathbb{Q}}$ by
$$
\mathcal P'_{\mathbb{Q}}:=\left\{\left.\sum _{i=1}^{k} t_i B'_i \, 
\right|\, 
\text{$t_i\in \mathbb Q$ and 
$0\leq t_i\leq 1$ for every $i$ with $\sum _{i=1}^{k}t_i=1$}\right\}.
$$
Note that $B_{X'}\in \mathcal P'$. 

Pick any $\Delta =\sum_{i=1}^{k} t_i B'_i\in \mathcal P'_{\mathbb{Q}}$.  
Since $\mu_{*}B'_{i}=B_{i}$, we 
have $\mu_{*}\Delta=\sum_{i=1}^{k} t_i B_i$ 
such that  $t_{i}\in \mathbb{Q}$. 
Therefore, the morphism $f\colon (X,\mu_{*}\Delta)\to Y$ 
is a basic $\mathbb Q$-slc-trivial fibration 
such that $K_{X}+\mu_{*}\Delta \sim_{\mathbb{Q}}f^{*}
\bigl(\sum_{i=1}^{k} t_i D_{i}\bigr)$. 
Let $\mathbf B^{\Delta}$ and $\mathbf M^{\Delta}$ 
be the discriminant $\mathbb Q$-b-divisor and the moduli 
$\mathbb Q$-b-divisor of the basic $\mathbb Q$-slc-trivial 
fibration $f\colon(X,\mu_{*}\Delta)\to Y$, respectively. 
Because we have $\Supp
\bigl(\sum_{i=1}^{k} t_i D_{i}\bigr)\subset \Supp D$ and $\Supp 
\mu_{*}\Delta\subset \Supp B$, we may apply Theorem \ref{x-thm4.1}. 
Therefore, for every $\Delta \in \mathcal P'_{\mathbb{Q}}$ 
it follows that $\sigma^{*}(K_{Y}+\mathbf B^{\Delta}_{Y})
=K_{Y'}+\mathbf B^{\Delta}_{Y'}$ and $\mathbf M^{\Delta}_Y$ 
is a potentially nef $\mathbb Q$-divisor on $Y$ with 
$\sigma^{*}\mathbf M^{\Delta}_{Y}=\mathbf M^{\Delta}_{Y'}$. 
It also follows from the construction that $f'\colon (X',\Delta)\to Y'$ 
is the basic $\mathbb Q$-slc-trivial fibration induced from 
$f\colon (X,\mu_{*}\Delta)\to Y$ 
such that 
$K_{X'}+\Delta\sim_{\mathbb{Q}}f'^{*}\bigl(\sum_{i=1}^{k} t_i D'_{i}\bigr)$. 
It is because $K_{X'}+\Delta=\mu^{*}(K_{X}+\mu_{*}\Delta)$ by construction. 

We apply Lemma \ref{x-lem5.5} to $f'\colon(X',B_{X'})\to Y'$ 
and $\mathcal{P}'$. 
Then, we can find 
$\Delta_{1},\dots, \Delta_{l}\in \mathcal P'_{\mathbb{Q}}$ 
and positive real numbers $s_{1},\dots,s_{l}$ such that 
\begin{itemize}
\item$\sum _{j=1}^l s_j=1$ and $\sum _{j=1}^l s_j \Delta_j=B_{X'}$, and 
\item $\mathbf B_{Y'}=\sum_{j=1}^{l}s_{j}\mathbf B^{\Delta_{j}}_{Y'}$. 
\end{itemize}
Since $\mathbf B$ and $\mathbf B^{\Delta_{j}}$ 
are $\mathbb R$-b-divisors, we have $\mathbf B_{Y}=
\sum_{j=1}^{l}s_{j}\mathbf B^{\Delta_{j}}_{Y}$. 
Then 
\begin{equation*}
\begin{split}
\sigma^{*}(K_{Y}+\mathbf B_Y)&=
\sigma^{*}\left(K_{Y}+\sum_{j=1}^{l}s_{j}\mathbf B^{\Delta_{j}}_{Y}\right)
=\sum_{j=1}^{l}s_{j}\sigma^{*}(K_{Y}+\mathbf B^{\Delta_{j}}_{Y})\\
&=\sum_{j=1}^{l}s_{j}(K_{Y'}+\mathbf B^{\Delta_{j}}_{Y'})
=K_{Y'}+\sum_{j=1}^{l}s_{j}\mathbf B^{\Delta_{j}}_{Y'}\\
&=K_{Y'}+\mathbf B_{Y'}.
\end{split}
\end{equation*}
Therefore, we have $\sigma^{*}(K_{Y}+\mathbf B_Y)
=K_{Y'}+\mathbf B_{Y'}$. Recalling 
that $\sigma \colon Y' \to Y$ is an arbitrary 
projective birational morphism, we see 
that (i) of Theorem \ref{x-thm5.1} holds, i.e.,
$$\mathbf K +\mathbf B=
\overline {\mathbf K_Y+\mathbf B_Y}.$$ 
As in the third paragraph, for each $j$ we 
define $D'_{\Delta_{j}}$ to be the $\mathbb{Q}$-divisor 
on $Y'$ associated to the basic $\mathbb Q$-slc-trivial 
fibration $f'\colon(X',\Delta_{j})\to Y'$. 
Note that $K_{X'}+\Delta_{j}
\sim_{\mathbb{Q}}f'^{*}D'_{\Delta_{j}}$ for all $j$. 
Since $\sum _{j=1}^l s_j=1$ and 
$\sum _{j=1}^l s_j \Delta_j=B_{X'}$, we 
have $\sigma^{*}D=\sum_{j=1}^{l}s_{j}D'_{\Delta_{j}}$. 
By the relation $\mathbf B_{Y'}=
\sum_{j=1}^{l}s_{j}\mathbf B^{\Delta_{j}}_{Y'}$ 
and the definition of the moduli $\mathbb R$-b-divisors 
(see Definition \ref{x-def3.5}), we have
$$\mathbf M_{Y'}=\sum_{j=1}^{l}s_{j}
\mathbf M^{\Delta_{j}}_{Y'}\qquad \text{and} \qquad 
\mathbf M_{Y}=\sum_{j=1}^{l}s_{j}\mathbf M^{\Delta_{j}}_{Y}$$
Then $\mathbf M_Y$ is a potentially nef $\mathbb R$-divisor on $Y$ and 
\begin{equation*}
\begin{split}
\sigma^{*}\mathbf M_{Y}&=
\sigma^{*}\left(\sum_{j=1}^{l}s_{j}
\mathbf M^{\Delta_{j}}_{Y}\right)=\sum_{j=1}^{l}s_{j}
\mathbf M^{\Delta_{j}}_{Y'}=\mathbf M_{Y'}.
\end{split}
\end{equation*}
Here, we used 
$\sigma^{*}\mathbf M^{\Delta_{j}}_{Y}
=\mathbf M^{\Delta_{j}}_{Y'}$ for every $j$, which follows 
from the third paragraph. 
We complete the proof. 
\end{proof}

The following result is essentially obtained in the proof of 
Theorem \ref{x-thm5.1}. 
We explicitly state it here for future use. 

\begin{thm}\label{x-thm5.6}
Let $f\colon (X, B)\to Y$ be a 
basic $\mathbb R$-slc-trivial 
fibration with $K_X+B\sim _{\mathbb R} f^*D$. 
Then there are $\mathbb{Q}$-divisors $B_{1},\dots, B_{l}$ 
on $X$, $\mathbb Q$-Cartier  
$\mathbb{Q}$-divisors $D_{1},\dots, D_{l}$ on 
$Y$ and positive real numbers $r_{1},\dots, r_{l}$ satisfying the 
following properties.
\begin{itemize}
\item$\sum _{j=1}^l r_j=1$ with 
$\sum _{j=1}^l r_j B_j=B$ and 
$\sum _{j=1}^l r_j D_j=D$, 
\item $\Supp B=\Supp B_j$, $\lfloor B^{>1}\rfloor =
\lfloor B^{>1}_j\rfloor$, and 
$\lceil -(B^{<1})\rceil=\lceil -(B^{<1}_j)\rceil$ hold 
for every $j$, 
\item if $\coeff _S (B)\in \mathbb Q$ for a prime 
divisor $S$ on $X$, 
then $\coeff _S (B)=\coeff _S (B_j)$ holds for 
every $j$, 
\item $\Supp D=\Supp D_j$ holds for every $j$, 
\item if $\coeff _T(D)\in \mathbb Q$ for a 
prime divisor $T$ on $Y$, then $\coeff _T(D)= 
\coeff _T (D_j)$ holds for 
every $j$, 
\item $K_X+B_j\sim _{\mathbb Q} f^*D_j$ holds for every $j$, 
\item $\mathbf B=\sum _{j=1}^l r_j \mathbf B_j$ as b-divisors, 
where $\mathbf B$ {\em{(}}resp.~$\mathbf B_j${\em{)}} is 
the discriminant $\mathbb{R}$-b-divisor {\em{(}}resp.~the 
discriminant $\mathbb{Q}$-b-divisor{\em{)}} of 
$f\colon (X,B)\to Y$ {\em{(}}resp.~$f\colon (X,B_{j})\to Y${\em{)}}, and 
\item $\mathbf M=\sum _{j=1}^l r_i \mathbf M_j$ as b-divisors, where 
$\mathbf M$ {\em{(}}resp.~$\mathbf M_j${\em{)}} is 
the moduli $\mathbb{R}$-b-divisor {\em{(}}the moduli 
$\mathbb{Q}$-b-divisor{\em{)}} associated to $f\colon (X,B)\to Y$ 
{\em{(}}resp.~$f\colon (X,B_{j})\to Y${\em{)}}. 
\end{itemize}
\end{thm}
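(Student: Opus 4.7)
The plan is to combine Lemma~\ref{x-lem5.4} (a preliminary decomposition of $B$ into $\mathbb{Q}$-divisors), Lemma~\ref{x-lem5.5} (a refinement so the discriminant traces decompose with the same coefficients), and Theorem~\ref{x-thm5.1} (which upgrades trace equalities on a sufficiently nice base to equalities of $\mathbb{R}$-b-divisors). First, apply Lemma~\ref{x-lem5.4} to $f\colon (X,B)\to Y$ to obtain $\mathbb{Q}$-divisors $B_1,\dots,B_k$ on $X$, $\mathbb{Q}$-Cartier $\mathbb{Q}$-divisors $D_1,\dots,D_k$ on $Y$, and positive real numbers $r_1,\dots,r_k$ with $\sum r_i=1$, $\sum r_i B_i=B$, and $\sum r_i D_i=D$, such that each $f\colon (X,B_i)\to Y$ is a basic $\mathbb{Q}$-slc-trivial fibration satisfying all the listed support and coefficient compatibility conditions with $B$ and $D$, and such that $f\colon (X,\Delta)\to Y$ is a basic $\mathbb{R}$-slc-trivial fibration for every $\Delta$ in the simplex $\mathcal{P}$ spanned by $B_1,\dots,B_k$.

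Second, pick a projective birational morphism $\sigma\colon Y_0\to Y$ from a smooth quasi-projective variety $Y_0$ together with a simple normal crossing divisor $\Sigma_0$ on $Y_0$ containing $\Supp \sigma^{*}D$ and all $\Supp \sigma^{*}D_i$, such that for the induced basic $\mathbb{R}$-slc-trivial fibration $f_0\colon (X_0,B_{X_0})\to Y_0$, every stratum of $(X_0,\Supp B_{X_0})$ is smooth over $Y_0\setminus \Sigma_0$; such a model exists by Hironaka resolution combined with Lemma~\ref{x-lem4.4}. Apply Lemma~\ref{x-lem5.5} to $f_0$ with the $\mathbb{Q}$-divisors $(B_i)_{X_0}$; this produces $s_1,\dots,s_l>0$ with $\sum s_j=1$ and $\mathbb{Q}_{\geq 0}$-combinations $\Delta_j'=\sum_{i} t_{ij}(B_i)_{X_0}$ on $X_0$ such that $B_{X_0}=\sum_{j=1}^{l}s_j\Delta_j'$ and the trace identity $\mathbf{B}_{Y_0}=\sum_{j=1}^{l}s_j\mathbf{B}^{\Delta_j'}_{Y_0}$ holds. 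Set $\Delta_j:=\sum_i t_{ij}B_i$ on $X$ and $D_{\Delta_j}:=\sum_i t_{ij}D_i$ on $Y$. Each $f\colon (X,\Delta_j)\to Y$ is then a basic $\mathbb{Q}$-slc-trivial fibration with $K_X+\Delta_j\sim_{\mathbb{Q}}f^*D_{\Delta_j}$, and since $\Delta_j$ is a convex combination of the $B_i$ (whose supports and rational-coefficient values coincide with those of $B$), it inherits all the listed support and coefficient conditions from Lemma~\ref{x-lem5.4}. Moreover, $f_0\colon (X_0,\Delta_j')\to Y_0$ is the basic $\mathbb{Q}$-slc-trivial fibration induced from $f\colon (X,\Delta_j)\to Y$, so the associated discriminant and moduli $\mathbb{Q}$-b-divisors agree: $\mathbf{B}^{\Delta_j'}=\mathbf{B}^{\Delta_j}$ and $\mathbf{M}^{\Delta_j'}=\mathbf{M}^{\Delta_j}$.

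Finally, Theorem~\ref{x-thm5.1} applied to $f_0\colon (X_0,B_{X_0})\to Y_0$ and to each $f_0\colon (X_0,\Delta_j')\to Y_0$ yields $\mathbf{K}+\mathbf{B}=\overline{\mathbf{K}_{Y_0}+\mathbf{B}_{Y_0}}$ and $\mathbf{K}+\mathbf{B}^{\Delta_j}=\overline{\mathbf{K}_{Y_0}+\mathbf{B}^{\Delta_j}_{Y_0}}$. Combined with the trace identity on $Y_0$ from the previous paragraph and $\sum s_j=1$, $\mathbb{R}$-linearity of the Cartier closure forces $\mathbf{B}=\sum_j s_j\mathbf{B}^{\Delta_j}$ as $\mathbb{R}$-b-divisors, and the analogous statement for moduli b-divisors follows from $\mathbf{M}=\overline{\sigma^{*}D}-\mathbf{K}-\mathbf{B}$ and the corresponding formula for each $\mathbf{M}^{\Delta_j}$. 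Relabelling $(s_j,\Delta_j,D_{\Delta_j})$ as $(r_j,B_j,D_j)$ then produces a decomposition meeting every bullet of the theorem. The main obstacle is precisely this final upgrade: Lemma~\ref{x-lem5.5} only furnishes equality of traces on a single base, so Theorem~\ref{x-thm5.1} (together with its careful SNC preparation via Lemma~\ref{x-lem4.4}) is indispensable in order to propagate the trace identity on $Y_0$ to an identity of $\mathbb{R}$-b-divisors.
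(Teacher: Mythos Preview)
Your proof is correct and follows essentially the same route as the paper's: apply Lemma~\ref{x-lem5.4}, pass to a smooth higher model of $Y$ where Theorem~\ref{x-thm5.1} applies, refine via Lemma~\ref{x-lem5.5} to obtain the trace decomposition of the discriminant, and then use Theorem~\ref{x-thm5.1} to upgrade the trace identity to an equality of b-divisors. One cosmetic correction: the citation of Lemma~\ref{x-lem4.4} for constructing the model $Y_0$ is misplaced (that lemma concerns resolutions of cyclic covers); the model is obtained simply from Hironaka resolution together with the induced-fibration construction of Definition~\ref{x-def3.4}, and, as the paper notes, you should also arrange that every stratum of $(X_0,\Supp (B_i)_{X_0})$ is smooth over $Y_0\setminus\Sigma_0$ for each $i$ so that Theorem~\ref{x-thm5.1} applies to all the $\Delta_j'$.
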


\begin{proof}[Sketch of Proof]
It can be proved by Theorem \ref{x-thm5.1}, Lemma \ref{x-lem5.4} 
and Lemma \ref{x-lem5.5}. 
We only outline the proof.  

We note that the properties of 
Theorem \ref{x-thm5.6} except the last two 
properties correspond to (1)--(6) of Lemma \ref{x-lem5.4} respectively.
By Lemma \ref{x-lem5.4}, we can find $\mathbb{Q}$-divisors 
$\widetilde{B}_{1},\dots, \widetilde{B}_{k}$ on 
$X$, $\mathbb{Q}$-Cartier $\mathbb Q$-divisors 
$\widetilde{D}_{1},\dots, \widetilde{D}_{k}$ 
on $Y$ and positive real numbers 
$\widetilde{r}_{1},\dots, \widetilde{r}_{k}$ 
satisfying (1)--(6) of Lemma \ref{x-lem5.4}. 
Then $\widetilde{B}_{i}$, $\widetilde{D}_{i}$, 
and $\widetilde{r}_{i}$ satisfy all the properties 
of Theorem \ref{x-thm5.6} except the last two properties. 
More specifically, $\widetilde{B}_{i}$, $\widetilde{D}_{i}$, 
and $\widetilde{r}_{i}$ satisfy 
\begin{itemize}
\item$\sum _{i=1}^k \widetilde{r}_i=1$ with 
$\sum _{i=1}^k \widetilde{r}_i \widetilde{B}_i=B$ and 
$\sum _{i=1}^k \widetilde{r}_i \widetilde{D}_i
=D$ (see (1) of Lemma \ref{x-lem5.4}), 
\item $\Supp B=\Supp \widetilde{B}_i$ and 
$\Supp D=\Supp \widetilde{D}_i$ hold for every $i$, 
\item $K_X+\widetilde{B}_i\sim _{\mathbb Q} f^*\widetilde{D}_i$ 
holds for every $i$  (see (6) of Lemma \ref{x-lem5.4}), 
\end{itemize} 
and (2)--(5) in Lemma \ref{x-lem5.4}. 
We take a smooth higher model $\sigma \colon Y'\to Y$ 
so that the induced basic $\mathbb{R}$-slc-trivial 
fibration $f'\colon (X',B')\to Y'$ satisfies the property 
that there exists a simple normal crossing divisor 
$\Sigma'$ on $Y'$ such that 
$\Supp \sigma^{*}D\subset \Sigma'$ and 
that every stratum of $(X', \Supp B')$ is 
smooth over $Y'\setminus \Sigma'$. 
The morphism $X'\to X$ is denoted by $\mu$. 
For each $1\leq i\leq k$, 
let $\widetilde{B}'_i$ be a $\mathbb{Q}$-divisor on 
$X'$ defined by $K_{X'}+\widetilde{B}'_i=\mu^{*}(K_{X}+\widetilde{B}_i)$. 
Note that 
$K_{X'}+\widetilde{B}'_i\sim_{\mathbb{Q}}f'^{*}\sigma^{*}\widetilde{D}_{i}$. 
We may assume that $\Supp \sigma^{*}\widetilde{D}_{i}\subset \Sigma'$ and 
that every stratum of $(X', \Supp \widetilde{B}'_i)$ is 
smooth over $Y'\setminus \Sigma'$ for every $i$ by taking 
$\sigma\colon Y'\to Y$ suitably. 
We define 
$$ \mathcal P=
\left\{\left.\sum _{i=1}^{k} t_i 
\widetilde{B}'_i \, \right|\, \text{$0\leq t_i\leq 1$ 
for every $i$ with $\sum _{i=1}^{k}t_i=1$}\right\}.$$
By Lemma \ref{x-lem5.5}, we can find 
$B'_{1},\dots, B'_{l}\in \mathcal P$ which 
are $\mathbb{Q}_{\geq 0}$-linear combinations of 
$\widetilde{B}'_{1},\dots, \widetilde{B}'_{l}$ 
and positive real numbers $r_{1},\dots,r_{l}$ such that 
\begin{itemize}
\item$\sum _{j=1}^l r_j=1$ and $\sum _{j=1}^l r_j B'_j=B'$, and 
\item $\mathbf B_{Y'}=\sum_{j=1}^{l}r_{j}\mathbf B_{jY'}$. 
\end{itemize}
Here, $\mathbf B_j$ is the 
discriminant $\mathbb{Q}$-b-divisor associated 
to $f'\colon (X',B'_{j})\to Y'$. 
By Theorem \ref{x-thm5.1}, we have $\mathbf K+\mathbf B=
\overline{\mathbf K_{Y'}+\mathbf B_{Y'}}$ and $\mathbf K+
\mathbf B_j=\overline{\mathbf K_{Y'}+\mathbf B_{jY'}}$. 
We put $B_j=\mu_{*}B'_j$ for each $1\leq j \leq l$. 
Then we can find $\mathbb{Q}$-divisors $D_{1},\dots, D_{l}$ on $Y$ 
such that $K_{X}+B_{j}\sim_{\mathbb{Q}}f^{*}D_{j}$ and 
$\sum_{j=1}^{l}r_{j}D_{j}=D$. 
By construction, we can easily see that $B_1\ldots, B_l$, $D_1, 
\ldots, D_l$, and $r_1, \ldots, r_l$ constructed above satisfy 
the desired properties. 
\end{proof}

\section{Proof of Theorem \ref{x-thm1.2}}\label{x-sec6} 

In this section, we will prove 
Theorem \ref{x-thm1.2}, which is the main 
result of this paper. 
Then we will treat Theorem \ref{x-thm1.1} 
and Corollary \ref{x-cor1.4}. 
We note that we will freely use the framework 
of quasi-log schemes 
in the proof of Theorem \ref{x-thm1.2}. 
For the details of quasi-log schemes, 
see \cite[Chapter 6]{fujino-foundations}. 
Let us start with the proof of Theorem \ref{x-thm1.2}. 

\begin{proof}[Proof of Theorem \ref{x-thm1.2}]
From Step \ref{x-step1.3.1} to Step \ref{x-step1.3.3}, 
we will define a natural quasi-log scheme structure on $Z$. 
This part is essentially contained 
in \cite[Chapter 6]{fujino-foundations} 
and \cite{fujino}. 

\setcounter{step}{0}
\begin{step}\label{x-step1.3.1}
In this step, we will give a natural quasi-log scheme 
structure on $W':=W\cup \Nlc(X, \Delta)$. 
This step is essentially the adjunction for quasi-log schemes 
(see \cite[Theorem 6.3.5 (i)]{fujino-foundations}). 

\medskip 

We put $W':=W\cup \Nlc(X, \Delta)$ 
as above. We will sketch how to 
define a natural quasi-log scheme structure on $W'$. 
Let $f\colon Y\to X$ be a projective 
birational morphism from a 
smooth quasi-projective variety $Y$ such that 
$K_Y+\Delta_Y=f^*(K_X+\Delta)$ and 
that $\Supp \Delta_Y$ is a simple normal 
crossing divisor on $Y$. 
By taking some more blow-ups, 
we may assume that 
the union of all log canonical 
centers of $(Y, \Delta_Y)$ mapped to 
$W'$ by $f$, which is denoted by $V'$, is a 
union of some irreducible 
components of $\Delta^{=1}_Y$. 
As usual, we put $A=\lceil -(\Delta^{<1}_Y)\rceil$ and 
$N=\lfloor \Delta^{>1}_Y\rfloor$ and consider 
the following short exact sequence: 
$$
0\to \mathcal O_Y(A-N-V')\to 
\mathcal O_Y(A-N)\to \mathcal O_{V'}(A-N)\to 0. 
$$
By taking $R^if_*$, we obtain: 
\begin{equation*}
\begin{split}
0&\longrightarrow f_*\mathcal O_Y(A-N-V')\longrightarrow 
f_*\mathcal O_Y(A-N)\longrightarrow 
f_*\mathcal O_{V'}(A-N)\\
&\overset{\delta}{\longrightarrow} 
R^1f_*\mathcal O_Y(A-N-V')\longrightarrow \cdots 
\end{split}
\end{equation*}
The connecting homomorphism $\delta$ is zero 
since no associated prime of 
$R^1f_*\mathcal O_Y(A-N-V')$ is contained 
in $W'=f(V')$ 
(see \cite[theorem 6.3 (i)]{fujino-fundamental} and 
\cite[Theorem 5.6.2 (i)]{fujino-foundations}). 
Hence we have: 
$$
0\to f_*\mathcal O_Y(A-N-V')\to 
f_*\mathcal O_Y(A-N)\to 
f_*\mathcal O_{V'}(A-N)\to 0. 
$$
Note that $\mathcal J_{\NLC}(X, \Delta)=f_*\mathcal O_Y(A-N)$ 
by definition. 
We put $\mathcal I_{W'}=f_*\mathcal O_Y(A-N-V')$ and 
$\mathcal I_{W'_{-\infty}}=f_*\mathcal O_{V'}(A-N)$. 
We define $\Delta_{V'}$ by $(K_Y+\Delta_Y)|_{V'}=K_{V'}+
\Delta_{V'}$. 
Then 
$$
\left(W', (K_X+\Delta)|_{W'}, f\colon (V', \Delta_{V'})\to 
W'\right)
$$ 
is a quasi-log scheme. 
By construction, $\Nqlc(W', (K_X+\Delta)|_{W'})=\Nlc(X, \Delta)$ holds. 
By construction again, a subset $C\subset X$ is a qlc stratum of 
$[W', (K_X+\Delta)|_{W'}]$ if and only if 
$C$ is a log canonical center of $(X, \Delta)$ 
included in $W$. 
We note that the above construction is independent of 
the choice of $f\colon Y\to X$ by 
\cite[Proposition 6.3.1]{fujino-foundations}. 
\end{step}
\begin{step}\label{x-step1.3.2} 
In this step, we will give a natural quasi-log scheme structure 
on $[W, (K_X+\Delta)|_W]$. 
This step is essentially \cite[Lemma 4.19]{fujino}. 

\medskip 

In Step \ref{x-step1.3.1}, 
we may further assume that the union 
of all strata of $(V', \Delta_{V'})$ 
mapped to $W\cap \Nlc(X, \Delta)$ is also a 
union of some irreducible components of $V'$. 
Let $\widehat V$ be the union of the irreducible components of $V'$ 
mapped to $W$ by $f$. 
We put $\Delta_{\widehat V}$ by $(K_Y+\Delta_Y)|_{\widehat V}
=K_{\widehat V}+\Delta_{\widehat V}$. 
Then, by the proof of \cite[Lemmas 4.18 and 4.19]{fujino},  
$$
\left(W, (K_X+\Delta)|_W, f\colon (\widehat V, \Delta_{\widehat V})
\to W\right)
$$ 
is a quasi-log scheme. 
By \cite[Lemma 4.19]{fujino}, we obtain that 
$\mathcal I_{W_{-\infty}}=\mathcal I_{W'_{-\infty}}$ 
holds and that a subset $C\subset X$ is a qlc stratum of $[W', (K_X+\Delta)|_{W'}]$ 
if and only if $C$ is a qlc stratum of $[W, (K_X+\Delta)|_W]$. 
Hence $W\cap \Nlc(X, \Delta)=W_{-\infty}$ and 
$$W\cap \left(\Nlc(X, \Delta)\cup 
\bigcup _{W\not\subset W^\dag}W^\dag\right)
=\Nqklt(W, (K_X+\Delta)|_W)$$ hold 
set theoretically, where $W^\dag$ runs over 
log canonical centers of $(X, \Delta)$ which do not contain 
$W$. 
\end{step}
\begin{step}\label{x-step1.3.3}
In this step, we will give a natural quasi-log scheme structure 
on $Z$. 
This step is nothing but \cite[Theorem 1.9]{fujino}. 

\medskip 

In Step \ref{x-step1.3.2}, we may further assume that 
the union of all strata of 
$(\widehat V, \Delta_{\widehat V})$ mapped to 
$\Nqklt(W, (K_X+\Delta)|_W)$ is a 
union of some irreducible components of $\widehat V$. 
Let $V$ be the union of the irreducible components of $\widehat V$
which are dominant onto $W$. 
Then, by the proof of \cite[Theorem 1.9]{fujino}, 
$f\colon V\to W$ factors through $Z$ and 
$$
\left(Z, \nu^*(K_X+\Delta), f\colon (V, \Delta_V)\to Z\right)
$$ 
becomes a quasi-log scheme, 
where $\Delta_V$ is defined by $(K_Y+\Delta_Y)|_V
=K_V+\Delta_V$. By construction, 
we have $\nu_*\mathcal 
I_{\Nqklt(Z, \nu^*(K_X+\Delta))}=
\mathcal I_{\Nqklt(W, (K_X+\Delta)|_W)}$. 
Hence $$\Nqklt(Z, \nu^*(K_X+\Delta))=\nu^{-1}\Nqklt(W, 
(K_X+\Delta)|_W)$$ 
holds.  
\end{step}
\begin{step}\label{x-step1.3.4}
Then $f\colon (V, \Delta_V)\to Z$ is a 
basic $\mathbb R$-slc-trivial 
fibration. Hence we can apply Corollary \ref{x-cor5.2} 
and Remark \ref{x-rem5.3} 
to $f\colon (V, \Delta_V)\to Z$. 
We note that $f\colon (V, \Delta_V)\to Z$ is a basic 
$\mathbb Q$-slc-trivial 
fibration when $K_X+\Delta$ is $\mathbb Q$-Cartier. 
In that case, Theorem \ref{x-thm3.6} with 
Theorem \ref{x-thm3.7} is sufficient. 
\end{step}
\begin{step}\label{x-step1.3.5}
By \cite[Theorem 7.1]{fujino} and 
Steps \ref{x-step1.3.1}, \ref{x-step1.3.2}, and \ref{x-step1.3.3} in its proof, 
we can construct a projective birational morphism 
$p\colon Z'\to Z$ from a 
smooth quasi-projective variety $Z'$ 
satisfying (i), (ii), (iii), and (v). 
We note that we can directly 
apply Step 3 in the proof of \cite[Theorem 7.1]{fujino} 
to basic $\mathbb R$-slc-trivial fibrations 
by Corollary \ref{x-cor5.2}. 
We also note that $\mathbf B$ is a 
well-defined $\mathbb R$-b-divisor on 
$Z$ and is independent of $f\colon Y\to X$ (see 
\cite[Lemma 5.1]{fujino-hashizume} 
and \cite[Theorem 1.2]{fujino-hashizume-proc}). 
\end{step}
\begin{step}[{see \cite[Theorem 5.4]{fujino-hashizume}}]\label{x-step1.3.6}
In this final step, we will prove (iv). 
This step is essentially \cite[Theorem 5.4]{fujino-hashizume}. 
We explain it here for the reader's convenience.  

Without 
loss of generality, we may assume that $X$ is affine 
by taking a finite affine open cover of $X$. Let 
$g_{dlt}\colon X_{dlt}\to X$ be a good dlt blow-up of 
$(X, \Delta)$ such that $K_{X_{dlt}}+\Delta_{X_{dlt}}=
g^*_{dlt}(K_X+\Delta)$ (see \cite[Lemma 3.5]{fujino-hashizume}). 
We may assume that there is an irreducible component $S$ 
of $\Delta^{=1}_{X_{dlt}}$ with $g_{dlt}(S)=W$. We put 
$$D=\Delta^{\geq 1}_{X_{dlt}}-\Supp \Delta^{\geq 1}_{X_{dlt}}=
\Delta^{> 1}_{X_{dlt}}-\Supp \Delta^{>1}_{X_{dlt}}. $$ Then $-D$ 
is semi-ample over $X$ and $\Supp D=\Nlc(X_{dlt}, \Delta_{X_{dlt}})$ 
holds set theoretically (see \cite[Lemma 3.5]{fujino-hashizume}). 
By taking the contraction morphism $\varphi\colon X_{dlt}\to X_{lc}$ 
associated to $-D$ over $X$, we get a log canonical 
modification $g_{lc}\colon X_{lc}\to X$ with $K_{X_{lc}}+
\Delta_{X_{lc}}=g^*_{lc}(K_X+\Delta)$ 
(see \cite[Theorem 1.3]{fujino-hashizume}). 
$$\xymatrix{X_{dlt} \ar[dr]_-{g_{dlt}}\ar[rr]^-\varphi& & X_{lc}
\ar[dl]^-{g_{lc}}\\& X &}$$We put $D'=\varphi_*D$. 
Then $-D'$ is ample over $X$, and $$g^{-1}_{lc}\Nlc(X, \Delta)
=\Nlc(X_{lc}, \Delta_{X_{lc}})=\Supp D'$$ holds set theoretically. 
We note that $$\Nlc(X_{dlt}, \Delta_{X_{dlt}})
=\varphi^{-1}\Nlc(X_{lc}, \Delta_{X_{lc}})=g^{-1}_{dlt}\Nlc(X, \Delta)$$ 
holds set theoretically. 
Let $\tilde{W}$ be the strict transform of $W$ on $X_{lc}$. 
Let $\tilde \nu \colon \tilde Z \to \tilde W$ be the normalization. 
Then we can easily see that $$\Supp \mathbf B^{>1}_{\tilde Z}
=\widetilde \nu ^*D'=\tilde{\nu}^{-1}\left(\Nlc (X_{lc}, \Delta_{X_{lc}})
\cap\tilde W\right) = (g_{lc}\circ 
\tilde \nu)^{-1}\left(\Nlc(X, \Delta)\cap W\right)$$ 
holds set theoretically. We note that $\mathbf B^{>1}=0$ 
over $X\setminus \Nlc(X, \Delta)$ by construction. Hence 
we obtain $\nu\circ p(\mathbf B^{>1}_{Z'})
=W\cap \Nlc(X, \Delta)$ set theoretically. \end{step}
We finish the proof of Theorem \ref{x-thm1.2}. 
\end{proof}

Finally, we prove Theorem \ref{x-thm1.1} and 
Corollary \ref{x-cor1.4}. 

\begin{proof}[Proof of Theorem \ref{x-thm1.1}]
Here, we use the same notation as in Theorem \ref{x-thm1.2}. 
We put $B_Z=\mathbf B_Z$ and $M_Z=\mathbf M_Z$ in 
Theorem \ref{x-thm1.2}. 
We note that $\mathbf M_{Z'}$ is a finite $\mathbb R_{>0}$-linear 
combination of 
potentially nef Cartier divisors 
on $Z'$ with $p_*\mathbf M_{Z'}=M_Z$. 
Hence the desired statement follows from Theorem \ref{x-thm1.2}. 
\end{proof}

\begin{proof}[Proof of Corollary \ref{x-cor1.4}]
By the definition of $\mathbf B$ in Theorem \ref{x-thm1.2} 
(see the proof of Theorem \ref{x-thm1.2} and Definition \ref{x-def1.3}), 
we can easily check that 
$B_Z$ is nothing but Shokurov's different (see 
\cite[Section 14]{fujino-fundamental}) and 
$\nu^*(K_X+\Delta)=K_Z+B_Z$ holds, where $\nu\colon Z\to W$ 
is the normalization of $W$. 
In particular, we have $M_Z=0$. 
By (A) in Theorem \ref{x-thm1.1}, 
we obtain that $(X, \Delta)$ is log canonical in a neighborhood 
of $W$ if and only if $(Z, B_Z)$ is log canonical in the usual sense. 
It recovers Kawakita's inversion of adjunction 
(see \cite[Theorem]{kawakita}). 
By (B), we see that $(Z, B_Z)$ is kawamata log terminal if and only if 
$(X, \Delta)$ is log canonical in a neighborhood of $W$ and $W$ is a 
minimal log canonical center of $(X, \Delta)$ 
(see \cite[Theorem 9.1]{fujino-fundamental} and 
\cite[Theorem 6.3.11]{fujino-foundations}). 
Note that 
$(X, \Delta)$ is purely log terminal in a neighborhood of $W$ 
if and only if $(X, \Delta)$ is log canonical 
in a neighborhood of $W$ and $W$ is a 
minimal log canonical center of $(X, \Delta)$. 
\end{proof}

We close this section with the following remark 
which summarizes the construction of the $\mathbb{R}$-b-divisors 
$\mathbf B$ and $\mathbf M$ on $Z$. 

\begin{rem}\label{x-rem6.1}
Let $X$ be a normal 
variety and let $\Delta$ be an effective $\mathbb R$-divisor 
on $X$ such that 
$K_X+\Delta$ is $\mathbb R$-Cartier. 
Let $W$ be a log canonical center of $(X, \Delta)$ 
and let $\nu\colon Z\to W$ be the normalization of $W$. 

We take a log resolution 
$f\colon Y \to X$ of $(X,\Delta)$ which is a sufficiently high birational model. 
We define $\Delta_{Y}$ by $K_{Y}+\Delta_{Y}
=f^{*}(K_{X}+\Delta)$, and let $V$ be the union 
of the irreducible components of $\Delta_{Y}^{=1}$ 
which map onto $W$.  
Let $\Delta_{V}$ be an $\mathbb{R}$-divisor on 
$V$ defined by $K_{V}+\Delta_{V}=(K_{Y}+\Delta_{Y})|_{V}$, 
then we get the morphism $f\colon (V,\Delta_{V})\to Z$ which 
has the structure of a basic $\mathbb{R}$-slc-trivial fibration. 
Then $\mathbf B$ and $\mathbf M$ are defined to be the 
discriminant $\mathbb{R}$-b-divisor and 
the moduli $\mathbb{R}$-b-divisor as in 
Definition \ref{x-def3.5}. 
By construction, we can easily check that the construction 
in the proof of Theorem \ref{x-thm1.2} and the one in Definition 
\ref{x-def1.3} define the same $\mathbb{R}$-b-divisor $\mathbf B$ 
on $Z$ (see \cite[Lemma 5.1]{fujino-hashizume} 
and \cite[Theorem 1.2]{fujino-hashizume-proc}). 
Precisely speaking, when $\dim W\leq \dim X-2$, 
we consider the $\mathbb R$-line bundle 
$\mathcal L$ on $X$ associated to $K_X+\Delta$. 
We fix an $\mathbb R$-Cartier $\mathbb R$-divisor $D$ on $Z$ 
whose associated $\mathbb R$-line bundle is the pullback 
of $\mathcal L$. 
Then we put $\mathbf M=\overline D-\mathbf K-\mathbf B$, 
where $\overline D$ is the $\mathbb R$-Cartier closure of $D$ and 
$\mathbf K$ is the canonical b-divisor of $Z$. 
\end{rem}

\section{Adjunction for codimension two log 
canonical centers}\label{x-sec7}

In this final section, we first discuss basic slc-trivial fibrations 
under some extra assumption and 
then prove adjunction for codimension two log canonical centers. 

\begin{thm}\label{x-thm7.1}
Let $f\colon (X, B)\to Y$ be a basic 
$\mathbb R$-slc-trivial fibration. 
Assume that there exists a stratum $S$ of $(X, B)$ such 
that the induced morphism $S \to Y$ is generically finite and surjective. 
Then there exists a proper birational morphism 
$p\colon Y'\to Y$ from a smooth 
quasi-projective variety $Y'$ such that $\mathbf M=
\overline {\mathbf M_{Y'}}$ with $\mathbf M_{Y'}\sim _{\mathbb R} 0$. 
In particular, $\mathbf M$ is b-semi-ample.
\end{thm}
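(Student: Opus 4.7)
The strategy is to apply simple normal crossing adjunction along the stratum $S$ to obtain an induced canonical bundle formula on $S$, and to compare it with the canonical bundle formula on $Y$ so that $\mathbf{M}$ is forced to be $\mathbb{R}$-linearly trivial on a suitable birational model. By Corollary \ref{x-cor5.2} and Theorem \ref{x-thm5.1}, I would first pass to a smooth quasi-projective proper birational model $p\colon Y'\to Y$ and a compatible log resolution $X'\to X$ such that on the induced basic $\mathbb{R}$-slc-trivial fibration $f'\colon(X',B')\to Y'$ one has $\mathbf{M}=\overline{\mathbf{M}_{Y'}}$ with $\mathbf{M}_{Y'}$ a potentially nef $\mathbb{R}$-divisor on $Y'$, and such that the strict transform $S'$ of $S$ remains a stratum of $(X',B')$ with $h:=f'|_{S'}\colon S'\to Y'$ generically finite and surjective between smooth varieties. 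Further successive blow-ups of $Y'$ together with compatible modifications of $X'$ would place $S'$ in a sufficiently general position relative to the vertical fiber structure of $f'$, so that over each prime divisor $P\subset Y'$, each prime divisor $Q\subset S'$ with $h(Q)=P$ meets a single vertical component $T$ of $f'^{*}P$ transversely, and the log canonical threshold $b_{P}$ of $f'^{*}P$ on $(X',B')$ is attained on a vertical component meeting $S'$.

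Applying simple normal crossing adjunction along $S'$ then yields $(K_{X'}+B')|_{S'}=K_{S'}+B_{S'}$, and combining this with $K_{X'}+B'\sim_{\mathbb{R}}f'^{*}D'$, with $D'\sim_{\mathbb{R}}K_{Y'}+\mathbf{B}_{Y'}+\mathbf{M}_{Y'}$, and with the Riemann--Hurwitz relation $K_{S'}=h^{*}K_{Y'}+R$ produces
\[R+B_{S'}\sim_{\mathbb{R}}h^{*}(\mathbf{B}_{Y'}+\mathbf{M}_{Y'}).\]
The key technical identity is $R+B_{S'}=h^{*}\mathbf{B}_{Y'}$ as actual $\mathbb{R}$-divisors on $S'$, which under the general-position hypothesis follows from a coefficient-by-coefficient comparison at each $Q$: the SNC adjunction gives $\coeff_{Q}(B_{S'})=\coeff_{T}(B')$, Riemann--Hurwitz and transversality give $\coeff_{Q}(R)=\mult_{Q}(h^{*}P)-1=\mult_{T}(f'^{*}P)-1$, and Definition \ref{x-def3.5} together with the general-position hypothesis gives $\coeff_{P}(\mathbf{B}_{Y'})=1-b_{P}$ with $b_{P}=(1-\coeff_{T}(B'))/\mult_{T}(f'^{*}P)$ realized at this $T$. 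Substituting back then yields $h^{*}\mathbf{M}_{Y'}\sim_{\mathbb{R}}0$.

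Finally, I would take the Stein factorization $h\colon S'\xrightarrow{g}\bar{Y}\xrightarrow{\pi}Y'$, with $\pi$ finite of some degree $d$ and $g$ birational after replacing $\bar{Y}$ by a smooth resolution. Since $g^{*}(\pi^{*}\mathbf{M}_{Y'})=h^{*}\mathbf{M}_{Y'}\sim_{\mathbb{R}}0$ and $g$ is birational between smooth varieties, $\mathbb{R}$-linear equivalence descends to give $\pi^{*}\mathbf{M}_{Y'}\sim_{\mathbb{R}}0$; then $d\cdot\mathbf{M}_{Y'}=\pi_{*}\pi^{*}\mathbf{M}_{Y'}\sim_{\mathbb{R}}0$ forces $\mathbf{M}_{Y'}\sim_{\mathbb{R}}0$, as required, and in particular $\mathbf{M}$ is b-semi-ample. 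The chief obstacle is the reduction in the first paragraph: constructing a model in which the stratum $S'$ meets every log canonical threshold-realizing vertical component of $f'^{*}P$ transversely in simple divisors demands a delicate iterated blow-up procedure, and verifying the identity $R+B_{S'}=h^{*}\mathbf{B}_{Y'}$ under such a refinement requires careful tracking of how the LCT computation on $(X',B')$ transforms under the modifications and how it compares against the SNC adjunction data on $S'$.
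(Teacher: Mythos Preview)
Your overall strategy---restrict to the stratum $S$, apply adjunction and Riemann--Hurwitz, and compare with the discriminant on $Y$---is the same as the paper's. The genuine gap is the ``general position'' reduction and the resulting \emph{equality} $R+B_{S'}=h^{*}\mathbf B_{Y'}$. This equality is too strong, and the blow-up procedure you describe cannot in general arrange it. For a prime divisor $P$ on $Y'$ the quantity $b_P$ is the minimum of $(1-\coeff_T(B'))/\mult_T(f'^{*}P)$ over \emph{all} vertical components $T$ of $f'^{*}P$, not just those meeting $S'$; and even among the components meeting $S'$, different divisors $Q\subset S'$ over $P$ will sit on different $T$'s, most of which will not realize the minimum. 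Blowing up $Y'$ does not change which component computes $b_P$ relative to $S'$: the discriminant $\mathbf B$ is a fixed b-divisor, and modifying $Y'$ simply evaluates it on a higher model without forcing the lc-threshold witness to migrate toward $S'$. So the coefficient identity you need at ``this $T$'' fails, and you only get the inequality $\coeff_Q(B_{S'})+\mult_Q(h^{*}P)\,b_P\le 1$, equivalently $R+B_{S'}\le h^{*}\mathbf B_{Y'}$.

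The paper's proof uses exactly this inequality, not the equality. The point is that since $S$ is a log canonical center of $(X^\nu,B^\nu)$, the pair $(S,B_S+b_Pf_S^{*}P)$ is sub log canonical over the generic point of $P$; pushing forward through the Stein factorization $S\to T\to Y$ gives $\coeff_{Q_i}(B_T)+m_ib_P\le 1$, hence $f_T^{*}\mathbf M_Y\sim_{\mathbb Q}E$ with $-E$ effective, and so $-\mathbf M_Y$ is $\mathbb Q$-linearly equivalent to an effective divisor. To turn this inequality into $\mathbf M_Y\sim_{\mathbb Q}0$ the paper first \emph{compactifies} (via \cite[Lemma 4.12]{fujino-slc-trivial}) so that $Y$ is projective and $\mathbf M_Y$ is genuinely nef; then a nef divisor whose negative is linearly equivalent to an effective divisor must be $\mathbb Q$-trivial. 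Your argument omits both ingredients: you never compactify, and you try to force an equality that is unavailable. If you replace the equality by the inequality above and add the compactification step, your argument becomes essentially the paper's.
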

  
\begin{proof}
By Theorem \ref{x-thm5.6}, we may assume 
that $f\colon (X, B)\to Y$ is a basic $\mathbb Q$-slc-trivial fibration. 
Let $\nu\colon X^\nu\to X$ be the normalization. 
We define a $\mathbb Q$-divisor $B^\nu$ on $X^\nu$ by 
$K_{X^\nu}+B^\nu=\nu^*(K_X+B)$. 
Note that after the reduction we may find a log canonical 
center $S$ of $(X^{\nu},B^{\nu})$ such that the 
induced morphism $S \to Y$ is generically finite and surjective. 
By \cite[Lemma 4.12]{fujino-slc-trivial}, we 
may further assume that $Y$ is a complete variety. 
By replacing $Y$ with a smooth 
higher birational model and $f\colon (X, B)\to Y$ 
with the induced basic $\mathbb Q$-slc-trivial fibration, we may 
assume that $Y$ is a smooth projective variety, 
$\mathbf M=\overline{{\mathbf M}_{Y}}$, and ${\mathbf M}_{Y}$ is nef. 
The induced morphism $S \to Y$ is denoted by $f_{S}$. 
We define a $\mathbb{Q}$-divisor $B_S$ on 
$S$ by $K_{S}+B_{S}=(K_{X^{\nu}}+B^{\nu})|_{S}$. 

From now on, we will show 
that $-{\mathbf M}_{Y}$ is $\mathbb{Q}$-linearly equivalent to an 
effective $\mathbb{Q}$-divisor. 
We consider the divisor 
$\nu^{*}f^{*}{\mathbf M}_{Y}
\sim_{\mathbb{Q}}K_{X^{\nu}}+B^{\nu}-\nu^{*}f^{*}(K_{Y}+\mathbf{B}_{Y})$. 
By restricting it to $S$, we get the relation 
$f_{S}^{*}{\mathbf M}_{Y}
\sim_{\mathbb{Q}}K_{S}+B_{S}-f_{S}^{*}(K_{Y}+\mathbf{B}_{Y}).$
Let $g\colon S\to T$ be the Stein factorization of $f_{S}$. 
The finite morphism $T\to Y$ is denoted by $f_{T}$. 
We put $B_{T}=g_{*}B_{S}$. 
Then the relation 
$K_{S}+B_{S}=g^{*}(K_{T}+B_{T})$ holds because 
$K_S+B_S$ is $\mathbb Q$-linearly trivial over $Y$. 
We also have the relation
$$f_{T}^{*}{\mathbf M}_{Y}\sim_{\mathbb{Q}}
K_{T}+B_{T}-f_{T}^{*}(K_{Y}+\mathbf{B}_{Y}).$$
To show that $-{\mathbf M}_{Y}$ is 
$\mathbb{Q}$-linearly equivalent to an 
effective $\mathbb{Q}$-divisor, it is sufficient to prove 
that $-\bigl(K_{T}+B_{T}-f_{T}^{*}(K_{Y}+\mathbf{B}_{Y})\bigr)$ 
is $\mathbb{Q}$-linearly equivalent to an effective $\mathbb{Q}$-divisor. 

By the definition of the 
discriminant $\mathbb{Q}$-b-divisor 
(see Definition \ref{x-def3.5}), for every prime divisor $P$ on $Y$, we have
$\coeff_{P}(\mathbf{B}_{Y})=1-b_{P}$
where $b_{P}$ is the log canonical threshold of $(X^{\nu}, B^{\nu})$ 
with respect to $\nu^{*}f^{*}P$ over the generic point of $P$. 
Since $f_{T}$ is finite, we may write 
$f_{T}^{*}P=\sum_{Q_{i}}m_{i}Q_{i}$, 
where $Q_{i}$ runs over prime divisors 
on $T$ with $f_{T}(Q_{i})=P$ and $m_{i}$ 
is the multiplicity of $Q_{i}$ with respect to $f_{T}$. 
By the ramification formula, over a 
neighborhood of the generic point of $P$ we may write
\begin{equation*}
\begin{split}
f_{T}^{*}(K_{Y}+\mathbf{B}_{Y})=&f_{T}^{*}(K_{Y}+(1-b_{P})P)\\
=&K_{T}-\sum_{Q_{i}}(m_{i}-1)Q_{i}+(1-b_{P})\sum_{Q_{i}}m_{i}Q_{i}\\
=&K_{T}+\sum_{Q_{i}}(1-m_{i}b_{P})Q_{i}.
\end{split}
\end{equation*}
We define $E:=\sum_{Q_{i}}(\coeff_{Q_{i}}(B_{T})-(1-m_{i}b_{P}))Q_{i}$. 
Then, over a neighborhood of 
the generic point of $P$, we have
$$f_{T}^{*}{\mathbf M}_{Y}\sim_{\mathbb{Q}}
K_{T}+B_{T}-f_{T}^{*}(K_{Y}+\mathbf{B}_{Y})=
\sum_{Q_{i}}(\coeff_{Q_{i}}(B_{T})-(1-m_{i}b_{P}))Q_{i}=E.$$
On the other hand, by the 
definition of $b_{P}$ 
(see Definition \ref{x-def3.5}) and the fact that $S$ 
is a log canonical center of $(X^{\nu},B^{\nu})$, the pair 
$(S, B_{S}+b_{P}f_{S}^{*}P)$ is sub log canonical 
over the generic point of $P$. 
Since $g\colon S\to T$ is birational and $K_{S}+B_{S}=g^{*}(K_{T}+B_{T})$, 
the pair 
$(T,B_{T}+b_P f_{T}^{*}P)$ is sub log canonical 
over the generic point of $P$. 
This shows $\coeff_{Q_{i}}(B_{T})+m_{i}b_{P}\leq 1$ 
for all $Q_{i}$ such that $f_{T}(Q_{i})=P$. 
Thus, $-E$ is effective. 
Hence 
$-{\mathbf M}_{Y}$ is $\mathbb{Q}$-linearly 
equivalent to an effective $\mathbb{Q}$-divisor. 

Finally, since ${\mathbf M}_{Y}$ is nef, we see that 
${\mathbf M}_{Y}\sim_{\mathbb{Q}}0$. 
\end{proof}

We prove the b-semi-ampleness of $\mathbf M$ for basic slc-trivial 
fibrations of relative dimension one under some extra assumption. 

\begin{thm}\label{x-thm7.2}
Let $f\colon (X, B)\to Y$ be a basic 
$\mathbb R$-slc-trivial fibration with $\dim X=\dim Y+1$ 
such that the horizontal part $B^h$ of $B$ is effective. 
Then the moduli $\mathbb R$-b-divisor $\mathbf M$ is b-semi-ample. 
\end{thm}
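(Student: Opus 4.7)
My strategy is to reduce Theorem \ref{x-thm7.2} to Theorem \ref{x-thm7.1} whenever a stratum generically finite over $Y$ can be produced, and to dispose of the remaining elliptic case via the classical canonical bundle formula for elliptic fibrations. First, if $X$ has two distinct irreducible components $X_1,X_2$ with nonempty intersection, then $X_1\cap X_2$ is a stratum of $(X,B)$ which is dominant onto $Y$ by the definition of a basic slc-trivial fibration, and $\dim(X_1\cap X_2)=\dim X-1=\dim Y$, so it is generically finite over $Y$. Theorem \ref{x-thm7.1} then gives b-semi-ampleness of $\mathbf M$. We may therefore assume $X$ is irreducible, so that a general fiber $F$ of $f$ is a smooth projective curve.

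\textbf{Case split on the genus of the general fiber.} Restricting $K_X+B\sim_{\mathbb R}f^*D$ to $F$ yields $K_F+B_F\sim_{\mathbb R}0$ where $B_F:=B^h|_F$. The hypothesis $B^h\geq 0$ forces $B_F\geq 0$, and $B=B^{\leq 1}$ over the generic point gives $B_F\leq F$ in coefficients; hence $2g(F)-2+\deg B_F=0$ with $0\leq\deg B_F\leq 2$, so $g(F)\in\{0,1\}$. When $g(F)=1$ we have $B_F=0$, so $B^h$ vanishes over a dense open subset of $Y$ and $f$ is effectively an elliptic fibration with only vertical boundary; b-semi-ampleness of $\mathbf M$ in this situation is the content of Kodaira's canonical bundle formula for elliptic fibrations, as reformulated in the b-divisorial language by Fujita, Ambro, and Fujino, which I would invoke directly. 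When $g(F)=0$, so $F\simeq\mathbb P^1$ with $\deg B_F=2$, and some horizontal component $S$ of $B$ has coefficient one, then $S$ restricts to a reduced point of $F$, is a stratum of $(X,B)$ generically finite over $Y$, and Theorem \ref{x-thm7.1} finishes.

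\textbf{The delicate subcase.} The only remaining possibility is that $g(F)=0$ with all horizontal $B$-coefficients strictly less than one; since $\deg B_F=2$, the support of $B_F$ then consists of at least three points. I would handle this by first applying Theorem \ref{x-thm5.6} to write $B=\sum r_j B_j$ as a positive $\mathbb R$-convex combination of $\mathbb Q$-boundaries $B_j$ defining basic $\mathbb Q$-slc-trivial fibrations $f\colon(X,B_j)\to Y$ with moduli b-divisors $\mathbf M_j$ satisfying $\mathbf M=\sum r_j\mathbf M_j$; thus it suffices to establish b-semi-ampleness in each $\mathbb Q$-subcase. For each such $\mathbb Q$-subcase, I would construct a generically finite cover $\mu\colon\widetilde X\to X$ branched along the horizontal components of $B_j$, so that the induced fibration $\widetilde f\colon\widetilde X\to Y$ (after a log resolution as supplied by Lemma \ref{x-lem4.4}) admits the structure of a basic $\mathbb Q$-slc-trivial fibration carrying a coefficient-one horizontal component; Theorem \ref{x-thm7.1} then yields b-semi-ampleness of its moduli b-divisor, and a descent argument (using finiteness of the cover over $Y$ and averaging over the Galois group) transfers b-semi-ampleness back to $\mathbf M_j$.

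\textbf{Main obstacle.} The heart of the argument is the rational subcase with all horizontal coefficients less than one. Here one must carefully choose the ramified cover $\mu\colon\widetilde X\to X$ so that $\widetilde f$ is genuinely a basic slc-trivial fibration with the expected moduli behavior, and so that b-semi-ampleness genuinely descends through the cover. Equivalently, and perhaps more naturally, one could use the fact that the moduli b-divisor in this subcase records the variation of a weighted pointed $\mathbb P^1$ and hence is pulled back from a semi-ample divisor on a suitable Hassett-type moduli space of weighted pointed rational curves; establishing this comparison rigorously in the b-divisorial framework is the main technical hurdle.
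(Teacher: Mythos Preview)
Your case analysis and use of Theorem \ref{x-thm7.1} for the reducible case and for the ``genus-zero with a coefficient-one horizontal component'' case match the paper exactly; the paper phrases the latter as ``$(F,B|_F)$ is not kawamata log terminal'', which is equivalent since $B|_F$ is an effective boundary on a smooth curve. Where you diverge is in the remaining klt-fiber situation, which the paper does \emph{not} split into an elliptic subcase and a rational subcase: after the reduction to a basic $\mathbb Q$-slc-trivial fibration via Theorem \ref{x-thm5.6}, it simply observes that $f\colon(X,B)\to Y$ satisfies \cite[Assumption 7.11]{ps} and invokes \cite[Theorem 8.1]{ps}, which yields b-semi-ampleness of the moduli part for any lc-trivial fibration whose general fiber is klt. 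That single citation disposes of both your elliptic case (where it specializes to Kodaira's formula, as you note) and your ``delicate subcase'' simultaneously.

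Your proposed covering argument for the delicate subcase contains a genuine gap. If $\mu\colon\widetilde X\to X$ is ramified to order $e$ along a horizontal component $D$ carrying coefficient $a<1$ in $B$, then in the crepant pullback $K_{\widetilde X}+\widetilde B=\mu^*(K_X+B)$ the preimage of $D$ acquires coefficient $1-e(1-a)$, which is \emph{strictly smaller} than $a$ for every $e\ge 2$; no cover branched along $B^h$ can manufacture a coefficient-one horizontal component, so Theorem \ref{x-thm7.1} does not become applicable after such a cover. One can sometimes use a cover to reduce to the elliptic case instead---for instance the double cover of $(\mathbb P^1,\tfrac12 p_1+\cdots+\tfrac12 p_4)$---but this only works when all horizontal coefficients are of standard form $1-1/m$, which you have no right to assume after an arbitrary $\mathbb Q$-approximation. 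Your alternative suggestion via Hassett-type moduli of weighted pointed rational curves is in spirit exactly what \cite[Theorem 8.1]{ps} provides; rather than redeveloping that comparison in the b-divisorial language, you should cite it directly, as the paper does.
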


\begin{proof}
By Theorem \ref{x-thm5.6}, we may assume 
that $f\colon (X, B)\to Y$ is a basic $\mathbb Q$-slc-trivial fibration. 
By \cite[Lemma 4.12]{fujino-slc-trivial}, we 
may further assume that $Y$ is a complete variety. 
When $X$ is reducible, by the definition 
of basic slc-trivial fibrations (see Definition \ref{x-def3.2}), 
there is a stratum $S$ of $X$ such that the morphism 
$S \to Y$ is generically finite and surjective since $\dim X=\dim Y+1$. 
Thus, we can apply Theorem \ref{x-thm7.1}. 
By Theorem \ref{x-thm7.1}, the moduli $\mathbb Q$-b-divisor 
$\mathbf M$ is b-semi-ample when $X$ is reducible. 
So we may assume that $X$ is irreducible. 
Let $F$ be a general fiber of $f$. 
Then $B|_{F}\geq 0$ by the assumption $B^h\geq 0$. 
If $(F,B|_{F})$ is not kawamata log terminal, then 
there is a log canonical center $S'$ of $(X, B)$, 
that is, $S'$ is a stratum of $(X, B)$, 
such that the morphism $S' \to Y$ is generically finite and surjective. 
As in the reducible 
case, by applying Theorem \ref{x-thm7.1}, 
we see that the moduli $\mathbb Q$-b-divisor $\mathbf M$ is b-semi-ample. 
If $(F,B|_{F})$ is kawamata log terminal, 
then the morphism $f\colon (X,B) \to Y$ 
satisfies \cite[Assumption 7.11]{ps}. 
Therefore, by \cite[Theorem 8.1]{ps}, 
the moduli $\mathbb Q$-b-divisor $\mathbf M$ is b-semi-ample. 
In this way, in any case, the 
moduli $\mathbb Q$-b-divisor $\mathbf M$ is b-semi-ample. 
\end{proof}

By combining Theorem \ref{x-thm7.2} with 
the proof of Theorem \ref{x-thm1.2}, we obtain the following result, which 
generalizes Kawamata's theorem (see \cite[Theorem 1]{kawamata}).  

\begin{cor}[Adjunction and Inversion of Adjunction in codimension 
two]\label{x-cor7.3}
Under the same notation as in Theorem \ref{x-thm1.2}, 
we further assume that $\dim W=\dim X-2$. 
Then $\mathbf M$ is b-semi-ample. 
Equivalently, $M_{Z'}$ is semi-ample. 
In particular, there 
exists an effective $\mathbb{R}$-divisor $\Delta_{Z}$ on $Z$ such that
\begin{itemize}
\item
$\nu^*(K_X+\Delta)\sim _{\mathbb R}K_{Z}+\Delta_{Z}$, 
\item
$(Z,\Delta_{Z})$ is log canonical if and only if $(X,\Delta)$ is 
log canonical near $W$, and 
\item
$(Z,\Delta_{Z})$ is kawamata log terminal 
if and only if $(X, \Delta)$ is log canonical near $W$ and 
$W$ is a minimal log canonical center of $(X,\Delta)$. 
\end{itemize}
When $K_X+\Delta$ is $\mathbb Q$-Cartier, we further 
make $\Delta_Z$ an effective $\mathbb Q$-divisor on $Z$ 
such that $\nu^*(K_X+\Delta)\sim _{\mathbb Q} K_Z+\Delta_Z$ 
in the above statement. 
\end{cor}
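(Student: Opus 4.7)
The plan is to apply Theorem \ref{x-thm7.2} to the basic $\mathbb R$-slc-trivial fibration $f\colon (V,\Delta_V)\to Z$ constructed in the proof of Theorem \ref{x-thm1.2}, and then to use the resulting semi-ampleness to produce the effective divisor $\Delta_Z$. Since $\dim W=\dim X-2$, one has $\dim V=\dim X-1$ and $\dim Z=\dim X-2$, so $f$ has relative dimension one, matching the dimensional hypothesis of Theorem \ref{x-thm7.2}.

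To invoke Theorem \ref{x-thm7.2}, I would first verify that the horizontal part $(\Delta_V)^h$ is effective. Because $W$ is a log canonical center of $(X,\Delta)$, the pair $(X,\Delta)$ is log canonical over a neighborhood of the generic point $\eta_W$. By arranging the log resolution $f\colon Y\to X$ used in the construction of Theorem \ref{x-thm1.2} to factor through a dlt blow-up of $(X,\Delta)$ over a neighborhood of $\eta_W$ (available after shrinking $X$ to such a neighborhood by \cite{fujino-hashizume}), the horizontal divisors of $(V,\Delta_V)$ over $\eta_Z$ come only from the double locus of $V$ (with coefficient one) and from the strict transform of the effective boundary $\Delta$. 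This gives $(\Delta_V)^h\geq 0$, so Theorem \ref{x-thm7.2} yields the b-semi-ampleness of $\mathbf M$; in particular $\mathbf M_{Z'}$ is semi-ample, which is the first assertion.

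To produce $\Delta_Z$, I would use the semi-ampleness of $\mathbf M_{Z'}$ to choose a general effective $\mathbb R$-divisor $G_{Z'}\sim_{\mathbb R}\mathbf M_{Z'}$ on $Z'$ whose support forms a simple normal crossing divisor together with $\Supp\mathbf B_{Z'}$, shares no component with $\Supp\mathbf B_{Z'}$, contains no $p$-exceptional component, has all coefficients strictly less than one, and does not pass through the $p$-image of any $p$-exceptional divisor. Setting $G_Z=p_*G_{Z'}$ and $\Delta_Z=\mathbf B_Z+G_Z$, pushforward preserves $\mathbb R$-linear equivalence and $\mathbf M_Z=p_*\mathbf M_{Z'}$, so $G_Z\sim_{\mathbb R}\mathbf M_Z$ and $\nu^*(K_X+\Delta)\sim_{\mathbb R}K_Z+\Delta_Z$. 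When $K_X+\Delta$ is $\mathbb Q$-Cartier, $\mathbf M$ is in fact a $\mathbb Q$-b-divisor by Theorem \ref{x-thm1.2}, and $G_{Z'}$ may be taken to be a $\mathbb Q$-divisor, producing a $\mathbb Q$-linear equivalence.

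Finally, the genericity of $G_{Z'}$ forces the exceptional coefficients of $p^*G_Z$ to vanish, so $p^*G_Z=G_{Z'}$, and combined with Theorem \ref{x-thm1.2}(ii) this yields $p^*(K_Z+\Delta_Z)=K_{Z'}+\mathbf B_{Z'}+G_{Z'}$. Since $\Supp G_{Z'}$ is disjoint from $\Supp\mathbf B_{Z'}$ and all coefficients of $G_{Z'}$ are less than one, $(\mathbf B_{Z'}+G_{Z'})^{>1}=\mathbf B_{Z'}^{>1}$ and $(\mathbf B_{Z'}+G_{Z'})^{\geq 1}=\mathbf B_{Z'}^{\geq 1}$, so Theorem \ref{x-thm1.2} translates the lc/klt conditions on $(Z,\Delta_Z)$ directly into the stated conditions on $(X,\Delta)$ near $W$. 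The main obstacle will be the verification of the effectivity $(\Delta_V)^h\geq 0$, which requires careful setup of the log resolution; once that is in place, the remaining points reduce to standard moving-member and discrepancy bookkeeping.
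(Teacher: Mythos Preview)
Your overall strategy matches the paper's: apply Theorem~\ref{x-thm7.2} to the basic $\mathbb R$-slc-trivial fibration $f\colon(V,\Delta_V)\to Z$ of relative dimension one, and then use the b-semi-ampleness of $\mathbf M$ to manufacture $\Delta_Z$ from a general effective representative of $\mathbf M_{Z'}$. The only substantive difference is in how you check $(\Delta_V)^h\geq 0$. The paper cuts $X$ by $\dim W$ general hyperplanes to reduce to the case where $X$ is a surface and $W$ a closed point, and then invokes the classical fact that on the minimal resolution of a normal surface every exceptional discrepancy is nonpositive, so the crepant boundary is effective. Your route via a dlt blow-up $g\colon X'\to X$ is also valid: over a neighbourhood of $\eta_W$ the pair $(X',\Delta')$ is dlt, hence simple normal crossing along its entire non-klt locus, so the further log resolution $Y\to X'$ may be taken to be an isomorphism in a neighbourhood of the union $V_0\subset\Delta'^{=1}$ of components dominating $W$; then over $\eta_W$ the only divisors meeting $V$ come from the double locus of $V_0$ and from strict transforms of components of $\Delta$ containing $W$, all with nonnegative coefficient. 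Your argument is a bit more conceptual but leans on the dlt modifications of \cite{fujino-hashizume}, whereas the paper's hyperplane-section-plus-minimal-resolution argument is elementary and self-contained.

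There is one small gap in your final paragraph. The requirement that $G_Z=p_*G_{Z'}$ avoid $p(\Exc(p))$ entirely is in general not achievable once $\dim Z\geq 2$, since $p(\Exc(p))$ has codimension at least two in $Z$ and a general divisor will meet it; so you cannot conclude $p^*G_Z=G_{Z'}$ by genericity. Fortunately the desired identity $p^*(K_Z+\Delta_Z)=K_{Z'}+\mathbf B_{Z'}+G_{Z'}$ holds without that hypothesis: writing $G_{Z'}-\mathbf M_{Z'}=\sum_i r_i\,\ddiv_{Z'}(\varphi_i)$ and using $\ddiv_{Z'}(\varphi_i)=p^*\ddiv_Z(\varphi_i)$, one obtains $K_{Z'}+\mathbf B_{Z'}+G_{Z'}=p^*\bigl(\nu^*(K_X+\Delta)+\sum_i r_i\,\ddiv_Z(\varphi_i)\bigr)$, and pushing the bracket forward to $Z$ shows it equals $K_Z+\mathbf B_Z+G_Z=K_Z+\Delta_Z$. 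The paper simply declares this step ``easy to see'' without spelling it out.
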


\begin{proof} 
We use the same notation as in Theorem \ref{x-thm1.2}. 
Note that $W$ is a codimension two log canonical center of 
$(X, \Delta)$ by assumption. 
Let $f\colon Y\to X$ be a projective birational morphism 
from a smooth quasi-projective variety $Y$ such that 
$K_Y+\Delta_Y=f^*(K_X+\Delta)$ and 
that $\Supp \Delta_Y$ is a simple normal crossing divisor on $Y$. 
Without loss of generality, we may assume that 
$f^{-1}(W)$ is a simple normal crossing divisor on $Y$ 
such that $f^{-1}(W)=\sum _i E_i$ is the irreducible decomposition. 
We put 
$$
E=\sum _{a(E_i, X, \Delta)=-1}E_i. 
$$ 
We define $\Delta_E$ by $K_E+\Delta_E=(K_Y+\Delta_Y)|_E$. 
In this situation, we can check that $\Delta_E$ is effective over 
the generic point of $W$. 
Indeed, if $X$ is a surface then we can check this 
fact by using the minimal resolution. 
In the general case, by shrinking $X$ and cutting $X$ 
by general hyperplanes, we can reduce the problem to 
the case where $X$ is a surface. 

Let $Z$ be the normalization of $W$. 
By the same arguments as in Steps \ref{x-step1.3.1}, 
\ref{x-step1.3.2}, and \ref{x-step1.3.3} in the proof of 
Theorem \ref{x-thm1.2}, we can construct a 
basic $\mathbb{R}$-slc-trivial fibration $f\colon (V,\Delta_{V})\to Z$. 
Then $\dim V=\dim Z+1$ because $\dim V=\dim X-1$ and $W$ 
is a codimension two log canonical center of $(X, \Delta)$. 
Furthermore, by the discussion in the first paragraph, 
we see that the horizontal part $\Delta^h_{V}$ of $\Delta_V$ 
with respect to $f\colon V\to Z$ is effective. 
By the same arguments as in Steps \ref{x-step1.3.4}, \ref{x-step1.3.5}, 
and \ref{x-step1.3.6} in the proof of Theorem \ref{x-thm1.2}, 
we get a projective birational morphism 
$p\colon Z'\to Z$ from a smooth quasi-projective variety 
$Z'$ satisfying (i)--(v) of Theorem \ref{x-thm1.2}. 
Moreover, by Theorem \ref{x-thm7.2}, 
$\mathbf M$ is b-semi-ample, 
that is, $\mathbf M_{Z'}$ is semi-ample. 

Let $N\sim_{\mathbb{R}}\mathbf M_{Z'}$ be a general 
effective $\mathbb R$-divisor 
such that $N$ and $\mathbf B_{Z'}$ have 
no common components, $\Supp (N+ \mathbf{B}_{Z'})$ is a simple 
normal crossing divisor 
on $Z'$, and all the coefficients of $N$ are less than one. 
We put $\Delta_{Z}=p_{*}N+\mathbf B_{Z}$. 
Then, it is easy to see that $\Delta_{Z}$ satisfies the desired three 
conditions of Corollary \ref{x-cor7.3}. 
By the above construction, we can make $\Delta_Z$ an effective 
$\mathbb Q$-divisor such that $K_Z+\Delta_Z\sim _{\mathbb Q} 
\nu^*(K_X+\Delta)$ when $K_X+\Delta$ is $\mathbb Q$-Cartier. 
So we are done. 
\end{proof}


\end{document}